\renewcommand\subsection{\@startsection{subsection}{2}%
  \z@{-.5\linespacing\@plus-.7\linespacing}{.5\linespacing}%
  {\normalfont\scshape}}
\newtheorem{thm}{Theorem}[section]
\newtheorem{lemma}[thm]{Lemma}
\newtheorem{prop}[thm]{Proposition}
\newtheorem{cor}[thm]{Corollary}
\theoremstyle{definition}
\theoremstyle{remark}
\newtheorem{remark}[thm]{Remark}
\numberwithin{equation}{section}
\def\P{\mathbb{P}}
\def\H{\mathbb{H}}
\def\E{\mathbb{E}}
\def\R{\mathbb{R}}
\def\D{\mathbb{D}}
\DeclareMathOperator{\SLE}{SLE}
\DeclareMathOperator{\CLE}{CLE}
\DeclareMathOperator{\BCLE}{BCLE}
\DeclareMathOperator{\capacity}{cap\,}
\DeclareMathOperator{\hcap}{hcap\,}
\begin{document}

\title{The trunks of CLE(4) explorations}
\author{Matthis Lehmkuehler}
\date{July 12, 2021}
\begin{abstract}
	A natural class of conformally invariant ways for discovering the loops of a conformal loop ensemble $\CLE_4$ is given by a certain family of $\SLE_4^{\langle\mu\rangle} (-2)$ exploration processes for $\mu \in \R$. Such an exploration consists of one simple continuous path called the trunk of the exploration that discovers $\CLE_4$ loops along the way. The parameter $\mu$ appears in the Loewner chain description of the path that traces the trunk and all $\CLE_4$ loops encountered by the trunk in chronological order.  These explorations can also be interpreted in terms of level lines of a Gaussian free field. 

	It has been shown by Miller, Sheffield and Werner that the trunk of such an exploration is an $\SLE_4(\rho,-2-\rho)$ process for some (unknown) value of $\rho \in (-2, 0)$. The main result of the present paper is to establish the relation between $\mu$ and $\rho$, more specifically to show that $\mu = -\pi\cot(\pi\rho/2)$. 
	
	The crux of the paper is to show how explorations of $\CLE_4$ can be approximated by explorations of $\CLE_\kappa$ for $\kappa \uparrow 4$, which then makes it possible to use recent results by Miller, Sheffield and Werner about the trunks of $\CLE_\kappa$ explorations for $\kappa < 4$.
\end{abstract}
\let\thefootnote\relax\footnote{M.\,L. -- ETH Zürich, Rämistrasse 101, 8092 Zürich, Switzerland}

\maketitle
\tableofcontents
\thispagestyle{empty}
\newpage

\section{Introduction}
\label{sec:introduction}

\subsection{Description of our main result} 
The conformal loop ensemble $\CLE_4$ is a random collection of disjoint simple loops in a simply connected domain in the plane which is of particular interest in random geometry. It is conjectured to be related to the scaling limit of critical Potts models with $q=4$ colors and is known to be closely related to the Gaussian free field (GFF). More specifically, for a well-chosen particular value of $\lambda$, when one considers a GFF with Dirichlet boundary conditions in a simply connected subset of the plane, then the outermost level lines of level $\pm \lambda$ (i.e. interfaces between domains with $0$ and $\pm 2\lambda$ boundary conditions) in the sense of Schramm-Sheffield (see \cite{ss-contour, dubedat-coupling}) form a $\CLE_4$ (this is a result by Miller and Sheffield, see \cite{asw-btls} for a self-contained treatment). It has also been shown that $\CLE_4$ can be constructed via critical Brownian loop soup clusters (see \cite{shef-werner-cle, werner-qian-bls}, see also \cite{qian-bls-conditioned} for further loop soup results related to the CLE explorations that we will be discussing). CLEs can be constructed via variants of SLE as proposed in \cite{shef-cle}. 

If one removes the interiors of all the loops of a simple CLE, one gets a random fractal set, which can be viewed as some random conformally invariant analog of the Sierpinski carpet, and is often referred to as the CLE carpet. One of the important general features of the Conformal Loop Ensembles $\CLE_\kappa$ for $\kappa \in (8/3, 4]$  are their conformal restriction properties as introduced and studied in \cite{shef-werner-cle}. This leads very naturally to look for conformally invariant ways to explore and discover the CLE loops one after the other when starting from the boundary. The aforementioned SLE variants from \cite{shef-cle} form examples of such exploration mechanisms. The main topic of the present paper is to discuss an exact identity between two ways of constructing such exploration processes in the case of $\CLE_4$.

Before stating our main result, let us provide some background about $\CLE_4$ explorations. We will start with some rather informal heuristic considerations: 
In the case of $\CLE_4$, it turns out to be very natural (and actually in some sense necessary in order to discover the $\CLE_4$ loops $\{\gamma_i\colon i\ge 1\}$ along some continuous and conformally invariant curve in the CLE carpet) to assign to each $\CLE_4$ loop $\gamma_i$ the outcome $\sigma_i \in \{ \pm 1 \}$ of an independent fair coin toss. The collection $\{(\gamma_i, \sigma_i)\colon i \ge 1\}$ is then called a labeled $\CLE_4$ and denoted by $\CLE_4^0$ (the role of this additional randomness will be also clear in the GFF level line perspective that we will recall in a moment). Then, it follows from \cite{cle-percolations} that there exists a natural one-parameter family of Markovian and conformally invariant exploration curves that discover a labeled $\CLE_4$ with the following features: The exploration consists of a continuous simple curve $\eta$ from one given boundary point to another (for instance, from $-i$ to $i$ in the unit disk). This continuous curve is called the trunk of the exploration; it stays in the $\CLE_4$ carpet, but it hits many $\CLE_4$ loops along the way. One can then consider the continuous curve $\gamma$ that is obtained by tracing all loops encountered by this trunk in chronological order (one traces the loop $\gamma_i$ clockwise or anticlockwise when $\sigma_i=-1$ or $\sigma_i=1$, respectively). The curve $\gamma$ is then a continuous curve from the starting point to the endpoint of $\eta$ with many double points (because the trunk can hit a CLE loop many times).

One way to interpret the trunk $\eta$ goes as follows: We can consider a $\CLE_4$ loop $\gamma_i$ (and its inside) to be open or closed depending on whether $\sigma_i=1$ or $\sigma_i=-1$. Then, even though the $\CLE_4$ loops are disjoint, there exists a one-parameter family of procedures for deterministically and in a conformally invariant way agglomerating the loops into open (and closed) clusters. The trunk then traces the interface between the closed clusters touching the clockwise boundary segment from $-i$ to $i$, and the open clusters touching the counterclockwise boundary segment from $-i$ to $i$. The parameter parameterizing this one-parameter family in some sense encodes how strong the glue is that sticks the open $\CLE_4$ loops together into clusters.

Even if $\gamma$ is not a simple curve, it can nevertheless be viewed as the continuous curve that generates a Loewner chain, and it is quite easy to guess the form of the law of its driving function $\xi$ because of the Markovian nature of the exploration.

As proposed already in \cite{shef-cle}, the candidate for the law of $\gamma$ should be a $\SLE_4^{\langle \mu \rangle}(-2)$ process for $\mu \in \R$. The constant $\mu$ that enters in the definition of the driving function of $\gamma$ intuitively corresponds to a `drift choice' within the carpet. The larger $\mu$ is, the more the discovery process is `pushed to the right' when it is in the carpet.

We now move to the more rigorous construction of these exploration processes. 
The precise definition of the driving function $\xi$ goes as follows: Let $B$ be a standard Brownian motion, then (see for instance \cite {donati-yor-bessel,yor-bm-eth}) it is possible to define its principal value process by
$ P := \lim_{\epsilon\to 0}\int_0^{\,\cdot} 1(|B_t|\ge \epsilon)\, dt / {B_t}$. 
Then, if $\ell$ denotes the local time at $0$ of $B$, we can define for each given $\mu\in \R$, the processes 
$O = \mu \ell - P$ and $\xi = 2B + O$. This continuous process $\xi$ is then the candidate for the driving function of $\gamma$. 

The steps in the construction of $\gamma$ and $\eta$ then go like this: Given $\mu$, one can \emph{define} $\SLE_4^{\langle \mu \rangle}(-2)$ as a Loewner chain driven by the continuous function $\xi$,  and one then shows that this Loewner chain is almost surely generated by a random curve $\gamma$ that can indeed be (deterministically) decomposed into a trunk $\eta$ and the loops of a $\CLE_4$ that this trunk hits (see \cite{cle-percolations} and the references therein). 

A further observation is that the maximal (open) intervals of the parametrization of $\gamma$ on which $\gamma$ does not have a double point yield a family $\Gamma_-$ of clockwise oriented loops and a family $\Gamma_+$ of counterclockwise oriented loops; the trunk $\eta$ of $\gamma$ is then the unique curve which lies right of all loops in $\Gamma_-$ and left of all loops in $\Gamma_+$ (so that $\eta$ is a deterministic function of $\gamma$). In fact, $\Gamma_-\cup \Gamma_+$ are (by definition) precisely the loops discovered by the exploration path $\gamma$ in a $\CLE_4$. One can take this as the definition of the trunk $\eta$ of $\gamma$ (and this definition extends more generally to explorations of simple CLEs).

The positive (resp. negative) excursions of $B$ correspond to CLE loops in $\Gamma_+$ (resp. CLE loops in $\Gamma_-$) being traced by $\gamma$. As is apparent from the definition, the value of $\mu$ only effects the evolution of $(\xi,O)$ at times when $\xi-O$ is zero. Heuristically, the local time push in the definition of $O$ corresponds to the point from which we continue the exploration path being shifted by an infinitesimal amount along the explored hull whenever we have just completed tracing a CLE loop. This asymmetry in the law of $\gamma$ results in an asymmetry in the law of its trunk $\eta$.

One further result of \cite{cle-percolations} is that the law of the trunk (alone) is then necessarily of the following type -- which is a posteriori not so surprising in view of its properties: 
 
\begin{thm}[\cite{cle-percolations}]
	\label{thm:main-result-msw}
	For each $\mu \in \R$,  if $\gamma$ is an $\SLE_4^{\langle \mu \rangle}(-2)$, then the law of its trunk $\eta$ is that of an $\SLE_4 ( \rho, -2 -\rho)$ for some $\rho = R( \mu) \in (-2, 0)$.  Furthermore, $R$ is an increasing bijection from 
	$\R$ onto $(-2, 0)$ such that  $R ( - \mu) = -2 - R (\mu)$.
\end{thm}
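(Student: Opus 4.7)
The plan is to identify the law of the trunk $\eta$ via the Loewner driving function obtained from $\xi$ by excising the time intervals during which $\gamma$ is tracing a CLE loop rather than advancing along $\eta$. The excursions of $B$ away from $0$ correspond to these loop-tracing intervals, so the trunk is driven by the process $\xi$ conditioned on (or more precisely, time-changed by) the zero set of $B$.

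First, I would establish a domain Markov property for $\eta$. If $\tau$ is a stopping time for $\eta$, then by the construction recalled in \cite{cle-percolations} the remainder of $\gamma$ in the slit domain $\D\setminus\eta([0,\tau])$, viewed from the tip $\eta(\tau)$ towards $i$, is again an $\SLE_4^{\langle\mu\rangle}(-2)$ with the same parameter $\mu$; consequently, the remainder of $\eta$ is again a trunk. Combined with conformal invariance (inherited from that of $\CLE_4$ and of the exploration), this puts $\eta$ in the setting of Schramm's characterization of SLE-type curves, with the only distinguished boundary points being the two boundary arcs meeting at the starting point $-i$.

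Second, I would pin down the specific form of the driving function. That $\kappa=4$ is forced by the Brownian contribution $2B$ to $\xi$: the drift terms $\mu\ell-P$ in $O$ act only when $\xi-O=0$, i.e.\ during excursions along the trunk the martingale part still has quadratic variation $4\,dt$. Two force points (one on either side of $-i$) arise because no other boundary points are marked, and the constraint $\rho_1+\rho_2=-2$ comes from matching the total boundary weight with that of the $\SLE_4(-2)$-type process $\gamma$; from the GFF perspective, it reflects that the trunk is a level line of a GFF whose boundary data is constant up to a jump of $\pi$ at $-i$ that depends on $\mu$. Simpleness of $\eta$ (since it stays in the carpet) and the fact that loops of both orientations are discovered force $\rho_1,\rho_2\in(-2,0)$, so writing $\rho=\rho_1$ gives the desired form $\SLE_4(\rho,-2-\rho)$ with $\rho\in(-2,0)$, and we set $R(\mu)=\rho$.

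Finally, for the properties of $R$: monotonicity follows from a coupling/comparison argument using the drift $\mu\,d\ell$ in $O$, which systematically nudges the post-loop restart of $\gamma$ (and hence $\eta$) towards the right arc as $\mu$ increases, translating into a monotone change of $\rho$. Continuity of $R$ is inherited from the continuous dependence of $\xi$ on $\mu$, and the symmetry $R(-\mu)=-2-R(\mu)$ is immediate from reflecting $\D$ across the imaginary axis, which sends $\mu\mapsto-\mu$ and swaps the two force points, hence $\rho_1\leftrightarrow\rho_2$. Surjectivity onto $(-2,0)$ follows from continuity together with checking the limits $\mu\to\pm\infty$. The main obstacle is the second step: proving directly, from the explicit construction of $\xi$ as $2B+O$, that the driving function of $\eta$ has \emph{exactly} the $\SLE_4(\rho,-2-\rho)$ form (rather than $\SLE_4$ with some extra continuous drift, or with a different number of force points) requires a careful analysis of the joint process $(\xi,O)$ and of the hitting times of its zero set; this is where the argument in \cite{cle-percolations} does the real work, and explains why the precise value $\rho=R(\mu)$ is left undetermined by this theorem alone.
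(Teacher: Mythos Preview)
This theorem is not proved in the present paper at all: it is quoted as a result of \cite{cle-percolations}, and the paper's contribution is the companion Theorem~\ref{thm:main-result} which pins down the bijection $R$. The only relevant content here is the GFF reformulation in Section~\ref{sec:introduction}, which indicates how the proof in \cite{cle-percolations} actually goes: one \emph{starts} from the level line $\eta_c$ of a Dirichlet GFF at height $c\in(-\lambda,\lambda)$, which is known to be an $\SLE_4(c/\lambda-1,-c/\lambda-1)$, and then \emph{builds} $\gamma$ by attaching the $\CLE_4^0$ loops that $\eta_c$ meets; one then identifies the law of the resulting $\gamma$ as $\SLE_4^{\langle\mu\rangle}(-2)$ for some $\mu$. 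So the logic runs in the opposite direction to your sketch: rather than extracting $\eta$ from $\gamma$ and computing its driving function, one constructs $\gamma$ from $\eta$ and reads off the Loewner description of $\gamma$.

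Your proposed route has a genuine obstruction in the first step. The zero set of $B$ has Lebesgue measure zero, so ``time-changing $\xi$ by the zero set of $B$'' does not produce a driving function for $\eta$: the trunk accumulates no half-plane capacity in the $\gamma$-parametrization, and the Loewner chain of $\eta$ lives in a different time scale altogether (its own half-plane capacity), with the mapping-out functions of the already-traced loops entering nontrivially. There is no simple excision that converts $(\xi,O)$ into the driving triple of an $\SLE_4(\rho,-2-\rho)$. Your heuristics for $\kappa=4$, for the sum $\rho_1+\rho_2=-2$, and for the symmetry $R(-\mu)=-2-R(\mu)$ are sound, and you correctly flag that the substantive identification is deferred to \cite{cle-percolations}; but the mechanism you propose for that identification is not the one that works.
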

We write $M=R^{-1}$. This raises naturally the question of what this function $R$ (or its inverse function $M$) is. The main purpose of this paper is precisely to answer that question: 
\begin{thm} 
	\label{thm:main-result}
	The relation between $\mu \in \R$ and $\rho \in (-2, 0)$ in Theorem \ref {thm:main-result-msw} is $\mu = -\pi\cot(\pi\rho/2)$.
\end{thm}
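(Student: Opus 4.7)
The plan is to prove Theorem~\ref{thm:main-result} by the approximation scheme emphasised in the abstract: identify each $\CLE_4$ exploration as the $\kappa \uparrow 4$ limit of suitable $\CLE_\kappa$ explorations, and then invoke the explicit Miller--Sheffield--Werner formula describing the trunk parameter for $\kappa \in (8/3, 4)$. For $\kappa<4$, MSW associate to each $\CLE_\kappa$ exploration (indexed by an orientation/BCLE-type parameter $\beta$) a trunk which is an $\SLE_\kappa(\rho, \kappa-6-\rho)$ with $\rho$ an explicit trigonometric function of $\beta$. It will thus suffice to identify, for each $\mu \in \R$, the correct family $\beta_\kappa$ such that the $\CLE_\kappa$ exploration with parameter $\beta_\kappa$ converges as $\kappa \uparrow 4$ to the $\CLE_4$ exploration with parameter $\mu$, and then to pass to the limit in the MSW formula.

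The first step is to construct the approximation. Fixing $\mu$, the natural choice of $\beta_\kappa$ is dictated by matching the drift accumulated by the driving function during each traced loop, so that the combination $\mu\ell - P$ appearing in the $\CLE_4$ driving function $\xi = 2B + \mu\ell - P$ emerges correctly as the $\kappa \uparrow 4$ limit of the $\CLE_\kappa$ force-point dynamics. With this matching in hand, I would prove joint convergence of the driving functions, the Loewner hulls, the curves $\gamma_\kappa$, and the trunks $\eta_\kappa$. A natural framework is a GFF coupling (available at each $\kappa \le 4$ via level/flow lines), which transfers convergence to a single probability space; alternatively, one can work on the driving-function level by first showing tightness in the uniform-on-compacts topology and then identifying the limiting semimartingale via the Markovian characterisation of the exploration. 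Once the trunks converge, the MSW relation $\rho_\kappa = F_\kappa(\beta_\kappa)$ passes to the limit to yield $\mu = -\pi \cot(\pi\rho/2)$.

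The main obstacle will be the principal value $P$ at criticality. For $\kappa < 4$ no such cancellation is needed (the relevant drift term is a genuine Lebesgue integral), while for $\kappa = 4$ the force-point contribution to $\xi$ is only well-defined as a principal value, and one must show that the drift contributions of the many small $\CLE_\kappa$ loops encountered by the trunk coalesce, in a cancelling fashion, to produce exactly $\mu\ell - P$ in the limit. Quantifying this cancellation is likely to require a moment or martingale estimate uniform in $\kappa$, together with a careful identification of the limit of the carpet excursions of $\xi_\kappa - O_\kappa$. A secondary technical point is ensuring that the trunk is a continuous function of the full exploration path in the chosen topology, so that convergence of $\gamma_\kappa$ transports to convergence of $\eta_\kappa$; this should follow from the deterministic characterisation of $\eta$ in terms of $\Gamma_{\pm}$ recalled in the introduction. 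Once both of these are handled, the theorem follows by passing to the limit in the MSW identity.
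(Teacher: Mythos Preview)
Your overall strategy---approximate the $\CLE_4$ exploration by $\CLE_\kappa$ explorations with well-chosen $\beta_\kappa$ and pass to the limit in the MSW formula---is exactly the paper's, and your identification of the principal-value issue as the delicate point in the convergence of driving functions is spot on. But there is a factual slip and a genuine gap in the final step.

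First, the factual slip: for $\kappa\in(8/3,4)$ the trunk of an $\SLE_\kappa^\beta(\kappa-6)$ is \emph{not} an $\SLE_\kappa(\rho,\kappa-6-\rho)$ but an $\SLE_{\kappa'}(\rho',\kappa'-6-\rho')$ with $\kappa'=16/\kappa>4$; in particular it is a non-simple curve. So as $\kappa\uparrow 4$ you are approximating the simple $\SLE_4(\rho,-2-\rho)$ trunk by non-simple $\SLE_{\kappa'}$ curves with $\kappa'\downarrow 4$. This does not kill your plan, but it means the trunk convergence is not a routine ``same-$\kappa$'' stability statement.

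The real gap is your proposed mechanism for transporting convergence of $\gamma_\kappa$ to convergence of $\eta_\kappa$. You write that this ``should follow from the deterministic characterisation of $\eta$ in terms of $\Gamma_\pm$''. It does not. For $\kappa<4$ the exploration is \emph{not} a deterministic function of the labeled $\CLE_\kappa$ (the paper recalls this explicitly), so there is no map ``exploration $\mapsto$ trunk'' that is simultaneously defined for all $\kappa$ and continuous in any topology you are using. Even at $\kappa=4$, the deterministic rule ``$\eta$ lies right of all loops in $\Gamma_-$ and left of all loops in $\Gamma_+$'' is not manifestly continuous for the Carath\'eodory or driving-function topology: one would need to control how small loops and near-tangencies behave under perturbation, which is precisely the kind of BCLE-attachment stability that is hard to prove directly. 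The paper says this in so many words: the property of one curve being built from another by attaching BCLE loops does not pass easily to the limit.

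The paper's workaround, which you are missing, is to abandon convergence of the trunks as curves altogether and instead compare a single real-valued observable: the swallowing time $\tau$ of the point $1$ by the exploration $\gamma$. On one hand $\tau$ is read off directly from the driving function of $\gamma$, so its law passes to the $\kappa\uparrow 4$ limit once you have driving-function convergence (Proposition~\ref{prop:general-sle-conv} and Lemma~\ref{lem:loewner-stuff} give only a one-sided $\liminf$ bound, in fact). On the other hand, the BCLE description of $\gamma$ in terms of its trunk yields a distributional identity $\tau \stackrel{d}{=}\zeta_1 + (O^+_{\zeta_1}-O^-_{\zeta_1})^2\hcap(L)$ (Corollaries~\ref{cor:bcle-input-stopped} and~\ref{cor:bcle-critical-input-stopped}), and the right-hand side is controlled by the $\SLE_{\kappa'}(\rho',\kappa'-6-\rho')$ stability results of Section~\ref{sec:classical}. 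Matching the two limits gives only a stochastic-domination inequality, hence $M(\rho)\le -\pi\cot(\pi\rho/2)$; the reverse inequality comes from the symmetry $M(-2-\rho)=-M(\rho)$ together with a GFF-level-line monotonicity lemma (Lemma~\ref{lem:monotone-msw}). Your proposal has no analogue of this observable/monotonicity endgame, and without it the argument does not close.
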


This formula relating $\mu$ to $\rho$ looks fairly simple, but our proof will rely on a number of rather intricate ideas. Indeed, we will obtain this relation by considering the $\kappa \uparrow 4$ limit of recent results by Miller, Sheffield and Werner \cite{cle-percolations, msw-simple} that deal with conformally invariant explorations of $\CLE_\kappa$ for $\kappa \in (8/3, 4)$. Those result in turn build on the features of such conformally invariant explorations, when one equips the $\CLE_\kappa$ with an independent Liouville quantum gravity (LQG) structure on it.

In the present paper, we will however not discuss any LQG aspects. Indeed, almost all the work will consist in carefully studying what happens to these explorations and their trunks in appropriately chosen $\kappa \uparrow 4$ limits. LQG features will therefore only be used indirectly (since they are a fundamental tool in deriving the results from \cite{msw-simple} that we will use at the very end of the paper). We can note that this is one occurrence of the fact that while $\CLE_4$ is in a number of ways simpler than $\CLE_\kappa$ for $\kappa < 4$ (for instance, because of its direct interpretation in terms of GFF level lines, or because the $\CLE_4$ explorations are deterministic functions of the labeled $\CLE_4$ -- a result which fails to be true for $\CLE_\kappa$ explorations with $\kappa < 4$, see \cite{msw-sle-range}), the LQG technology is trickier to handle for $\kappa =4$ than for $\kappa < 4$, which explains this rather convoluted-looking derivation of our formulas.

\subsection{Reformulation in terms of GFF level lines}

\begin{figure}
	\centering
	\def\svgwidth{0.8\columnwidth}
\begingroup%
  \makeatletter%
  \providecommand\color[2][]{%
    \errmessage{(Inkscape) Color is used for the text in Inkscape, but the package 'color.sty' is not loaded}%
    \renewcommand\color[2][]{}%
  }%
  \providecommand\transparent[1]{%
    \errmessage{(Inkscape) Transparency is used (non-zero) for the text in Inkscape, but the package 'transparent.sty' is not loaded}%
    \renewcommand\transparent[1]{}%
  }%
  \providecommand\rotatebox[2]{#2}%
  \newcommand*\fsize{\dimexpr\f@size pt\relax}%
  \newcommand*\lineheight[1]{\fontsize{\fsize}{#1\fsize}\selectfont}%
  \ifx\svgwidth\undefined%
    \setlength{\unitlength}{307.33231661bp}%
    \ifx\svgscale\undefined%
      \relax%
    \else%
      \setlength{\unitlength}{\unitlength * \real{\svgscale}}%
    \fi%
  \else%
    \setlength{\unitlength}{\svgwidth}%
  \fi%
  \global\let\svgwidth\undefined%
  \global\let\svgscale\undefined%
  \makeatother%
  \begin{picture}(1,0.99294795)%
    \lineheight{1}%
    \setlength\tabcolsep{0pt}%
    \put(0,0){\includegraphics[width=\unitlength,page=1]{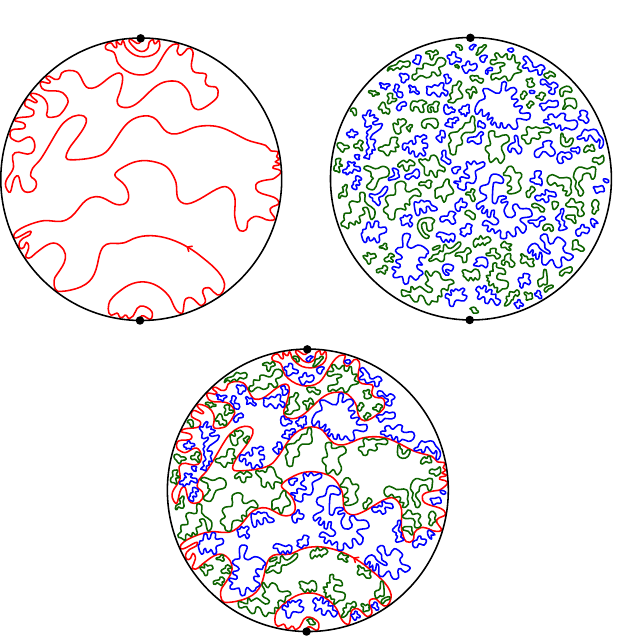}}%
    \put(0.39628908,0.53465272){\color[rgb]{0,0,0}\makebox(0,0)[lt]{\lineheight{1.25}\smash{\begin{tabular}[t]{l}$\eta$\end{tabular}}}}%
    \put(0.91092539,0.52303609){\color[rgb]{0,0,0}\makebox(0,0)[lt]{\lineheight{1.25}\smash{\begin{tabular}[t]{l}$\Gamma$\end{tabular}}}}%
    \put(0.65817621,0.03691979){\color[rgb]{0,0,0}\makebox(0,0)[lt]{\lineheight{1.25}\smash{\begin{tabular}[t]{l}$\gamma$\end{tabular}}}}%
    \put(0,0){\includegraphics[width=\unitlength,page=2]{gff-coupling.pdf}}%
  \end{picture}%
\endgroup%

	\caption{\emph{Top left.} The red curve $\eta$ is the level line of $h$ from $-i$ to $i$ at level $c\in (-\lambda,\lambda)$ (i.e. there are $c-\lambda$ boundary conditions left of $\eta$, there are $c+\lambda$ boundary conditions right of $\eta$ and there are $0$ boundary conditions on the boundary of the domain $\partial \D$). \emph{Top right.} The $\pm\lambda$ level lines $\Gamma$ of the Dirichlet GFF $h$ are shown. Blue (resp. green) loops correspond to $2\lambda$ (resp. $-2\lambda$) GFF boundary conditions on the inside of the loops. \emph{Bottom.} The curve $\eta$ does not intersect the interior of any of the loops in $\Gamma$; by removing all loops not intersecting $\eta$ we obtain the loops in the figure and chronologically attaching them to $\eta$ yields a curve $\gamma$. Note that the loops touching $\eta$ on the right (resp. left) have positive (resp. negative) boundary conditions on the inside.}
	\label{fig:gff-coupling}
\end{figure}

There is a simple way to describe both $\gamma$ and $\eta$ using the level lines of a Dirichlet GFF that we now briefly review.
This also sheds light on how to describe the conditional law of $\gamma$ given its trunk (see \cite{cle-percolations}).

Recall first that the GFF in the unit disk is a random Gaussian distribution $h_0$ on the unit disk $\D$ with covariance kernel $G(z,w) = -\log (|z-w| / |1-\bar{z}w|)$. Note that $G$ is the Dirichlet Green's function on $\D$. When $\phi\colon \partial\D\to \R$ is measurable and bounded, we call the law of $h=h_0+H_\phi$ a Gaussian free field with boundary condition $\phi$ where $H_\phi$ is the unique harmonic extension of $\phi$ to $\D$. Importantly, the field $h_0$ satisfies conformal invariance which allows us to extend these definitions to arbitrary proper simply connected domains and disjoint unions of such domains.

While $h$ is a distribution and not defined pointwise, there is a way to deterministically construct level lines from a Gaussian free field \cite{dubedat-coupling,ss-dgff,ss-contour}, see also \cite{werner-powell-gff, asw-btls}, and we will recall the relevant ideas now.

Let us consider a GFF $h$ with zero boundary conditions on $\partial \D$ and set $\lambda = \pi/2$. Moreover, fix $c\in (-\lambda,\lambda)$. Then one can deterministically associate a curve $\eta$ from $-i$ to $i$ to the field $h$ such that the conditional law of $h$ given $\eta$ is a GFF with zero boundary condition $\partial \D$, $-\lambda$ on the right side of $\eta$ and $\lambda$ on the left side of $\eta$ as illustrated in Figure \ref{fig:gff-coupling}. In fact, if $\eta'$ is another simple curve from $-i$ to $i$ coupled to $h$ with the same description of the conditional law of $h$ given $\eta'$ then $\eta = \eta'$ almost surely. It is also true that $\eta$ only depends \enquote{locally} on $h$ which is formalized by the notion of a local set but we will not require this here. The curve $\eta$ is called the level line of $h$ at level $c$ from $-i$ to $i$. As part of the proof of this statement, one shows that marginally $\eta\sim \SLE_4(c/\lambda-1,-c/\lambda-1)$.

Another construction due to Miller and Sheffield, see \cite[Section 4.3]{asw-btls}, is the coupling of $\CLE_4^0$ with $h$. It is possible to deterministically associate to $h$ a collection $\Gamma=\{(\gamma_i,\sigma_i)\colon i\ge 1\}$ of simple disjoint loops $\gamma_i$ in $\D$ together with signs $\sigma_i\in \{\pm 1\}$ such that conditionally on $\Gamma$ the field $h$ restricted to the interior of the loop $\gamma_i$ is a GFF with $2\lambda\sigma_i$ boundary conditions and that these restrictions are (conditionally) independent when $i\ge 1$ varies. Moreover, marginally $\Gamma\sim \CLE_4^0$ i.e. $\{\gamma_i\colon i\ge 1\}$ is a a $\CLE_4$ and for $j\ge 1$ the $\sigma_j\sim U(\{\pm 1\})$ are i.i.d. and independent of the collection of loops $\{\gamma_i\colon i\ge 1\}$.

As explained in \cite[Proposition 5.3]{cle-percolations}, $\eta$ does not intersect the interior of any loop in $\Gamma$. We can thus define a continuous curve $\gamma$ as follows. One traces $\eta$ and whenever $\eta$ intersects a loop $(\gamma',1)\in \Gamma$ one follows $\gamma'$ in clockwise direction and similarly, when $\eta$ intersects a loop $(\gamma',-1)\in \Gamma$ one follows this loop $\gamma'$ in counterclockwise direction i.e.\ we attach the loops in $\Gamma$ that intersect $\eta$ in chronological order to $\eta$ to obtain a curve $\gamma$ (the fact that one obtains a continuous curve is essentially a consequence of the local finiteness of CLE). 
The level line description for the GFF in the complement of $\eta$ gives actually a simple direct description of the conditional law of $\gamma$ given $\eta$ (this is explained in \cite{cle-percolations, asw-btls}). To emphasize the dependence on $c\in (-\lambda,\lambda)$ below, let us write $\eta_c$ and $\gamma_c$.

Theorem \ref{thm:main-result-msw} from \cite{cle-percolations} can then be reformulated as follows: \emph{We have
 $\gamma_c\sim \SLE_4^{\langle \mu \rangle}(-2)$ for some $\mu =M'(c)\in \R$ where $\rho=c/\lambda-1$. Furthermore, the mapping $M'$ is an increasing bijection from $(-\lambda, \lambda)$ to $\R$ such that $M'(-c) = - M'(c)$. In fact $M'(c)=M(c/\lambda -1)$.}  
Our Theorem \ref{thm:main-result} then complements this result by identifying this bijection as $M' (c) = \pi \tan (c)$. 

It is worth emphasizing that while the coupling of the labeled $\CLE_4$ and its exploration paths with a GFF will almost never be used explicitly in the present paper,  it is nevertheless instrumental for a number of results that we build upon, for instance in the derivation of Theorem \ref {thm:main-result-msw}. 

\subsection{Outline and comments}

In order to help the reader understand how the proof will work, let us mention the following simple observation about stable Lévy processes. If $S'$ is a stable Lévy process of exponent $1$ then necessarily, $S'=(aC_t+\mu t\colon t\ge 0)$ where $C$ is a Cauchy process, and $a \ge 0$ and $\mu\in \R$ are constants. On the other hand, a stable Lévy process of exponent $\alpha \in (1,2)$ is of the form $S=u_+S^+ - u_-S^-$ where $u_\pm\ge 0$ and $S^\pm$ are spectrally positive (compensated) Lévy processes of exponent $\alpha$. When $\alpha\downarrow 1$ and one considers suitable values of $u_\pm$ with $u_+/u_-$ tending to 1, then $S$ tends to $S'$ (in distribution). In other words, the deterministic drift asymmetry in $S'$ can be viewed as the limit of vanishing asymmetry in the jumps of $S$. 

The exploration paths in a $\CLE_\kappa$ for $\kappa\in (8/3,4]$ have a Loewner chain description that can be constructed from a Bessel process of dimension $\delta = 3 -8/\kappa$ and via the excursion decomposition of this Bessel process we are naturally led to considering stable Lévy processes of exponent $\alpha = 2-\delta=8/\kappa-1$ (this Lévy process will correspond to a certain compensated integral process, reparameterized by Bessel local time). The above result on stable Lévy process will then play a key role in approximating the $\CLE_4$ exploration paths by $\CLE_\kappa$ exploration paths; indeed, the parameter $\mu$ in the definition of $S$ is precisely the one appearing in the Loewner chain description of $\CLE_4$ exploration paths and the asymmetry parameter $u_+/u_-$ appears in the Loewner chain description of $\CLE_\kappa$ explorations for $\kappa<4$.

The core of the present paper will be devoted to the continuity result for the laws of $\CLE_\kappa$ exploration paths and their trunks when $\kappa$ varies. Section \ref{sec:classical} will be devoted to the case of the trunks and Section \ref {sec:gensle} will be devoted to the exploration paths themselves. The final short Section \ref{sec:mainresult} will put the pieces together and conclude the proof of Theorem \ref{thm:main-result}. 

It is worth stressing here that one key result in the present paper will be Theorem \ref{thm:bessel-int-conv} in Section \ref{sec:gensle}, which in essence states continuity in law when $\delta \uparrow 1$ of the joint law of a Bessel process of dimension $\delta$ and a compensated integral process associated to it (relevant definitions and results on Bessel processes appear in \cite{donati-yor-bessel,revuz-yor,yor-bm-eth}); this is a statement about Bessel processes only that turns out (as is often the case in the context of SLE curves) to have implications for two-dimensional random geometry.

Let us conclude this section by mentioning that the convergence statements of curves that we will derive and use in this paper are all in the sense of Carathéodory convergence (i.e. uniform convergence on compacts of the mapping out functions). Recently, analytic tools have been successfully deployed to prove stronger convergence results for (usual) $\SLE_\kappa$ processes. In \cite{friz-sle-continuity-grr} (strengthening the main result in \cite{sle-cont-viklund}) it was for instance established that if $B$ is a standard Brownian motion then a.s.\ for all $\kappa\in [0,8/3)$ the process $\sqrt{\kappa}B$ generates a continuous curve $\gamma_\kappa$ (parameterized by half-plane capacity) and that $(t,\kappa)\mapsto \gamma_\kappa(t)$ is jointly Hölder continuous on all compacts contained in $[0,\infty)\times (0,8/3)$ where the Hölder exponent and constant may depend on the compact subset. The sense of convergence required to complete the proof of our main theorem here is however much weaker than the stability results in the aforementioned papers. It seems however very tricky to generalize the arguments of \cite{friz-sle-continuity-grr,sle-cont-viklund} all the way to $\kappa = 4$ and even more difficult to see how to possibly handle with these techniques the generalized $\SLE_\kappa$ curves with force points (as introduced in Section \ref{subsec:sle-force-points} and Section \ref{sec:gensle}), since these generalized processes are quite delicate -- note for instance that our purpose is precisely to control the randomness that is `created' when the Bessel processes are equal to $0$.

\medspace

{\bf Acknowledgments.} The author was supported by grant 175505 of the Swiss National Science Foundation and is part of SwissMAP. He would like to thank Wendelin Werner for many insightful inputs throughout this project and the anonymous referee for helpful comments.

\section{SLEs with force points}
\label{sec:classical}

\subsection{Overview}

Let us first provide a brief heuristic overview of the strategy of our proof of  Theorem \ref{thm:main-result}, in order to provide some motivation for the coming sections. We will make extensive use of results from the two papers  \cite{cle-percolations} and \cite{msw-simple} about explorations of $\CLE_\kappa$ for $\kappa \in (8/3, 4)$.
For such values of $\kappa$ the explorations are parameterized by $\beta \in [-1,1]$, so that each $\CLE_\kappa$ loop is now oriented clockwise or anticlockwise independently, with respective probabilities $(1- \beta )/ 2$ and $(1+ \beta)/2$. The exploration curve $\gamma$ then traces the CLE loops that it encounters in its orientation. Just as the $\CLE_4$ exploration, the trunk $\eta$ of $\gamma$ will pass to the left (resp. the right) of the loops $\gamma_i$ with $\sigma_i = -1$ (resp. $\sigma_i = +1$) that it encounters. Let us insist on the fact that when $\beta \not= 0$ and $\kappa \in (8/3, 4)$, the labels of the $\CLE_\kappa$ loops are $\pm 1$ with probabilities different from $1/2$ (while for $\CLE_4$ explorations, one has to stick to the $\CLE_4^0$ labeled $\CLE_4$). 

In these papers it is shown for $\kappa\in (8/3,4)$ and $\beta\in [-1,1]$ that the trunk $\eta$ of such an 
$\SLE_\kappa^\beta(\kappa-6)$ exploration $\gamma$ is a $\SLE_{\kappa'}(\rho'(\kappa,\beta),\kappa'-6-\rho'(\kappa,\beta))$ curve for $\kappa' = 16/\kappa$ (so it is not a simple curve anymore), and the conditional law of $\gamma$ given its trunk is described in terms of what are called Boundary Conformal Loop Ensembles (BCLE).

Furthermore, using rather elaborate Liouville Quantum Gravity considerations, it is shown that the relation between $\rho'(\kappa,\beta)\in [\kappa'-6,0]$ and $\beta$ is 
\begin{align*}
	\tan(\pi\rho'(\kappa,\beta)/2)= \frac{\sin(\pi\kappa'/2)}{1+\cos(\pi\kappa'/2)-2/(1-\beta)}\;.
\end{align*}
Similarly, the trunk of $\gamma\sim \SLE_4^{\langle\mu\rangle}(-2)$ is a $\SLE_4(\rho'(4,\mu),-2-\rho'(4,\mu))$ curve for some value $\rho'(4,\mu)\in (-2,0)$ and the conditional law of $\gamma$ given its trunk can be described as well by attaching BCLE loops (as mentioned in the introduction). The main outcomes of the coming sections (i.e., respectively of Section \ref {sec:gensle} and Section \ref {sec:classical}) will be that 
\begin{align*}
	\SLE^{\mu(4/\kappa-1)}_\kappa(\kappa-6)&\to\SLE_4^{\langle \mu\rangle}(-2) \\
	\SLE_{\kappa'}(\rho'(\kappa,(4/\kappa-1) \mu),\kappa'-6-\rho'(\kappa,(4/\kappa-1) \mu)) &\to \SLE_4(\rho''(4,\mu),-2-\rho''(4,\mu))
\end{align*}
as $\kappa\uparrow 4$ in distribution with respect to uniform convergence on compacts of the driving functions where $\rho''(4,\mu)=\lim_{\kappa\uparrow 4} \rho'(\kappa,\mu(4/\kappa-1))$ is
\begin{align*}
	\rho''(4,\mu)= 2/\pi\cdot \arctan(\mu/\pi)-1
\end{align*}
and where $\arctan(\cdot)$ takes values in $(-\pi/2,\pi/2)$. 

We would like to deduce that $\rho'(4,\mu)=\rho''(4,\mu)$ thus completing the proof of the main theorem. The main difficulty is that the property of one curve being constructible from another curve by attaching random collections of BCLE loops does not pass easily to the limit. We will circumvent this by using one particular observable (the swallowing time of a point on the real axis by the CLE exploration path) the law of which characterizes the value $\rho'(4,\mu)$. This will be the content of Section \ref{sec:mainresult}.

\begin{remark}
	An alternative strategy to prove our main results would have been to build on the results from \cite{msw-non-simple} on asymmetric explorations of $\CLE_\kappa$'s for $\kappa > 4$, i.e., to take the limit $\kappa \downarrow 4$ instead of $\kappa \uparrow 4$. Yet another option would have been to try to directly derive results about $\CLE_\kappa$ decorations on LQG surfaces with parameter $\gamma=\sqrt{\kappa}$ for $\kappa =4$ and $\gamma = 2$, building on the LQG theory \cite{holden-powell-critical-lqg} in that case. But it seems that given the current literature, the approach chosen in the present paper is the shortest one to derive our results and we believe that understanding how to approximate $\CLE_4$ explorations by other $\CLE_\kappa$ explorations is also interesting on its own right. 
\end{remark}

\begin{remark} 
	The CLE exploration processes, when drawn on an appropriate LQG surface conjecturally correspond to the scaling limits of the discrete peeling processes on planar maps, and the formulas relating $\beta$ and $\rho$ also appear in the context of the study of the scaling limits of planar maps -- see the discussion and references in \cite{msw-simple, msw-non-simple}. The case $\kappa =4$ studied in the present paper (if coupled with the appropriate LQG) is similarly the continuum counterpart of the peeling processes studied in \cite{budd-curien-marzouk}. 
\end{remark}

\begin{remark}
	The approximation of the GFF by cable-graph GFFs as initiated by Lupu in \cite{lupu-lcriff} has been very useful in order to understand couplings of the GFF with $\CLE_4$ (see \cite{werner-wu-explorations,aru-lupu-sepulveda}) but it does not seem to be so amenable to the study of the question that we investigate in the present paper. 
\end{remark}

The main goal of this section wil now be to derive results about the continuity with respect to $(\kappa, \rho_\pm)$ of the laws of $\SLE_\kappa (\rho_-,\rho_+)$ processes. 

\subsection{Some results on Loewner chains}
\label{subsec:loewner-chains}

Let us very briefly recall the main construction of (chordal) Loewner chains (we refer the reader to \cite{werner-notes,lawler-book} for in depth introductions to the subject). Let $K\subset \overline{\H}$ be a chordal hull i.e. $K$ is compact, $\H\setminus K$ is simply connected and $K$ is the closure of $\H\cap K$. Then the capacity and half-plane capacity of $K$ are defined by
\begin{align*}
	\capacity(K) &= \lim_{y\to \infty} y\,\P_{iy}(\beta_{\tau_K} \in K)\;,\\
	\hcap(K) &= \lim_{y\to \infty} y\,\E_{iy}(\Im \beta_{\tau_K})\;.
\end{align*}
where $\beta$ is a complex Brownian motion and $\tau_K=\inf\{t\ge 0\colon \beta_t \notin \H\setminus K\}$. Then there exists a unique continuous bijection $g_K\colon \overline{\H}\setminus K\to \overline{\H}\setminus [a_-(K),a_+(K)]$ for some $a_-(K)\le a_+(K)$ such that $g_K$ restricted to $\H\setminus K$ is the unique conformal transformation from $\H\setminus K$ to $\H$ with $g_K(z)=z+\alpha_K/z+O(|z|^{-2}\,)$ as $|z|\to \infty$ and then necessarily $\alpha_K = \hcap(K)$. Moreover, $g_K$ extends holomorphically to a sufficiently small neighborhood of every $x\in \R\setminus K$. Finally, we remark that $\capacity(K)=(a_+(K)-a_-(K))/\pi$.

The main insight of Loewner theory is the encoding of families of chordal hulls by a continuous real-valued function: Whenever $L\in [0,\infty]$ and $\xi\colon [0,L)\to \R$ is a continuous function, we will associate to it its maximal solution to the Loewner equation
\begin{align*}
	\begin{cases}
		\dot{g}_t(z) = {2}/ ({g_t(z)-\xi_t}) &\colon t<\zeta_z\\
		g_0(z)=z
	\end{cases}
\end{align*}
for $z\in \overline{\H}\setminus\{\xi_0\}$ and define $K_t = \{z\in \overline{\H}\colon \zeta_z\le t\}$ for $t<L$. It turns out that $K_t$ is a chordal hull with $\hcap(K_t)=2t$ and $g_t=g_{K_t}$ for all $t> 0$. We say that the Loewner chain $(K_t)$ is generated by a continuous curve $\gamma\colon [0,L)\to \overline{\H}$ if $\H\setminus K_t$ is the infinite connected component of $\H\setminus \gamma([0,t])$ for all $t\in [0,L)$. In that case in fact $g_t(z)\to \xi_t$ when $\H\setminus K_t \ni z\to \gamma_t$.

Throughout, we will use the following notation: Whenever $\xi^*$ is some driving function where $*$ is some superscript, $g^*$, $\zeta^*$, $K^*$ and $\gamma^*$ will denote the corresponding objects in the chordal Loewner theory introduced above.

The only convergence result that will be needed is the following standard result in the theory of Loewner chains. Since the case with a boundary point is frequently not mentioned in the literature, we provide a self-contained proof here for the reader's convenience.

\begin{lemma}
	\label{lem:loewner-stuff}
	Let $\xi^n,\xi\colon [0,\infty)\to \R$ be continuous, start at $0$ and suppose that $\xi^n\to \xi$ uniformly on compacts as $n\to \infty$. Also consider $x\in \R\setminus \{0\}$. Then $\zeta_x \le \liminf_{n\to \infty} \zeta_x^n$ and $g^n(x)\to g(x)$ and $(g^n)'(x)\to g'(x)$ as $n\to \infty$ uniformly on compacts contained in $[0,\zeta_x)$.
\end{lemma}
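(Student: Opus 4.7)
The plan is a standard Gronwall-type stability argument for the Loewner ODE at a boundary point. Without loss of generality assume $x>0$; the case $x<0$ is symmetric. By continuity of $s\mapsto g_s(x)-\xi_s$ on $[0,\zeta_x)$ and its strict positivity there (which is equivalent to the definition of $\zeta_x$), on any compact $[0,T]\subset [0,\zeta_x)$ there exists $\delta>0$ with $g_s(x)-\xi_s\ge 2\delta$ for all $s\in [0,T]$. In what follows I fix such $T$ and $\delta$ and restrict attention to those $n$ with $\sup_{s\in [0,T]}|\xi^n_s-\xi_s|\le \delta/4$.

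The core step is a bootstrap. Let $\sigma_n$ denote the supremum of those $s\le T\wedge \zeta^n_x$ for which $|g^n_t(x)-g_t(x)|\le \delta/2$ on $[0,s]$. Since both $g^n$ and $g$ start at $x$, one has $\sigma_n>0$. On $[0,\sigma_n]$ the reverse triangle inequality yields $g^n_s(x)-\xi^n_s\ge 2\delta-\delta/2-\delta/4\ge \delta$, which in particular shows $\zeta^n_x>\sigma_n$. Writing
\[
g^n_t(x)-g_t(x)=\int_0^t\frac{2\bigl(g_s(x)-g^n_s(x)+\xi^n_s-\xi_s\bigr)}{\bigl(g^n_s(x)-\xi^n_s\bigr)\bigl(g_s(x)-\xi_s\bigr)}\,ds
\]
and using the uniform lower bound $2\delta^2$ on the product in the denominator on $[0,\sigma_n]$, Gronwall's lemma delivers
\[
|g^n_t(x)-g_t(x)|\le C_T\sup_{s\in [0,T]}|\xi^n_s-\xi_s|\quad\text{for all } t\in [0,\sigma_n],
\]
with $C_T$ depending only on $T$ and $\delta$. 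For $n$ large enough, the right-hand side is strictly less than $\delta/2$; by continuity of $g^n_\cdot(x)-g_\cdot(x)$ on $[0,T\wedge \zeta^n_x]$ and the fact that $\zeta^n_x>\sigma_n$, this excludes $\sigma_n<T$ by maximality, so $\sigma_n=T$. Hence $\zeta^n_x>T$ and $g^n(x)\to g(x)$ uniformly on $[0,T]$. Letting $T\uparrow \zeta_x$ gives $\zeta_x\le \liminf_n\zeta^n_x$.

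For the derivative, I would differentiate the Loewner ODE in $x$ to obtain the closed-form expression
\[
(g^n_t)'(x)=\exp\left(-\int_0^t\frac{2\,ds}{(g^n_s(x)-\xi^n_s)^2}\right),
\]
and analogously for $g_t'(x)$. The previous step shows that $g^n_s(x)-\xi^n_s$ converges to $g_s(x)-\xi_s$ uniformly on $[0,T]$ with both quantities bounded below by $\delta$, so the integrands converge uniformly on $[0,T]$, whence the desired uniform convergence of $(g^n_\cdot)'(x)$ to $g'_\cdot(x)$.

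The only subtle point is the tight coupling between the two estimates one wants: the Loewner ODE blows up exactly when $g^n_s(x)-\xi^n_s\to 0$, so standard ODE stability cannot be invoked as a black box. The bootstrap through $\sigma_n$ is designed precisely to decouple the a priori control of $|g^n-g|$ from the lower bound on the denominator, after which the rest is routine.
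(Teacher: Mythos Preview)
Your proof is correct and follows essentially the same Gr\"onwall-with-stopping-time strategy as the paper, including the identical closed-form argument for the derivative. The one minor organizational difference is that your bootstrap directly yields $\zeta^n_x>T$, whereas the paper first restricts to $T<\zeta_x\wedge\liminf_n\zeta^n_x$ and then establishes $\liminf_n\zeta^n_x\ge\zeta_x$ via a separate Fatou-type contradiction argument.
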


\begin{proof}
	Without loss of generality, let us assume that $x>0$. Let $f(x)=g(x)-\xi$ and $f^n(x)=g^n(x)-\xi^n$ and fix $0\le T< \zeta_x\wedge \liminf_{n\to \infty} \zeta_x^n$. As a first step, we will prove uniform convergence of $f^n(x)$ to $f(x)$ as $n\to \infty$ on $[0,T]$. By Loewner's equation, for $t\le T$ and $n$ sufficiently large
	\begin{align*}
		|f^n_t(x)-f_t(x)| &\le |\xi^n_t - \xi_t| + \int_0^t \left| \frac{2}{f^n_s(x)} - \frac{2}{f_s(x)} \right| \,ds \\
		&\le \sup_{[0,T]} |\xi^n - \xi| + \frac{2}{\inf_{[0,t]}|f^n(x)|\cdot \inf_{[0,t]}|f(x)|} \int_0^t |f^n_s(x)-f_s(x)|\,ds\;.
	\end{align*}
	Let $\delta_T = \inf_{[0,T]} |f(x)|>0$, consider $\epsilon \in (0,\delta_T/2)$ and define $\tau_n = \inf\{t\le T\colon |f^n_t(x)-f_t(x)| > \epsilon \}$ with the usual convention that $\tau_n=\infty$ if we are taking the infimum of an empty set. If $\tau_n < \infty$, then we get
	\begin{align*}
		|f^n_t(x)-f_t(x)| \le \sup_{[0,T]} |\xi^n - \xi| + (2/\delta_T)^2 \int_0^t |f^n_s(x)-f_s(x)|\,ds \quad\text{for $t\le \tau_n$}
	\end{align*}
	and by Grönwall's inequality therefore
	\begin{align*}
		|f^n_t(x)-f_t(x)| \le \sup_{[0,T]} |\xi^n - \xi| \cdot e^{(2/\delta_T)^2 T}\quad\text{for $t\le \tau_n$}\;.
	\end{align*}
	When $t=\tau_n$, the left-hand side equals $\epsilon$ so that we get a contradiction when $n$ is sufficiently large. Thus $\tau_n =\infty$ for $n$ sufficiently large as required.

	Let us now suppose for a contradiction that $\liminf_{n\to \infty} \zeta^n_x < \zeta_x$ and take $(n_k)$ with $\tau = \lim_{k\to \infty} \zeta^{n_k}_x < \zeta_x$. Then
	\begin{align*}
		g_\tau(x) &= \int_0^\tau \frac{2}{f_s(x)}\,ds = \int_0^\infty \liminf_{k\to \infty}\frac{2\cdot 1(s<\zeta^{n_k}_x)}{f^{n_k}_s(x)}\,ds \le \liminf_{k\to \infty} \int_0^{\zeta^{n_k}_x} \frac{2}{f^{n_k}_s(x)}\,ds\\
		&= \liminf_{k\to \infty} \xi^{n_k}_{\zeta^{n_k}_x} = \xi_\tau
	\end{align*}
	which is clearly a contradiction since we know that $f_\tau(x)> 0$ (as $\tau<\zeta_x$). To obtain the convergence of the derivatives, note that for $t<\zeta_x$,
	\begin{align*}
		\frac{d}{dt}\log g_t'(x) = \frac{\dot{g}_t'(x)}{g_t'(x)} = \frac{-2}{(g_t(x)-\xi_t)^2} \quad\text{and hence}\quad g_t'(x)=\exp ( {-2\int_0^t (g_s(x)-\xi_s)^{-2}\,ds}) 
	\end{align*}
	and the analogous statement holds for $(g^n)'(x)$. The result then immediately follows.
\end{proof}

We will also need the following fact that describes how the half-plane capacity changes when one changes the target point (this type of result was already used in the proof of target invariance of $\SLE_6$ in \cite{lsw-bm-exponents1}). Again we provide a quick self-contained proof.

\begin{lemma}
	\label{lem:change-hcap}
	Let $\xi\colon [0,\infty)\to \R$ with $\xi_0=0$ be continuous such that $\zeta_{-y} = \infty$ where $-y<0$. Let $\psi\colon \H\to \H$ be the Möbius transformation mapping $(0,\infty,-y)$ to $(0,x,\infty)$ with $x>0$. Let $\tilde{K}_t$ be the chordal hull associated to $\psi(K_t)$ for $t\ge 0$. Then
	\begin{align*}
		\hcap(\tilde{K}_t) = 2\int_0^t \left(\frac{xy\cdot g_t'(-y)}{(g_s(-y)-\xi_s)^2}\right)^2 \,ds\quad\text{for all $t\ge 0$}\;.
	\end{align*}
\end{lemma}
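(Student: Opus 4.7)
My plan is to realise the composition $\tilde g_t \circ \psi \circ g_t^{-1}$ as an explicit Möbius transformation of $\H$ and extract the capacity derivative from it.

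First, since $\psi$ is a Möbius bijection of $\H$ and $-y \notin K_t$ (by $\zeta_{-y} = \infty$), the restriction of $\psi$ to $\H \setminus K_t$ is a conformal bijection onto $\H \setminus \tilde K_t$; consequently $\Psi_t := \tilde g_t \circ \psi \circ g_t^{-1}\colon \H \to \H$ is itself a Möbius transformation. As $\tilde g_t(\psi(-y)) = \tilde g_t(\infty) = \infty$, the map $\Psi_t$ sends $g_t(-y)$ to $\infty$, and hence has the form $\Psi_t(u) = \alpha_t - \beta_t/(u - g_t(-y))$ for some $\alpha_t \in \R$ and $\beta_t > 0$. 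An explicit inversion gives $\psi^{-1}(w) = wy/(x-w) = -y - xy/w + O(1/w^2)$ at $\infty$, so Taylor's formula at $-y$ yields $g_t(\psi^{-1}(w)) - g_t(-y) = -xy \cdot g_t'(-y)/w + O(1/w^2)$. Substituting into $\Psi_t$ and matching the coefficient of $w$ with the normalisation $\tilde g_t(w) = w + a(t)/w + O(1/w^2)$, where $a(t) := \hcap(\tilde K_t)$, forces $\beta_t = xy \cdot g_t'(-y)$.

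Next, I would compute $a'(t)$. By additivity of half-plane capacity, $a(t+dt) - a(t) = \hcap(\tilde g_t(\tilde K_{t+dt} \setminus \tilde K_t)) + o(dt) = \hcap(\Psi_t(g_t(K_{t+dt} \setminus K_t))) + o(dt)$. The inner hull $g_t(K_{t+dt} \setminus K_t)$ is a small chordal hull localised near $\xi_t$ with $\hcap = 2\,dt$, and since $\Psi_t$ is regular at $\xi_t$ (because $\xi_t \neq g_t(-y)$), the standard local scaling of hcap under a boundary-regular Möbius map gives $\hcap(\Psi_t(S)) = \Psi_t'(\xi_t)^2 \hcap(S) + o(\hcap(S))$. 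With $\Psi_t'(\xi_t) = \beta_t/(\xi_t - g_t(-y))^2$, this produces
\begin{align*}
  a'(t) = 2 \Psi_t'(\xi_t)^2 = \frac{2\beta_t^2}{(g_t(-y) - \xi_t)^4} = 2\left(\frac{xy \cdot g_t'(-y)}{(g_t(-y) - \xi_t)^2}\right)^2\;.
\end{align*}
Since $a(0) = \hcap(\emptyset) = 0$, integrating from $0$ to $t$ yields the stated formula.

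The main obstacle is the local scaling of hcap under a Möbius map $\Psi_t\colon \H \to \H$ that is regular at the point $\xi_t$ where the small hull is concentrated. Conceptually this is clear (translate $\Psi_t(\xi_t)$ to the origin and linearise $\Psi_t$ at $\xi_t$; the problem then reduces to the exact scaling $\hcap(aK + b) = a^2 \hcap(K)$ for affine maps plus a harmless quadratic correction), but the rigorous bookkeeping takes some work. An alternative, purely algebraic route avoids this lemma: differentiate the identity $\tilde g_t = \Psi_t \circ g_t \circ \psi^{-1}$ directly in $t$ and compare with the rescaled Loewner equation $\dot{\tilde g}_t(w) = a'(t)/(\tilde g_t(w) - \tilde \xi_t)$, where the driving function $\tilde \xi_t = \Psi_t(\xi_t)$ is found by tracking the tip; matching coefficients of $u^0, u^1, u^2$ in the resulting rational identity in $u := g_t(\psi^{-1}(w))$ and using $\dot\beta_t = -2\beta_t/(g_t(-y)-\xi_t)^2$ (which follows from $\dot g_t'(-y) = -2 g_t'(-y)/(g_t(-y)-\xi_t)^2$) recovers the same formula for $a'(t)$.
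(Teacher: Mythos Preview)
Your approach is essentially identical to the paper's: both introduce the M\"obius transformation $\Psi_t=\tilde g_t\circ\psi\circ g_t^{-1}$ (called $h_t$ in the paper), identify its pole at $g_t(-y)$, compute the residue $\beta_t=xy\cdot g_t'(-y)$ by an asymptotic expansion near $-y$, and then invoke the time-change formula $\hcap(\tilde K_t)=2\int_0^t \Psi_s'(\xi_s)^2\,ds$. The only difference is that the paper simply cites this last identity from \cite[Section~4.1]{werner-notes} rather than re-deriving it via the local scaling / algebraic matching you outline.
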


\begin{proof}
	We have $\psi(z)=xz/(y+z)$ for $z\in \H$. Let $\tilde{g}_t=g_{\tilde{K}_t}$ so that $h_t = \tilde{g}_t\circ \psi\circ g_t^{-1}\colon \H\to \H$ is a conformal transformation and hence a Möbius transformation. Since $g_t(z)-z\to 0$ as $|z|\to \infty$ and $\tilde{g}_t(w)-w\to 0$ as $|w|\to \infty$ it follows that
	\begin{align*}
		h_t(z)= \frac{\lambda_t}{g_t(-y)-z}+\mu_t \quad\text{for some constants $\lambda_t>0$ and $\mu_t\in \R$}\;.
	\end{align*}
	Note that $\mu_t = \tilde{g}_t(x)$. By \cite[Section 4.1]{werner-notes} we get for $t\ge 0$ that
	\begin{align*}
		\hcap(\tilde{K}_t) = 2\int_0^t h_s'(\xi_s)^2\,ds = 2\int_0^t \left(\frac{\lambda_s}{(g_s(-y)-\xi_s)^2}\right)^2 \,ds\;.
	\end{align*}
	To obtain $\lambda_t$ we write
	\begin{align*}
		0&=h_t(g_t(-y+i\epsilon)) - \tilde{g}_t(\psi(-y+i\epsilon)) = h_t(g_t(-y)+i\epsilon g'_t(-y) + O(\epsilon^2)) - \tilde{g}_t(x+ixy/\epsilon)\\
		&= i\lambda_t/(\epsilon g'_t(-y)) - ixy/\epsilon + O(1)\quad\text{as $\epsilon\to 0$}\;.
	\end{align*}
	Hence $\lambda_t = xy\cdot g_t'(-y)$ for $t\ge 0$ as required.
\end{proof}

\subsection{Parameter-dependence of classical SLEs with force points}
\label{subsec:sle-force-points}

The starting point of Schramm's seminal work \cite{schramm0} was that the Loewner chain generated by $\sqrt{\kappa}$ times a standard Brownian motion is the  only one with a conformal Markov property, which is expected to hold for the scaling limit of a large number of statistical physics models with appropriately chosen boundary conditions. Rohde and Schramm in \cite{schramm-sle} subsequently showed that this Loewner chain is almost surely generated by a continuous curve and the law of the resulting curve is called $\SLE_\kappa$. A first very natural variant of $\SLE_\kappa$ curves are $\SLE_\kappa(\rho_-, \rho_+)$ curves which appear naturally in a variety of settings when there is one additional marked boundary point. They were first introduced in the case when $\rho_-=0$ or $\rho_+=0$, and $\rho_\pm > -2$ in \cite{lsw-restriction}. In the works \cite{sle-martingales-dubedat,ss-dgff,ig1} they were more generally introduced under only the assumption that $\rho_\pm > -2$ (and also for more than two force points); we will refer to these processes as classical $\SLE_\kappa (\rho)$ type processes.

A  subtle generalization first proposed by Sheffield in \cite{shef-cle} allows  to also consider some values of $\rho$ smaller than $-2$. We will describe and discuss it later in Section \ref {sec:gensle}. 

Consider a standard Brownian motion $B$ and let $\kappa>0$, $\rho_-,\rho_+\in \R$. We now consider the following system of SDEs
\begin{align}
	\label{eq:slerhosde}
	\begin{cases}
		\xi_t = \xi_0 + \sqrt{\kappa} B_t + \int_0^t {\rho_-\,ds} /({\xi_s - O_s^-}) + \int_0^t {\rho_+\,ds}/({\xi_s-O_s^+})\;,\\
		O^\pm_t = O^\pm_0+\int_0^t {2\,ds}/({O_s^\pm - \xi_s})\;,\\
		O^-\le \xi \le O^+\;.
	\end{cases}
\end{align}
$O_0^\pm$ are called force points since on a heuristic level the Loewner chain generated by $\xi$ is attracted to (resp. repelled from) $O_0^\pm$ when $\rho_\pm < 0$ (resp. $\rho_\pm > 0$). Note that when $O^-_0 < \xi_0 < O^+_0$, we can solve the SDE up to the first time $\zeta$ (called the swallowing time) at which $O^+ - \xi$ or $O^- - \xi$ hit $0$. Since $\SLE_\kappa$ is generated by a continuous curve a.s., Girsanov's theorem directly implies that $(K_t\colon t<\zeta)$ is also generated by a continuous curve a.s.\ and the law of this curve is called an $\SLE_\kappa(\rho_-,\rho_+)$ started at $\xi_0$ with force points at $O^\pm_0$ up to the swallowing time.

As explained in \cite[Section 4]{ss-dgff} and also in \cite[Theorem 2.2]{ig1}, when $\rho^\pm > -2$ and for any initial conditions $O^-\le \xi_0 \le O^+_0$, there exists a unique solution in law defined on all of $[0,\infty)$ to the SDE \eqref{eq:slerhosde} and the triple $(\xi,O^\pm)$ forms a strong Markov process. It is moreover established in \cite[Theorem 1.3]{ig1} that the Loewner chain associated to the driving function $\xi$ is almost surely generated by a continuous curve and we call the law of this curve $\SLE_\kappa(\rho_-,\rho_+)$ started at $\xi_0$ with force points at $O^\pm_0$. When we talk about $\SLE_\kappa(\rho_-,\rho_+)$ without mentioning the parameters $(\xi_0,O^\pm_0)$ it is understood that they are zero. The scale invariance of the latter process allows us to define it in general domains with two distinguished prime ends via a conformal mapping.

Of course, one expects that the law of the process $(\xi,O^\pm)$ is in some sense continuous in the parameters $\kappa$ and $\rho_\pm>-2$ for given initial conditions -- the first goal of the present section will be actually to prove the following statement (we will then derive some slightly strengthened variants of this result):

\begin{prop}
	\label{prop:sle-rho-convergence}
	Suppose that $\rho^n_\pm>-2$ and $\kappa^n>0$ for $1\le n\le \infty$ be such that $(\kappa^n,\rho^n_\pm)\to (\kappa^\infty,\rho^\infty_\pm)$ as $n\to \infty$. Let $(\xi^n,O^{n\pm})$ solve the SDE \eqref{eq:slerhosde} with $(\kappa,\rho^\pm)$ replaced by $(\kappa^n,\rho^{n\pm})$ and with $\xi^n_0=O^{n\pm}_0=0$ for all $1\le n\le \infty$. Then $(\xi^n,O^{n\pm})\to (\xi^\infty,O^{\infty\pm})$ as $n\to \infty$ in distribution with respect to uniform convergence on compact sets.
\end{prop}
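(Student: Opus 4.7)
The plan is to establish tightness of $\{(\xi^n, O^{n\pm})\}$ in $C([0,\infty), \R^3)$ and then identify any subsequential weak limit as a solution of the SDE \eqref{eq:slerhosde} for the limiting parameters, concluding via the uniqueness in law cited from \cite{ig1}. The key algebraic observation is that integrating $\dot O^\pm_t = 2/(O^\pm_t - \xi_t)$ with $O^\pm_0 = 0$ yields
\[
\int_0^t \frac{ds}{\xi_s - O^\pm_s} = -\frac{O^\pm_t}{2},
\]
so that \eqref{eq:slerhosde} can be recast in the singularity-free form
\[
\xi_t = \sqrt{\kappa}\, B_t - \frac{\rho_-}{2}\, O^-_t - \frac{\rho_+}{2}\, O^+_t.
\]
This exhibits $\sqrt{\kappa}B$ as a continuous functional of $(\xi, O^\pm)$ and removes the singular drift, which is what makes the SDE amenable to weak limits.

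For tightness I would use that $O^{n-}$ is non-increasing, $O^{n+}$ non-decreasing, and $O^{n-} \le \xi^n \le O^{n+}$, together with uniform moment bounds on $\sup_{t \le T}(|O^{n-}_t| + |O^{n+}_t|)$. Such bounds come from comparing the gaps $X^n := \xi^n - O^{n-}$ and $Y^n := O^{n+} - \xi^n$ with (time-changed) Bessel processes of dimensions $\delta^n_\pm = 1 + 2(\rho^n_\pm + 2)/\kappa^n$, which remain in a compact subset of $(0,\infty)$ by hypothesis on the parameters. The recast identity then transfers equicontinuity from Brownian motion to $\xi^n$.

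Passing to a subsequence and invoking the Skorokhod representation, we may assume $(\xi^n, O^{n\pm}) \to (\xi^\infty, O^{\infty\pm})$ almost surely, uniformly on compacts. The continuous martingales $M^n_t := \sqrt{\kappa^n}\, B^n_t = \xi^n_t + (\rho^n_-/2) O^{n-}_t + (\rho^n_+/2) O^{n+}_t$ satisfy $\langle M^n\rangle_t = \kappa^n t$ and converge a.s.\ uniformly on compacts to $M^\infty_t := \xi^\infty_t + (\rho^\infty_-/2) O^{\infty-}_t + (\rho^\infty_+/2) O^{\infty+}_t$; the $L^2$-bound coming from the quadratic variation together with standard limit theorems for martingales forces $M^\infty$ to be a continuous martingale of quadratic variation $\kappa^\infty t$, hence by Lévy's characterization $M^\infty = \sqrt{\kappa^\infty}\, \tilde B$ for some Brownian motion $\tilde B$. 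To pin down $O^{\infty\pm}$ as the Loewner evolution of the force point $0^\pm$ under driving $\xi^\infty$, I would apply Lemma \ref{lem:loewner-stuff} to $x = \pm\epsilon$ for each $\epsilon > 0$, obtaining $g^n_t(\pm\epsilon) \to g^\infty_t(\pm\epsilon)$, and then let $\epsilon \downarrow 0$.

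The principal technical obstacle is precisely this last step: the force points coincide with the starting point of the driving function, so Lemma \ref{lem:loewner-stuff} does not apply at $x = 0$, and the limit $\epsilon \downarrow 0$ requires a uniform-in-$n$ control on $|g^n_t(\pm\epsilon) - O^{n\pm}_t|$. I expect such a control to follow from the Bessel-type comparisons that underpin tightness together with the instantaneous repulsion of $X^n$ and $Y^n$ from $0$, which is effective precisely because $\rho^n_\pm > -2$ and $\delta^n_\pm$ stay away from $0$. Once the identification of $O^{\infty\pm}$ is complete, the triple $(\xi^\infty, O^{\infty\pm})$ solves \eqref{eq:slerhosde} with parameters $(\kappa^\infty, \rho^\infty_\pm)$ and driving Brownian motion $\tilde B$, and the uniqueness in law for $\SLE_\kappa(\rho_-, \rho_+)$ with $\rho_\pm > -2$ cited from \cite{ig1} concludes the proof.
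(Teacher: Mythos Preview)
Your approach is genuinely different from the paper's and, as you yourself flag, it leaves the core difficulty unresolved. The ``principal technical obstacle'' you describe is not a technicality but the entire content of the proposition: all the work is in controlling the behaviour of $(\xi^n,O^{n\pm})$ uniformly in $n$ at the times when $O^{n\pm}-\xi^n$ vanishes. Your proposed route via Lemma~\ref{lem:loewner-stuff} applied to $x=\pm\epsilon$ does not work as stated: when $\rho^n_+<\kappa^n/2-2$ the curve hits $(0,\infty)$, so $\pm\epsilon$ is swallowed in finite time and $g^n_t(\pm\epsilon)$ ceases to be defined; in particular it cannot serve as an approximation to $O^{n\pm}_t$ for all $t$. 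One could instead try to pass to the limit directly in $O^{n\pm}_t=\int_0^t 2/(O^{n\pm}_s-\xi^n_s)\,ds$, but this needs uniform integrability of $(O^{n\pm}-\xi^n)^{-1}$ over compact time intervals, and since this process is only Bessel-like (there is a cross term from the other force point), establishing this is not immediate. Your tightness sketch has a related soft spot: monotonicity and a range bound on $O^{n\pm}$ do not by themselves give equicontinuity in $C[0,T]$.

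The paper circumvents all of this by a change of coordinates. One time-changes by $\sigma_t=\log(O^+_t-O^-_t)$ and passes to $Y_s=(2\xi_{\tau_s}-(O^+_{\tau_s}+O^-_{\tau_s}))/(O^+_{\tau_s}-O^-_{\tau_s})\in[-1,1]$, which satisfies a one-dimensional SDE with \emph{bounded} coefficients on $[-1,1]$ (the singularities at $O^\pm=\xi$ become the boundary points $y=\pm 1$, where both drift and diffusion coefficients vanish linearly). Tightness and identification of the limit for $Y^n$ then follow from routine moment bounds plus the Stroock--Varadhan martingale problem (Proposition~\ref{prop:approxslekapparho}). Inverting the change of variables gives the result for separated initial force points (Corollary~\ref{cor:sle-rho-convergence-nonzero}); the coinciding-force-point case is obtained by applying the strong Markov property at the first time $\eta_\epsilon$ when $O^+-O^-=\epsilon$, noting that the normalized configuration at that time has an explicit invariant law $\mu_{\kappa,\rho_\pm}$ depending continuously on the parameters (Lemma~\ref{lem:inv-dist-sle-rho}), and that $\eta^n_\epsilon\le\epsilon^2/2$ so the pre-$\eta_\epsilon$ contribution is uniformly negligible. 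This last reduction is exactly the ``uniform-in-$n$ control near $0$'' that your outline is missing.
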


The singularities in the SDE \eqref{eq:slerhosde} make it a little tricky to adapt standard results in the literature. There are various possible ways to prove this; we opt here for a hands-on self-contained approach based on the change of coordinates given in \cite[Section 4]{ss-dgff}, and we refer the reader to the same place for the geometric significance of this approach.
 
Before proceeding, let us first briefly write down some standard results from the theory of Imaginary Geometry \cite{ig1} which are also recalled in \cite[Section 8]{cle-percolations} (for the $\kappa=4$ case, consider the level line coupling with the GFF) and a straightforward consequence which will be important later in our paper. See also Figure \ref{fig:ig-lemmas} where the results below are illustrated.

\begin{lemma}[\cite{ig1}]
	\label{lem:ig-angles}
	Let $\kappa\in (0,4]$ and consider $\rho_+>-2$ and $\rho_-\in (-2,0)$. Then one can couple $\eta_+\sim \SLE_\kappa(0,\rho_+)$ with $\eta_- \sim \SLE_\kappa(-2-\rho_-,2+\rho_-+\rho_+)$ (both from $0$ to $\infty$ in $\H$) such that $\eta_-$ lies left of $\eta_+$ and conditionally on $\eta_-$, the restrictions of $\eta_+$ to the components right of $\eta_-$ are independent $\SLE_\kappa(\rho_-,\rho_+)$ curves.
\end{lemma}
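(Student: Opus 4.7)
The plan is to realize both curves as flow lines of a single Gaussian free field $h$ via the imaginary geometry coupling of \cite{ig1} for $\kappa \in (0,4)$, and via the Schramm--Sheffield level-line coupling (cf. \cite{ss-contour,dubedat-coupling,asw-btls}) for $\kappa = 4$. I state the argument for $\kappa<4$; the $\kappa=4$ case is identical with levels playing the role of angles.

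First I would take $h$ to be a GFF on $\H$ with piecewise constant boundary data such that its flow line from $0$ to $\infty$ at angle $0$ has the prescribed marginal law $\SLE_\kappa(0,\rho_+)$ of $\eta_+$; in the usual normalization this amounts to boundary data $-\lambda$ on $\R_-$ and $-\lambda(1+\rho_+)$ on $\R_+$. Next I would introduce $\eta_-$ as the flow line of the \emph{same} field $h$ at an appropriately chosen positive angle $\theta>0$. By the imaginary geometry dictionary, an angle change by $\theta$ adds a constant $-\theta\chi$ to the boundary data, which shifts the left force-point weight at $0^-$ from $0$ to $-2-\rho_-$ while preserving the total weight at infinity. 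Hence the marginal law of $\eta_-$ is exactly $\SLE_\kappa(-2-\rho_-,\,2+\rho_-+\rho_+)$, as required. The hypothesis $\rho_- \in (-2,0)$ guarantees $-2-\rho_-\in(-2,0)$, so both force-point weights of $\eta_-$ lie in $(-2,\infty)$ and the curve is well defined in the classical sense.

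Because $\eta_-$ has strictly larger angle than $\eta_+$, the flow-line monotonicity theorem of \cite{ig1} yields that $\eta_-$ lies weakly to the left of $\eta_+$. To read off the conditional law, I would apply the Markov property of the GFF to $h$ given $\eta_-$: in each connected component $U$ of $\H\setminus\eta_-$ lying to the right of $\eta_-$, the field $h|_U$ is (up to an explicit harmonic extension) an independent Dirichlet GFF in $U$ whose boundary data along the portion of $\partial U$ coming from $\eta_-$ is determined by the standard flow-line boundary rule. A direct computation shows that the resulting boundary data in $U$ is precisely that which makes the zero-angle flow line of $h|_U$ (from one prime end of $\partial U\cap\eta_-$ to the other) into an $\SLE_\kappa(\rho_-,\rho_+)$; since the excursion of $\eta_+$ in $U$ is exactly this flow line, we obtain the claim. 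The conditional independence of the excursions across distinct components $U$ follows at once from the Markov property of $h$.

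The main obstacle, and really the only one, is the bookkeeping: translating the requested force-point weights into an angle $\theta$, and verifying that the modified boundary data along $\eta_-$ indeed matches the $\SLE_\kappa(\rho_-,\rho_+)$ boundary rule in each right-hand component. Once this dictionary is in place, the proof reduces to citing the flow-line (respectively level-line) monotonicity and Markov-decomposition theorems of \cite{ig1} (respectively \cite{ss-contour,dubedat-coupling,asw-btls}).
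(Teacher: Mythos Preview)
Your proposal is correct and is exactly the approach the paper has in mind: the lemma is not proved in the paper but is cited directly from \cite{ig1}, and Figure~\ref{fig:ig-lemmas} (right panel) illustrates precisely the flow-line coupling you describe, with $\eta_+$ and $\eta_-$ realized as flow lines of a common GFF at two different angles, after which monotonicity and the GFF Markov property give the ordering and the conditional law. One small bookkeeping slip: with the conventions used here (and shown in the figure), the boundary value on $\R_+$ for the angle-$0$ flow line $\eta_+\sim\SLE_\kappa(0,\rho_+)$ is $+\lambda(1+\rho_+)$, not $-\lambda(1+\rho_+)$; this does not affect the argument, since as you note the sum of the two force-point weights is preserved under an angle shift and the computation goes through with the corrected sign.
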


\begin{lemma}[\cite{ig1}]
	\label{lem:ig-right-boundary}
	Consider $\kappa\in (0,4]$, $\kappa'=16/\kappa$, $\rho_+'>-2$ and $\rho_-'>\kappa'/2-4$. Let $\eta'$ be an $\SLE_{\kappa'}(\rho'_-,\rho'_+)$ from $i$ to $0$ in the domain $\R+i(0,1)$. Let $\eta_R$ be the right outer boundary of $\eta'$ which is a continuous curve from $0$ to $i$. Then
	\begin{align*}
		\eta_R &\sim \SLE_\kappa(\kappa/4\cdot(\rho'_++2)-2,\kappa/4\cdot(\rho'_-+4)-4)
	\end{align*}
	from $0$ to $i$ in the domain $\R+i(0,1)$.
\end{lemma}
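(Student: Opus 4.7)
The plan is to deduce the lemma from the imaginary geometry coupling of \cite{ig1} when $\kappa\in (0,4)$, and from the GFF level-line coupling (as in \cite{dubedat-coupling, ss-contour, asw-btls}) when $\kappa=4$, in each case by realising $\eta'$ and $\eta_R$ simultaneously as geometric objects associated to a single Gaussian free field.

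First, via a conformal map sending $i\mapsto 0$ and $0\mapsto\infty$, I would reduce to the situation where $\eta'\sim\SLE_{\kappa'}(\rho'_-,\rho'_+)$ runs from $0$ to $\infty$ in $\H$ with force points at $0^\pm$, and $\eta_R$ is its right outer boundary. I would then couple $\eta'$ with a GFF $h$ on $\H$ whose piecewise constant boundary data on $\R$ is dictated by $\rho'_\pm$ in such a way that $\eta'$ is the counterflow line of $h$ from $0$ to $\infty$ in the sense of \cite[Theorem 1.1]{ig1}. The hypotheses $\rho'_+>-2$ and $\rho'_->\kappa'/2-4$ are exactly the conditions ensuring that $\eta'$ touches both sides of $\R$ arbitrarily close to the origin, so that the right outer boundary is a non-trivial continuous curve all the way from $0$ to $\infty$.

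Second, the Miller--Sheffield outer-boundary rule (\cite[Theorem 1.4]{ig1}) identifies $\eta_R$ with the flow line of $h$ of angle $-\pi/2$ running from $\infty$ to $0$; using target independence and reversibility of flow lines, the time-reversal of $\eta_R$ is then a flow line of the corresponding boundary data field from $0$ to $\infty$, so it is marginally an $\SLE_\kappa(\tilde\rho_-,\tilde\rho_+)$ whose force parameters one reads off via the flow-line dictionary of \cite{ig1}. Concretely, the GFF boundary data is a linear function of $\rho'_\pm$ with slope $\lambda'=\pi/\sqrt{\kappa'}$, the $-\pi/2$ angle rotation contributes a constant shift involving $\chi = 2/\sqrt{\kappa'}-\sqrt{\kappa'}/2$, and flow-line force points are obtained from the shifted boundary data after dividing by $\lambda=\pi/\sqrt{\kappa}$. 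Combining these ingredients via the identities $\pi\chi = 2(\lambda-\lambda')$ and $\lambda/\lambda' = 4/\kappa$ gives force points $\kappa/4\cdot(\rho'_+ + 2)-2$ and $\kappa/4\cdot(\rho'_- + 4)-4$, matching the claim.

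The case $\kappa=4$ has to be handled separately since the imaginary geometry framework degenerates ($\chi=0$ and counterflow lines are no longer defined); here one replaces the coupling above by the GFF level-line coupling of \cite{dubedat-coupling, ss-contour, asw-btls}, so that both $\eta'$ and $\eta_R$ become level lines of the same GFF, and the parallel boundary-data computation yields the same formula (in this case $\kappa=\kappa'=4$ and the parameters on both sides of the identity agree after simplification). The main conceptual obstacle is the careful bookkeeping of boundary data, angle conventions, and the orientation reversal required to identify the right outer boundary ``from $0$ to $i$ in the original domain'' with the flow-line object appearing in \cite{ig1}; all the remaining sign checks reduce to routine applications of their formalism.
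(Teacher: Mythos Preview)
Your proposal is correct and matches the paper's approach: the paper does not give a self-contained proof but simply cites \cite{ig1} and records (in Figure~\ref{fig:ig-lemmas}) the imaginary-geometry boundary data with angle $\theta=-\pi/2$, exactly as in your sketch. One minor simplification worth noting: the paper observes that the case $\kappa=4$ is trivial, since then $\kappa'=4$ and $\eta'$ is already a simple curve, so $\eta_R$ is just the time-reversal of $\eta'$ and the claimed parameters reduce to the reversibility statement for $\SLE_4(\rho'_-,\rho'_+)$; your separate level-line argument is therefore more elaborate than needed. Also, your chosen conformal map (sending $i\mapsto 0$, $0\mapsto\infty$) puts the counterflow line running from $0$ to $\infty$ rather than from $\infty$ to $0$ as in the standard \cite{ig1} setup, which is why you end up invoking reversibility of flow lines; mapping instead $i\mapsto\infty$, $0\mapsto 0$ avoids this extra step.
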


\begin{lemma}
	\label{lem:ig-main-iteration}
	Consider $\kappa\in (0,4]$, $\kappa'=16/\kappa$, $\rho'_+\in (-2,\kappa'/2-2)$, and let $\rho=-\kappa/4\cdot(\rho'_++2)$. Then one can couple $\eta'\sim \SLE_{\kappa'}(0,\rho'_+)$ from $i$ to $0$ in $\R+i(0,1)$ with $\eta\sim \SLE_\kappa(0,\kappa-6-\rho)$ from $0$ to $i$ in $\R+i(0,1)$ such that $\eta$ lies right of $\eta'$ and conditionally on $\eta'$, $\eta$ restricted to the components right of $\eta'$ is given by independent $\SLE_\kappa(\rho,\kappa-6-\rho)$ curves.
\end{lemma}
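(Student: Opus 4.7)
\emph{Plan.} The strategy is to glue the two preceding lemmas along a common \enquote{right boundary} curve. I would first sample $\eta'\sim \SLE_{\kappa'}(0,\rho'_+)$ from $i$ to $0$ in $\R+i(0,1)$ and let $\eta_R$ denote its right outer boundary, which is a simple curve from $0$ to $i$. By Lemma~\ref{lem:ig-right-boundary} applied with $\rho'_-=0$, the marginal law of $\eta_R$ is
\begin{align*}
\SLE_\kappa\bigl(\tfrac{\kappa}{4}(\rho'_++2)-2,\;\kappa-4\bigr)=\SLE_\kappa(-\rho-2,\;\kappa-4),
\end{align*}
using the definition $\rho=-\tfrac{\kappa}{4}(\rho'_++2)$. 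Next I would apply Lemma~\ref{lem:ig-angles} with $\rho_-=\rho\in(-2,0)$ and $\rho_+=\kappa-6-\rho$, for which the matching identities $-2-\rho_-=-\rho-2$ and $2+\rho_-+\rho_+=\kappa-4$ hold by design. That lemma supplies a coupling of two curves $\eta_-\sim\SLE_\kappa(-\rho-2,\kappa-4)$ and $\eta_+\sim\SLE_\kappa(0,\kappa-6-\rho)$ with $\eta_-$ to the left of $\eta_+$, and with $\eta_+$ conditionally on $\eta_-$ an independent $\SLE_\kappa(\rho,\kappa-6-\rho)$ in each component lying to the right of $\eta_-$. Since $\eta_-$ and $\eta_R$ share the same marginal law, I would glue the two constructions: sample $\eta'$ (hence $\eta_R$) first, then sample $\eta$ from the conditional law of $\eta_+$ given $\eta_-=\eta_R$, independently of the remaining randomness that produced $\eta'$ from $\eta_R$.

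At this point $\eta$ has the correct marginal law and lies to the right of $\eta_R$, hence of $\eta'$. What remains is to upgrade the conditioning from $\eta_R$ to the full $\eta'$ and to distribute the single conditional $\SLE_\kappa(\rho,\kappa-6-\rho)$ over the right bubbles of $\eta'$, i.e.\ the components of $(\R+i(0,1))\setminus\eta'$ lying to the right of $\eta'$. By construction, $\eta$ is conditionally independent of $\eta'$ given $\eta_R$, so conditioning on $\eta'$ still leaves $\eta$ an $\SLE_\kappa(\rho,\kappa-6-\rho)$ in the right region of $\eta_R$. One then applies the SLE Markov property at the prime-end pinch points where $\eta'$ touches its own right boundary to split this single $\SLE_\kappa(\rho,\kappa-6-\rho)$ into one independent $\SLE_\kappa(\rho,\kappa-6-\rho)$ per right bubble of $\eta'$.

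\textbf{Main obstacle.} The delicate ingredient is this final bubble decomposition: realizing the single $\SLE_\kappa(\rho,\kappa-6-\rho)$ to the right of $\eta_R$, conditional on the full curve $\eta'$, as a sequence of independent $\SLE_\kappa(\rho,\kappa-6-\rho)$'s inside the separate right components of $(\R+i(0,1))\setminus \eta'$. The cleanest way to see this is to invoke imaginary geometry \cite{ig1} uniformly: realize $\eta$, $\eta_R$ and $\eta'$ as flow and counterflow lines of a single GFF, whereupon the Markov property of the GFF in the complement of $\eta'$ delivers the required conditional independence and, via the induced boundary data, the identification of each restricted law. Secondarily, one must check that $\kappa-6-\rho$ stays within the range where $\SLE_\kappa(\cdot,\kappa-6-\rho)$ is a classical SLE in the sense of Section~\ref{subsec:sle-force-points}, equivalently $\rho'_+>\kappa'-6$; this holds automatically when $\kappa=4$, whereas for $\kappa<4$ and small $\rho'_+$ the relevant processes should be read as the generalized SLEs with force points introduced in Section~\ref{sec:gensle}.
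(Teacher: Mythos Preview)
Your approach is exactly the paper's: take the right boundary $\eta_R$ of $\eta'$ via Lemma~\ref{lem:ig-right-boundary}, observe that its law matches that of $\eta_-$ in Lemma~\ref{lem:ig-angles} with $\rho_-=\rho$ and $\rho_+=\kappa-6-\rho$, and then couple $\eta$ to $\eta'$ so that the two are conditionally independent given $\eta_R$. The paper's proof is four lines and stops there.

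The \enquote{main obstacle} you flag is not actually present. The connected components of the strip lying to the right of $\eta'$ coincide with the connected components lying to the right of $\eta_R$: since $\eta_R$ is by definition the right outer boundary of $\eta'$, the curve $\eta'$ never enters the open region strictly to the right of $\eta_R$, so removing all of $\eta'$ from the strip produces the same right-hand components as removing only $\eta_R$. Lemma~\ref{lem:ig-angles} already asserts that, conditionally on $\eta_-$, the curve $\eta_+$ is an independent $\SLE_\kappa(\rho_-,\rho_+)$ in \emph{each} such component (plural), so no further \enquote{bubble decomposition} or pinch-point argument is required. The conditional independence of $\eta$ and $\eta'$ given $\eta_R$ then upgrades the conditioning from $\eta_R$ to $\eta'$ in one step, exactly as you note. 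Your secondary concern about the range of $\kappa-6-\rho$ is a fair technical point; the paper's proof is equally silent on it, and in the paper's applications one has $\rho'_+\ge \kappa'-6$, which is precisely the threshold ensuring $\kappa-6-\rho\ge -2$.
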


\begin{proof}[Proof of Lemma \ref{lem:ig-main-iteration}] 
	Let $\eta_R$ be the right boundary of $\eta'$ (a curve from $0$ to $i$). Then by Lemma \ref{lem:ig-right-boundary}, we have $\eta_R\sim \SLE_{\kappa'}(\kappa/4\cdot (\rho'_+ + 2)-2,\kappa-4)$. The lemma now follows by the coupling of $\eta_R$ and $\eta$ as in Lemma \ref{lem:ig-angles} (with $\rho_-=\rho$ and $\rho_+=\kappa-6-\rho$) and indeed, we can consider a coupling such that $\eta'$ and $\eta$ are conditionally independent given $\eta_R$. The claim follows.
\end{proof}

\begin{figure}
	\centering
	\def\svgwidth{0.9\columnwidth}
	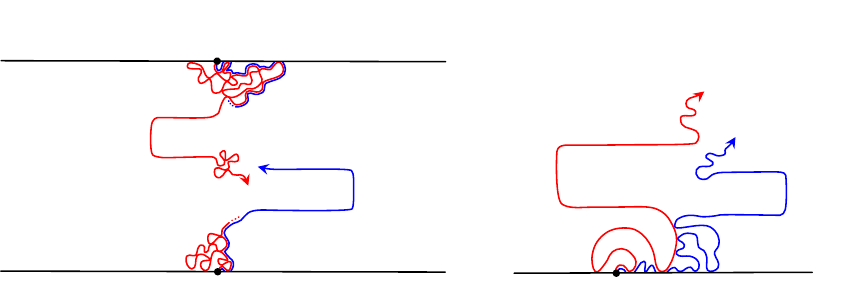
	\caption{\emph{Left.} This figure illustrates Lemma \ref{lem:ig-right-boundary} with $\eta$ being the red curve and the blue simple one is its right boundary $\eta_R$. When $\kappa=4$ the result is trivial, otherwise the boundary conditions appearing in the imaginary geometry proofs have been drawn in where
	$\lambda=\pi/\sqrt{\kappa}$, $\lambda'=\pi/\sqrt{\kappa'}$, $\chi=2/\sqrt{\kappa}-\sqrt{\kappa}/2$  and $\theta=-\pi/2$. 
	\emph{Right.} This is an illustration of Lemma \ref{lem:ig-angles} where the blue curve depicts $\eta_+$ and the red curve $\eta_-$. Again, the imaginary geometry boundary conditions have been drawn in where $\lambda=\pi/\sqrt{\kappa}$.}
	\label{fig:ig-lemmas}
\end{figure}

Let us now go back to our road towards the proof of Proposition \ref{prop:sle-rho-convergence}. Suppose that $O^-_0<O^+_0$. From the SDE \eqref{eq:slerhosde} it is clear that $O^+$ is increasing and $O^-$ is decreasing. It is also easy to see that $O^+_t-O^-_t\to \infty$ as $t\to\infty$ a.s.\ and we may work with a version where this holds surely. Let us define
\begin{align}
	\label{eq:changecoord}
	\begin{split}
		\sigma_t &= \log( O_t^+ - O_t^-) -  \log( O_0^+ - O_0^-)\;,\\
		\tau_s &= \inf\{t\ge 0\colon \sigma_t \ge s\}\;,\\
		Y_s &= \frac{2\xi_{\tau_s}-(O_{\tau_s}^+ + O_{\tau_s}^-)}{O^+_{\tau_s} - O^-_{\tau_s}} = e^{-s}\,\frac{2\xi_{\tau_s}-(O_{\tau_s}^+ + O_{\tau_s}^-)}{O^+_0 - O^-_0} \;.
	\end{split}
\end{align}
From the above remarks, we see that all three processes are continuous and the monotone bijections $\sigma,\tau\colon [0,\infty)\to [0,\infty)$ are inverses of each other. Before proceeding, let us invert theses relations: By \eqref{eq:changecoord} we see that
\begin{align}
	\label{eq:inversefirst}
	\begin{split}
		O_t^\pm - \xi_t &= \frac{O_0^+-O^-_0}{2}\,e^{\sigma_t}(\pm 1-Y_{\sigma_t})\;,\\
		\dot{\tau}_s &= \frac{1}{\dot{\sigma}_{\tau_s}} = \frac{O^+_{\tau_s}- O^-_{\tau_s}}{2/(O^+_{\tau_s}-\xi_{\tau_s})-2/(O^-_{\tau_s}-\xi_{\tau_s})} = \frac{1}{2}\,(O^+_{\tau_s}-\xi_{\tau_s})(\xi_{\tau_s}-O^-_{\tau_s})\\
		&= \frac{(O^+_0 - O^-_0)^2}{8}\,e^{2s} (1-Y_s^2)\;.
	\end{split}
\end{align}
This allows us to determine $\tau$ and hence its inverse $\sigma$ from $Y$. We have thus obtained an explicit formula for $O^\pm-\xi$ in terms of $Y$. From \eqref{eq:inversefirst} and an elementary computation, we get
\begin{align}
	\label{eq:inversesecond}
	\begin{split}
		\xi_t &= O^+_t - (O^+_t - \xi_t) = O_0^+ + \int_0^t \frac{2}{O^+_u-\xi_u}\,du - (O^+_t - \xi_t)\\
		&= O^+_0 + \int_0^{\sigma_t} \frac{2\dot{\tau}_r}{O^+_{\tau_r}-\xi_{\tau_r}}\,dr - (O^+_t - \xi_t)\\
		&= \frac{O_0^+ + O_0^-}{2} + \frac{O_0^+ - O_0^-}{2}\left( e^{\sigma_t}Y_{\sigma_t} + \int_0^{\sigma_t} e^r Y_r\,dr \right)\;.
	\end{split}
\end{align}
Next, let us see which SDE is satisfied by $Y$. In the following computation, we first use the definition of $Y$ given in \eqref{eq:changecoord}, then the given SDE \eqref{eq:slerhosde} and finally we make use of the inverted relations \eqref{eq:inversefirst} to obtain the final expression:
\begin{align}
	\label{eq:besselsquaredapprox}
	\begin{split}
		dY_s &= - Y_s\,ds + \frac{e^{-s}}{O^+_0 - O^-_0}\,\left( 2\cdot d(\xi\circ \tau)_s - d(O^+\circ \tau)_s - d(O^-\circ \tau)_s \right)\\
		&= - Y_s\,ds + \frac{e^{-s}}{O^+_0 - O^-_0}\,\left( 2\sqrt{\kappa}\,d(B\circ \tau)_s - \frac{2\rho_+ +2}{O^+_{\tau_s}-\xi_{\tau_s}}\,\dot{\tau}_s\,ds - \frac{2\rho_- +2}{O^-_{\tau_s}-\xi_{\tau_s}}\,\dot{\tau}_s\,ds \right)\\
		&= -\frac{\rho_+ +2}{2}(Y_s+1)\,ds - \frac{\rho_- + 2}{2}(Y_s-1)\,ds + \sqrt{\frac{\kappa}{2}\,(1-Y_s^2)}\cdot \dot{\tau}_s^{-1/2}\,d(B\circ \tau)_s\\
		&= -\frac{\rho_+ +2}{2}(Y_s+1)\,ds - \frac{\rho_- + 2}{2}(Y_s-1)\,ds + \sqrt{\frac{\kappa}{2}\,(1-Y_s^2)}\,d\tilde{B}_s
	\end{split}
\end{align}
where the process $\tilde{B}=\int_0^{\,\cdot} \dot{\tau}_s^{-1/2}\,d(B\circ \tau)_s$ is a standard Brownian motion. We will now analyze the resulting SDE in some detail. First, the generator $\mathcal{L}_{\kappa,\rho_\pm}$ of the process $Y$ is
\begin{align*}
	\mathcal{L}_{\kappa,\rho_\pm}f(y) &= \left(-\frac{\rho_+ + 2}{2}(y+1)-\frac{\rho_- +2}{2}(y-1)\right)f'(y)+ \frac{\kappa}{4}(1-y^2)f''(y)
\end{align*}
defined for all twice differentiable $f$. The reader can then check that the invariant distribution is given by
\begin{align*}
	\mu_{\kappa,\rho_\pm}(dy) = \frac{1_{(-1,1)}(y)/2\,dy}{B(2(\rho_-+2)/\kappa,2(\rho_++2)/\kappa)} \left(\frac{1+y}{2}\right)^{2(\rho_-+2)/\kappa-1} \left(\frac{1-y}{2}\right)^{2(\rho_+ +2)/\kappa-1}
\end{align*}
where in the denominator $B$ denotes the Beta function. By a coupling argument, the law of $Y_s$ converges to $\mu_{\kappa,\rho_\pm}$ as $s\to \infty$. From this, one can deduce the following lemma.

\begin{lemma}
	\label{lem:inv-dist-sle-rho}
	Let $(\xi,O^\pm)$ solve \eqref{eq:slerhosde} with $\xi_0=O_0^\pm=0$. Let $\eta_\epsilon=\inf\{t\ge 0\colon O^+_t-O^+_t=\epsilon\}$. Then for any $\epsilon>0$,
	\begin{align*}
		\frac{2\xi_{\eta_\epsilon}-(O^+_{\eta_\epsilon}+O^-_{\eta_\epsilon})}{\epsilon}\sim \mu_{\kappa,\rho_\pm}\;.
	\end{align*}
\end{lemma}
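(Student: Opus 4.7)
The plan is to combine scale invariance, the strong Markov property at $\eta_\epsilon$, and the change of coordinates \eqref{eq:changecoord} in order to show that the law $\nu$ of $(2\xi_{\eta_\epsilon} - O^+_{\eta_\epsilon} - O^-_{\eta_\epsilon})/\epsilon$ is invariant for the Markov semigroup of the autonomous diffusion on $(-1,1)$ given by \eqref{eq:besselsquaredapprox}. Since the coupling argument mentioned just before the lemma identifies $\mu_{\kappa,\rho_\pm}$ as the unique invariant distribution, this will force $\nu = \mu_{\kappa,\rho_\pm}$.

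First, I would use Brownian scaling applied to \eqref{eq:slerhosde} to show that if $(\xi, O^\pm)$ is a solution from $(0,0,0)$, then $(\lambda^{-1}\xi_{\lambda^2 \cdot}, \lambda^{-1}O^\pm_{\lambda^2\cdot})$ is again a solution from $(0,0,0)$ for any $\lambda > 0$; under this rescaling $\eta_\epsilon$ becomes $\lambda^{-2}\eta_{\lambda\epsilon}$, and consequently the law $\nu$ of the ratio $(2\xi_{\eta_\epsilon} - O^+_{\eta_\epsilon} - O^-_{\eta_\epsilon})/\epsilon$ does not depend on $\epsilon > 0$.

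Next I would fix $\epsilon \in (0, 1)$ and consider the shifted process $(\xi_{\eta_\epsilon + u}, O^\pm_{\eta_\epsilon + u})_{u\ge 0}$. By the strong Markov property this is again a solution to \eqref{eq:slerhosde}, now started at the configuration at time $\eta_\epsilon$; well-posedness from this non-degenerate starting configuration is ensured because $\rho_\pm > -2$ and $O^+_{\eta_\epsilon} - O^-_{\eta_\epsilon} = \epsilon > 0$, so the machinery of \cite{ss-dgff, ig1} applies directly. Applying the change of coordinates \eqref{eq:changecoord} to the shifted process produces a diffusion $\tilde{Y}$ satisfying \eqref{eq:besselsquaredapprox} with initial condition $\tilde{Y}_0 = (2\xi_{\eta_\epsilon} - O^+_{\eta_\epsilon} - O^-_{\eta_\epsilon})/\epsilon \sim \nu$. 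At the first time (in shifted time) when the width $O^+ - O^-$ equals $1$ -- namely $\eta_1 - \eta_\epsilon$ -- the time-change value from \eqref{eq:changecoord} is $\log(1/\epsilon)$, so unwinding the definition of $\tilde Y$ yields
\[
2\xi_{\eta_1} - O^+_{\eta_1} - O^-_{\eta_1} = \tilde{Y}_{\log(1/\epsilon)}\;.
\]
By the scale invariance established in the previous paragraph the left-hand side is distributed as $\nu$; writing $s = \log(1/\epsilon) > 0$, this means that the law of $\tilde{Y}_s$ started from $\nu$ equals $\nu$ for every $s > 0$, so $\nu$ is invariant for the Markov semigroup of $Y$.

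Combining this with the coupling argument that $Y_s \to \mu_{\kappa,\rho_\pm}$ in distribution as $s \to \infty$ from any initial condition (obtained by running two copies of \eqref{eq:besselsquaredapprox} with the same Brownian driver until they meet) then shows that $\mu_{\kappa,\rho_\pm}$ is the unique invariant distribution, giving $\nu = \mu_{\kappa,\rho_\pm}$ as required. The main delicacy I anticipate is the strong Markov step: one must verify that starting \eqref{eq:slerhosde} from an arbitrary configuration with $O^- \le \xi \le O^+$ and $O^- < O^+$ gives a process with the expected SDE and Markov behavior, and that the Brownian motion driving the shifted process is independent of $\mathcal{F}_{\eta_\epsilon}$. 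Since such starting configurations are non-degenerate and $\rho_\pm > -2$, both facts are standard consequences of the well-posedness results in \cite{ss-dgff, ig1}.
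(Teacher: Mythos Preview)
Your argument is correct and is exactly the natural way to fill in the proof the paper omits: the paper simply remarks that convergence of $Y_s$ to $\mu_{\kappa,\rho_\pm}$ (via the coupling argument) makes the lemma a ``simple consequence'', and your combination of Brownian scaling, the strong Markov property at $\eta_\epsilon$, and the change of coordinates \eqref{eq:changecoord} is precisely how one turns that hint into a proof. The only cosmetic difference is that one could bypass the explicit scale-invariance step and instead apply the strong Markov property at $\eta_\delta$ for $\delta\downarrow 0$, using that $Y_{\log(\epsilon/\delta)}\to\mu_{\kappa,\rho_\pm}$ uniformly in the starting point; but this is the same idea and uses the same ingredients.
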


The idea of the proof is to consider the conditional law of $(2\xi_{\eta_\epsilon}-(O^+_{\eta_\epsilon}+O^-_{\eta_\epsilon}))/\epsilon$ given $\xi|_{[0,\eta_{\epsilon'}]},O^\pm_{[0,\eta_{\epsilon'}]}$ for $\epsilon'\in (0,\epsilon)$ and consider the limit $\epsilon'\to 0$. We leave the details to the reader.

\begin{prop}
	\label{prop:approxslekapparho}
	Let $\rho_\pm^n\in \R$, $\kappa^n>0$ and $y^n_0\in [-1,1]$ for $1\le n\le \infty$ be such that $y^n_0\to y_0^\infty$ and $(\kappa^n,\rho_\pm^n) \to (\kappa^\infty,\rho_\pm^\infty)$ as $n\to \infty$. Moreover, for all $1\le n\le \infty$, let $Y^n$ be the unique (strong) solution with initial condition $Y^n_0 = y^n_0$ to the SDE 
	\begin{align*}
		dY^n_s = -\frac{\rho^n_+ +2}{2}(Y_s+1)\,ds - \frac{\rho^n_- + 2}{2}(Y_s-1)\,ds + \sqrt{\frac{\kappa^n}{2}\,(1-Y_s^2)}\,d\tilde{B}_s\;.
	\end{align*}
	Then $Y^n \to Y^\infty$ as $n\to\infty$ in distribution with respect to uniform convergence on compact sets.
\end{prop}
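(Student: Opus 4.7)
The plan is a standard tightness / martingale-problem / uniqueness argument, made feasible by the compactness of the state space $[-1,1]$.

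First, I would establish tightness of the laws of $(Y^n)$ on $C([0,\infty),[-1,1])$. Since $Y^n$ takes values in $[-1,1]$, the drift and diffusion coefficients of the SDE are uniformly bounded in $n$ and in the state variable. A Burkholder--Davis--Gundy estimate applied to the stochastic part, together with Jensen on the drift part, gives $\E[(Y^n_t-Y^n_s)^4]\le C(t-s)^2$ uniformly in $n$ on any compact time interval, and combined with the convergence $Y^n_0\to Y^\infty_0$ this yields tightness via Kolmogorov--Chentsov on each $C([0,T],[-1,1])$, hence on $C([0,\infty),[-1,1])$.

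Second, I would identify subsequential limits via the martingale problem for
\begin{equation*}
	\mathcal{L}^n f(y) = \left(-\frac{\rho_+^n+2}{2}(y+1)-\frac{\rho_-^n+2}{2}(y-1)\right)f'(y)+\frac{\kappa^n}{4}(1-y^2)f''(y).
\end{equation*}
For $f\in C^2([-1,1])$, the process $M^{n,f}_t=f(Y^n_t)-f(Y^n_0)-\int_0^t \mathcal{L}^n f(Y^n_u)\,du$ is a bounded continuous martingale by It\^o's formula. Via Skorokhod representation, along any convergent subsequence we may work on a common probability space with $Y^n\to Y$ uniformly on compacts almost surely. Since the coefficients of $\mathcal{L}^n f$ are continuous in the parameters, $\mathcal{L}^n f\to \mathcal{L}^\infty f$ uniformly on $[-1,1]$, and bounded convergence lets us pass to the limit inside the defining martingale identity (testing against bounded continuous functionals of $(Y_u)_{u\le s}$). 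Thus $Y$ solves the martingale problem for $\mathcal{L}^\infty$ with initial value $y_0^\infty$.

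Third, the hypothesis that the limit SDE admits a unique strong solution yields uniqueness in law, and the standard correspondence between the SDE and its martingale problem then forces any subsequential limit to have the law of $Y^\infty$. Combining tightness with this identification of the unique possible cluster point gives the desired convergence.

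The main delicacy is the behavior at the degenerate boundary $y=\pm 1$: depending on the signs of $\rho^n_\pm+2$, the process $Y^n$ may be instantly absorbed, reflected, or stay strictly inside. The reason this does not obstruct the argument is that $\mathcal{L}^n$ extends continuously to the closed interval as a first-order operator at $\pm 1$, and the martingale-problem formulation sidesteps the issue of making sense of the stochastic integral when the Brownian driver itself varies with $n$; uniqueness of the limit SDE absorbs any remaining delicacy about which boundary behavior the martingale problem selects.
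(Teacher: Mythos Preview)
Your proposal is correct and follows essentially the same route as the paper: establish tightness via a BDG moment estimate and Kolmogorov--Chentsov, pass to subsequential limits via Skorokhod representation, verify the limiting martingale problem for $\mathcal{L}^\infty$, and conclude by uniqueness in law of the limit SDE (the paper cites Stroock--Varadhan theory via \cite[Theorem 21.7]{kallenberg} at this step). Your closing remark about the boundary behavior at $\pm 1$ being absorbed by the martingale-problem formulation is a nice observation that the paper leaves implicit.
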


The fact that solutions to the above SDEs with values in $[-1,1]$ exist, that they are strong and that pathwise uniqueness holds are consequences of classical SDE theory and we refer the reader to \cite[Chapter IX]{revuz-yor}.

\begin{proof}
	We follow the classical strategy of first establishing tightness and then using Stroock-Varadhan theory to conclude. From the SDE and the BDG inequality we see that
	\begin{align*}
		\E\left((Y^n_r - Y^n_s)^{2p}\right) \lesssim_{\,p,(\kappa^m),(\rho^m_\pm)} |r-s|^{2p} + |r-s|^p
	\end{align*}
	for all $r,s\ge 0$, $1\le n\le \infty$ and $p\ge 1$. A dyadic subdivision argument permits us to bound the expectation of the Hölder norm of $Y^n$ with exponent $\alpha$ on $[0,S]$ uniformly in $1\le n\le \infty$ for each fixed $S\ge 0$ and $\alpha \in (0,1/2)$; combining this with the boundedness of $(y^n_0)$ then yields tightness of the family of processes $(Y^n)$ in the topology of uniform convergence on compact sets -- we refer the reader to \cite[Corollary 16.9]{kallenberg} (this is a quantitative version of Kolmogorov's theorem) for details. By Prokhorov's theorem and Skorokhod's representation theorem, for each subsequence $(n_k)$, there exists a further subsequence $(n_{k_l})$ and random processes $Z^l$ for $1\le l\le \infty$ such that $Y^{n_{k_l}}=_d Z^l$ for $l<\infty$ and $Z^l\to Z^\infty$ uniformly on compact sets a.s.\ as $l\to \infty$. Now by Itô's formula, for each $1\le n\le \infty$ and $f\in C^\infty(\R)$,
	\begin{align*}
		M^{f,n} = f(Y^n) - f(y^n_0) + \int_0^{\,\cdot} &\Bigl( \frac{\rho^n_++2}{2}(Y^n_r + 1)f'(Y^n_r) + \frac{\rho^n_-+2}{2}(Y^n_r - 1)f'(Y^n_r) \\
		&\qquad- \frac{\kappa^n(1-(Y^n_r)^2)}{4}\,f''(Y^n_r) \Bigr) \,dr
	\end{align*}
	is a martingale starting from $0$ in the filtration generated by itself. Since $Y^{n_{k_l}}=_d Z^l$ for $l<\infty$ and $Z^l\to Z^\infty$ uniformly on compact sets a.s.\ as $l\to \infty$, we deduce from the fact that $(\kappa^n,\rho^n_\pm)\to (\kappa^\infty,\rho^\infty_\pm)$ and $y^n_0\to y^\infty_0$ as $n\to \infty$ that
	\begin{align*}
		f(Z^\infty) - f(y^\infty_0) + \int_0^{\,\cdot} &\Bigl( \frac{\rho^\infty_++2}{2}(Z^\infty_r + 1)f'(Z^\infty_r) + \frac{\rho^\infty_-+2}{2}(Z^\infty_r - 1)f'(Z^\infty_r) \\
		&\qquad- \frac{\kappa^\infty(1-(Z^\infty_r)^2)}{4}\,f''(Z^\infty_r) \Bigr) \,dr
	\end{align*}
	is also a martingale starting from $0$ in its own filtration. By Stroock-Varadhan theory, see for instance \cite[Theorem 21.7]{kallenberg}, this implies that $Z^\infty=_d Y^\infty$; note that here we are using the uniqueness in law property of the given SDE. Since this is true for any subsequence $(n_k)$, the claim readily follows.
\end{proof}

\begin{cor}
	\label{cor:sle-rho-convergence-nonzero}
	Suppose that $\rho^n_\pm>-2$, $\kappa^n>0$ and $x^n_-<x^n_+$ with $x^n_-\le 0\le x^n_+$ for $1\le n\le \infty$ be such that $(\kappa^n,\rho^n_\pm,x^n_\pm)\to (\kappa^\infty,\rho^\infty_\pm,x^\infty_\pm)$ as $n\to \infty$. Let $(\xi^n,O^{n\pm})$ solve the SDE \eqref{eq:slerhosde} with $(\kappa,\rho^\pm)$ replaced by $(\kappa^n,\rho^{n\pm})$ and with $\xi^n_0=0$, $O^{n\pm}_0=x^n_\pm$ for all $1\le n\le \infty$. Then $(\xi^n,O^{n\pm})\to (\xi^\infty,O^{\infty\pm})$ as $n\to \infty$ in distribution with respect to uniform convergence on compact sets.
\end{cor}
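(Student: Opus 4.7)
The approach is to reduce the statement to Proposition \ref{prop:approxslekapparho} via the change of coordinates \eqref{eq:changecoord}. The crucial observation is that the SDE \eqref{eq:besselsquaredapprox} for $Y$ depends on the initial data $(\xi^n_0, O^{n\pm}_0) = (0, x^n_\pm)$ only through the normalized initial condition
\begin{align*}
	Y^n_0 = -\frac{x^n_+ + x^n_-}{x^n_+ - x^n_-} \in [-1, 1],
\end{align*}
which is well-defined because $x^n_+ - x^n_- > 0$, and satisfies $Y^n_0 \to Y^\infty_0$ under our hypotheses. Proposition \ref{prop:approxslekapparho} then yields $Y^n \to Y^\infty$ in distribution with respect to uniform convergence on compacts; by Skorokhod's representation theorem, we may assume the convergence holds almost surely.

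Next, I would invert the change of coordinates. From \eqref{eq:inversefirst}, the time change can be written as
\begin{align*}
	\tau^n_s = \frac{(x^n_+ - x^n_-)^2}{8} \int_0^s e^{2r}\bigl(1 - (Y^n_r)^2\bigr)\, dr,
\end{align*}
which is a continuous functional of $Y^n$, and dominated convergence (using the uniform bound $|Y^n_r| \le 1$) gives $\tau^n \to \tau^\infty$ uniformly on compacts almost surely. Since $\rho^\infty_\pm > -2$, the limiting diffusion $Y^\infty$ admits the stationary distribution $\mu_{\kappa^\infty, \rho^\infty_\pm}$ of full support in $(-1, 1)$ (as recalled just before Lemma \ref{lem:inv-dist-sle-rho}), so almost surely $Y^\infty$ is not identically $\pm 1$ on any interval of positive length, and $\int_0^s (1 - (Y^\infty_r)^2)\, dr$ grows linearly in $s$. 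Consequently $\tau^\infty$ is almost surely strictly increasing with $\tau^\infty_s \to \infty$ as $s \to \infty$, and so $\sigma^n = (\tau^n)^{-1} \to \sigma^\infty$ uniformly on compacts almost surely. Plugging these convergences into the explicit recovery formulas \eqref{eq:inversefirst} and \eqref{eq:inversesecond} — which are jointly continuous in $(Y^n, \sigma^n, x^n_\pm)$ — then gives $(\xi^n, O^{n\pm}) \to (\xi^\infty, O^{\infty\pm})$ uniformly on compacts almost surely, and hence in distribution as claimed.

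The main obstacle I anticipate is the continuity of the inversion $\tau \mapsto \sigma$ at $\tau^\infty$: making this rigorous hinges on the strict monotonicity and unboundedness of $\tau^\infty$, which in turn rely on the non-degeneracy of $Y^\infty$ at the boundary $\pm 1$. This is the only step where the hypothesis $\rho^\infty_\pm > -2$ is genuinely used beyond the content of Proposition \ref{prop:approxslekapparho} itself.
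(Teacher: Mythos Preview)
Your proposal is correct and follows essentially the same route as the paper: reduce to Proposition \ref{prop:approxslekapparho} via the coordinate change \eqref{eq:changecoord}, pass to an almost sure coupling via Skorokhod, show $\tau^n\to\tau^\infty$ uniformly on compacts, and then invert using strict monotonicity of $\tau^\infty$ together with \eqref{eq:inversefirst}--\eqref{eq:inversesecond}. The paper's proof is terser (it simply asserts that $\tilde{\tau}^\infty$ is strictly increasing a.s.), whereas you supply an explicit reason via the non-degeneracy of $Y^\infty$ at $\pm 1$; this extra justification is welcome, though the linear growth claim is stronger than needed and the cleaner argument is that the drift at $Y=\pm 1$ is $\mp(\rho^\infty_\pm+2)\neq 0$, so $Y^\infty$ instantaneously leaves the boundary.
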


\begin{proof}
	Let $(Y^n)$ be as in Proposition \ref{prop:approxslekapparho} with $y^n_0 = -(x^n_+ + x^n_-)/(x^n_+ - x^n_-)$ and the same $(\kappa^n,\rho^n_\pm)$ values for $n\le \infty$. By Skorokhod’s representation theorem and Proposition \ref{prop:approxslekapparho}, there are processes $Z^n$ such that $Z^n=_d Y^n$ for all $n\le \infty$ and such that $Z^n\to Z^\infty$ uniformly on compacts almost surely as $n\to \infty$. By \eqref{eq:changecoord}, \eqref{eq:inversefirst} and \eqref{eq:inversesecond} it suffices to show that $\tilde{\sigma}^n\to \tilde{\sigma}^\infty$ uniformly on compacts a.s.\ as $n\to \infty$ where
	\begin{align*}
		\tilde{\tau}^n = \frac{(x^n_+-x^n_-)^2}{8}\int_0^{\,\cdot} e^{2r} (1-(Z^n_r)^2)\,dr\quad\text{and}\quad \tilde{\sigma}^n_t = \inf\{s\ge 0\colon \tilde{\tau}_s\ge t\}\quad\text{for $t\ge 0$}\;.
	\end{align*}
	Clearly $\tilde{\tau}^n\to \tilde{\tau}^\infty$ uniformly on compacts a.s.\ as $n\to \infty$. The result follows since $\tilde{\tau}^\infty$ is strictly increasing a.s.
\end{proof}

\begin{proof}[Proof of Proposition \ref {prop:sle-rho-convergence}]
	We can finally proceed with this proof: We fix $\epsilon>0$ and define stopping times $\eta_\epsilon^n=\inf\{t\ge 0\colon O^{n+}_t-O^{n-}_t = \epsilon\}$. Then by Lemma \ref{lem:inv-dist-sle-rho} we have
	\begin{align*}
		\frac{2\xi_{\eta_\epsilon^n}^n-(O^{n+}_{\eta_\epsilon^n}+O^{n-}_{\eta_\epsilon^n})}{\epsilon}\sim \mu_{\kappa^n,\rho^n_\pm}\quad\text{for $n\le \infty$ and $\epsilon>0$}\;.
	\end{align*}
	Since $\mu_{\kappa^n,\rho^n_\pm}\to \mu_{\kappa^\infty,\rho^\infty_\pm}$ weakly as $n\to \infty$, Corollary \ref{cor:sle-rho-convergence-nonzero} and the strong Markov property of $(\xi^n,O^{n\pm})$ at time $\eta^n_\epsilon$ imply that
	\begin{align*}
		(\xi^n_{\cdot+\eta^n_\epsilon}-\xi^n_{\eta^n_\epsilon},O^{n+}_{\cdot+\eta^n_\epsilon}-\xi^n_{\eta^n_\epsilon},O^{n-}_{\cdot+\eta^n_\epsilon}-\xi^n_{\eta^n_\epsilon}) \to (\xi^\infty_{\cdot+\eta^\infty_\epsilon}-\xi^\infty_{\eta^\infty_\epsilon},O^{\infty+}_{\cdot+\eta^\infty_\epsilon}-\xi^\infty_{\eta^\infty_\epsilon},O^{\infty-}_{\cdot+\eta^\infty_\epsilon}-\xi^\infty_{\eta^\infty_\epsilon})
	\end{align*}
	in distribution with respect to uniform convergence on compacts as $n\to \infty$ for all $\epsilon>0$. Since $\pm O^{n\pm}$ are non-decreasing and $O^{n-}\le \xi^n\le O^{n+}$ we get that
	\begin{gather*}
		\sup_{[0,\eta^n_\epsilon]} (|\xi^n| + |O^{n+}| + |O^{n-}| ) \lesssim \epsilon\;,\\
		\epsilon \ge \int_0^{\eta^n_\epsilon} \frac{2}{O^{n+}_t - \xi^n_t}\,dt \ge 2\eta^n_\epsilon/\epsilon \quad\text{so that}\quad \eta^n_\epsilon \le \epsilon^2/2\;.
	\end{gather*}
	Note that the last bound above can be seen as bounding the half-plane capacity by the capacity squared (divided by two). Furthermore
	\begin{align*}
		\sup_{r\in [0,\epsilon^2/2]}\sup_{[0,T]}\left(|\xi^\infty_{\cdot+r}-\xi^\infty|+|O^{\infty+}_{\cdot+r}-O^{\infty+}|+|O^{\infty-}_{\cdot+r}-O^{\infty-}|\right) \to 0
	\end{align*}
	as $\epsilon \to 0$ for all $T\ge 0$. The claim follows by combining the observations above (using for instance Skorokhod’s representation theorem to organize the argument).
\end{proof}

The remaining results in this section will now build on Proposition \ref{prop:sle-rho-convergence} to derive the results that will actually be used at the end of the paper. The main additional feature is related to the fact that we also want to control the swallowing times of given points. It is worth recalling here that an $\SLE_\kappa(\rho_-,\rho_+)$ with $\rho_-, \rho_+ > -2$ will almost surely hit the positive half-line as soon as $\rho_+ < -2 + \kappa / 2$, i.e., all swallowing times $\zeta_x$ for $x>0$ will almost surely be finite.

\begin{prop}
	\label{prop:boosted-sle-conv}
	Suppose that $(\kappa^n,\rho^n_\pm,\xi^n,O^{n\pm})$ are as in Proposition \ref{prop:sle-rho-convergence}. Assume also that $\rho^n_+ < \kappa^n/2 - 2$ and $\rho^n_+\le 0$ for all $n\le \infty$. Then for $x>0$,
	\begin{align*}
		(\xi^n,O^{n\pm},\zeta^n_x) \to (\xi^\infty,O^{\infty\pm},\zeta^\infty_x)
	\end{align*}
	in distribution as $n\to \infty$ with respect to uniform convergence on compacts in the first three components and with respect to the Euclidean metric in the final component of the tuple.
\end{prop}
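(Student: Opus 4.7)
By Proposition \ref{prop:sle-rho-convergence} together with Skorokhod's representation theorem, we may work on a probability space where $(\xi^n, O^{n\pm}) \to (\xi^\infty, O^{\infty\pm})$ uniformly on compacts almost surely; it then suffices to prove that $\zeta^n_x \to \zeta^\infty_x$ in probability. Lemma \ref{lem:loewner-stuff} applied to the point $x$ directly yields the lower bound $\zeta^\infty_x \le \liminf_n \zeta^n_x$ a.s., together with uniform convergence of $g^n_\cdot(x) - \xi^n_\cdot$ to $g^\infty_\cdot(x) - \xi^\infty_\cdot$ on every compact subset of $[0, \zeta^\infty_x)$. Since $\rho^\infty_+ < \kappa^\infty/2 - 2$ and $\rho^\infty_- > -2$, one knows that $\zeta^\infty_x < \infty$ a.s.\ and $g^\infty_t(x) - \xi^\infty_t \to 0$ as $t \uparrow \zeta^\infty_x$.

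\textbf{Upper bound via stopping times and scaling.} For the matching upper bound, fix $\epsilon > 0$ small and introduce the $\mathcal{F}^n$-stopping times $T^n_\epsilon := \inf\{t \ge 0 : g^n_t(x) - \xi^n_t \le \epsilon\}$, which are finite and strictly smaller than $\zeta^n_x$ for every $n \le \infty$. The uniform convergence of $g^n_\cdot(x) - \xi^n_\cdot$ combined with the a.s.\ transversal crossing of level $\epsilon$ by the limiting semimartingale $g^\infty_\cdot(x) - \xi^\infty_\cdot$ (nondegeneracy of quadratic variation) gives $T^n_\epsilon \to T^\infty_\epsilon$ a.s. By the strong Markov property of $(\xi^n, O^{n\pm})$ at $T^n_\epsilon$ and Brownian scale invariance of the SDE \eqref{eq:slerhosde}, the conditional law of $\zeta^n_x - T^n_\epsilon$ given $\mathcal{F}^n_{T^n_\epsilon}$ is that of $\epsilon^2 \widetilde{\Delta}^n$, where $\widetilde{\Delta}^n$ is the swallowing time of the point $1$ for an $\SLE_{\kappa^n}(\rho^n_-, \rho^n_+)$ started at $\xi = 0$ with force points $\widetilde{O}^{n+} = (O^{n+}_{T^n_\epsilon} - \xi^n_{T^n_\epsilon})/\epsilon \in [0, 1]$ and $\widetilde{O}^{n-} = (O^{n-}_{T^n_\epsilon} - \xi^n_{T^n_\epsilon})/\epsilon \le 0$.

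\textbf{Uniform tightness and conclusion --- the main obstacle.} The crux is to show that $\widetilde{\Delta}^n$ is tight in probability, uniformly in $n$ and in the random values of $\widetilde{O}^{n\pm}$ in these ranges. The subtle point is that $|\widetilde{O}^{n-}|$ typically tends to $+\infty$ as $\epsilon \to 0$ (since $O^{n-}_{T^n_\epsilon}$ stays bounded while we rescale by $1/\epsilon$), so one has to handle a degenerating left force point. In the limiting regime $\widetilde{O}^{n-} = -\infty$ the process reduces to an $\SLE_{\kappa^n}(\rho^n_+)$ whose swallowing time of $1$ is finite a.s.\ (as $\rho^n_+ < \kappa^n/2 - 2$) and tight in $(\kappa^n, \rho^n_+, \widetilde{O}^{n+})$ on the relevant compact parameter set, by a single-force-point variant of Corollary \ref{cor:sle-rho-convergence-nonzero}; the contribution of a finite left force point at $\widetilde{O}^{n-} \le 0$ with $\rho^n_- > -2$ is a drift on $\xi$ of size at most $|\rho^n_-|/|\widetilde{O}^{n-}|$, uniformly bounded and vanishing as $|\widetilde{O}^{n-}| \to \infty$, and a coupling argument (driving both processes by the same Brownian motion) converts this into the required uniform tightness. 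Given this tightness and the analogous decomposition $\zeta^\infty_x - T^\infty_\epsilon = \epsilon^2 \widetilde{\Delta}^\infty$ for the limit, the identity $\zeta^n_x - \zeta^\infty_x = (T^n_\epsilon - T^\infty_\epsilon) + \epsilon^2 (\widetilde{\Delta}^n - \widetilde{\Delta}^\infty)$ can be made arbitrarily small in probability by first choosing $\epsilon$ small and then $n$ large, finishing the proof.
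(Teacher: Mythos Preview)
Your approach is the same as the paper's: Skorokhod reduction, the liminf bound from Lemma~\ref{lem:loewner-stuff}, stopping when $g^n_t(x)-\xi^n_t$ first reaches $\epsilon$, rescaling by $\epsilon$, and then showing that the residual time is uniformly small. The paper packages the last step as a separate Lemma~\ref{lem:hit-boundary-fast}, and this is where your sketch has a gap.

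Your claimed bound ``the contribution of a finite left force point $\dots$ is a drift on $\xi$ of size at most $|\rho^n_-|/|\widetilde O^{n-}|$'' is only true at time zero. The drift at time $t$ is $\rho^n_-/(\xi_t - O^{n-}_t)$, and nothing prevents $\xi_t - O^{n-}_t$ from shrinking substantially before the point $1$ is swallowed; if it does, your coupling argument breaks down. The paper deals with this by introducing the stopping time $T=\inf\{t:\xi_t-O^-_t=C_\delta/2\}$, using Girsanov on $[0,t_\delta\wedge T]$ to compare with the single-force-point process (with an explicit Radon--Nikodym bound $\exp(t_\delta\rho_-^2/((C_\delta/2)^2\kappa))$), and separately bounding $\P(T\le t_\delta)$ via $\xi\ge\sqrt{\kappa}B$. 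The single-force-point swallowing time is then controlled by a Bessel comparison. Your ``coupling argument (driving both processes by the same Brownian motion)'' is gesturing at this, but without the stopping time $T$ you cannot bound the drift uniformly over the time interval you need, so the argument as written is incomplete.

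Two smaller points. First, after rescaling the left force point satisfies $|\widetilde O^{n-}|\ge x/(2\epsilon)$ \emph{deterministically} (since $g^n_{T^n_\epsilon}(x)\ge x$ and $O^{n-}\le 0$), not just ``typically''; this deterministic bound is what feeds into Lemma~\ref{lem:hit-boundary-fast}. Second, the paper sidesteps your transversal-crossing claim for $T^n_\epsilon\to T^\infty_\epsilon$ by instead showing $\P(T^n_\epsilon>T^\infty_{\epsilon/2})\to 0$, which follows directly from uniform convergence of $g^n(x)-\xi^n$ on $[0,T^\infty_{\epsilon/2}]$ without any regularity of the level set.
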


The proof of this proposition will build on the following lemma: 

\begin{lemma}
	\label{lem:hit-boundary-fast}
	Fix $\epsilon>0$. Then for all $\delta>0$ there exists $C_\delta, t_\delta >0$ such that the following is true. Suppose $(\kappa,\rho^\pm)$ are such that $\epsilon\le \kappa\le 1/\epsilon$, $\rho_-\le 1/\epsilon$ and $\rho_+\le (\kappa/2-2-\epsilon\kappa)\wedge 0$. Moreover let $(\xi,O^{\pm})$ solve the SDE \eqref{eq:slerhosde} with $\xi_0=0$, $O^-_0\le -C_\delta$ and $O^+_0\in [0,1]$. Then $\P(\zeta_1\ge t_\delta)\le \delta$.
\end{lemma}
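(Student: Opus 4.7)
The plan is to control the process $W_t := g_t(1) - \xi_t$, which starts at $W_0 = 1$ and satisfies $\zeta_1 = \inf\{t : W_t = 0\}$, by dominating it from above by a Bessel-type process of effective dimension at most $2 - 2\epsilon$, and then invoking the classical fact that such a Bessel hits $0$ in bounded expected time. Set $V_t = O_t^+ - \xi_t$ and $U_t = \xi_t - O_t^-$. Since $O_0^+ \le 1$ and $g_t$ extends continuously and monotonically to $\R\setminus K_t$ (with the continuation of the right force point staying the image under $g_t$ of a real point $\le 1$ until $\zeta_1$), one has $V_t \le W_t$ for all $t < \zeta_1$.

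The first step is to guarantee that $U_t$ stays far from $0$ on $[0, t_\delta]$, so that the $\rho_-$ force term contributes negligibly. From $dU_t = \sqrt{\kappa}\,dB_t + (\rho_-+2)/U_t\,dt - \rho_+/V_t\,dt$ both drift terms are non-negative (using $\rho_- > -2$ and $\rho_+ \le 0$), so $U_t \ge C_\delta + \sqrt{\kappa}\,B_t$. A Brownian maximum estimate shows that for $C_\delta$ chosen large enough in terms of $(\epsilon, t_\delta, \delta)$, the event $G := \{\inf_{[0,t_\delta]} U_t \ge C_\delta/2\}$ has probability at least $1 - \delta/2$. On $G$, the term $|\rho_-/U_t| \le 2|\rho_-|/C_\delta$ is uniformly small, while $\rho_+/V_t \le \rho_+/W_t$ (using $V \le W$ and $\rho_+ \le 0$). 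Substituting into $dW_t = 2/W_t\,dt - d\xi_t$ and expanding $d\xi_t$ via \eqref{eq:slerhosde} then gives, on $G$ and for $t < \zeta_1 \wedge t_\delta$,
\[dW_t \le \frac{2+\rho_+}{W_t}\,dt + \frac{2|\rho_-|}{C_\delta}\,dt - \sqrt{\kappa}\,dB_t.\]

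By a standard SDE comparison (most cleanly justified by working with the squared processes, which have Lipschitz drift on $[0,\infty)$), $W_t \le Z_t$ on $G$ for $t < \zeta_1 \wedge t_\delta$, where $Z$ is the strong solution of the above SDE with equality and $Z_0 = 1$. This $Z$ is, up to the constant drift perturbation, exactly $\sqrt{\kappa}$ times a Bessel process of dimension $d_+ = 1 + 2(\rho_+ + 2)/\kappa \le 2 - 2\epsilon$. I would first choose $t_\delta$ so that an unperturbed Bessel of dimension $2 - 2\epsilon$ started at $1/\sqrt{\kappa}$ hits $0$ by time $t_\delta$ with probability at least $1 - \delta/4$; this is possible because $\E[\tau_0] = 1/(2\kappa(2-d_+)) \le 1/(4\kappa\epsilon) \le 1/(4\epsilon^2)$ is uniformly bounded, so Markov's inequality suffices. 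I would then fix $C_\delta$ so large that the constant drift perturbation $2|\rho_-|/C_\delta$ contributes negligibly, which can be made quantitative via an Ito computation on $Z_t^2$. Combining $\P(G^c) \le \delta/2$ with the bound on $\P(Z > 0 \text{ on } [0, t_\delta])$ yields $\P(\zeta_1 > t_\delta) \le \delta$.

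The main obstacle is this last step: controlling the positive drift perturbation $2|\rho_-|/C_\delta$, which pushes $Z$ upwards and therefore slows its hitting of $0$, to show that it does not overwhelm the sub-critical Bessel tendency to hit the origin. This is handled by choosing the parameters in the correct order -- $t_\delta$ first from the unperturbed Bessel estimate, then $C_\delta$ large enough that the cumulative perturbation over $[0, t_\delta]$ is much smaller than the starting value $1$ of $Z$ -- so that the perturbed process remains quantitatively close to the pure Bessel over the relevant time window.
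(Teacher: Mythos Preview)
Your overall strategy is sound and parallels the paper's: both reduce to comparing $g_t(1)-\xi_t$ with a subcritical Bessel process after first neutralizing the contribution of the left force point. The difference lies in how that neutralization is done. The paper removes the $\rho_-$ term entirely via a Girsanov change of measure (to the law with $\rho_-=0$), stopped when $\xi-O^-$ drops to $C_\delta/2$; the Radon--Nikodym density then has an $L^2$ bound of the form $\exp\bigl(t_\delta\,\rho_-^2/((C_\delta/2)^2\kappa)\bigr)$, and Cauchy--Schwarz converts the problem into estimating the hitting time for a pure Bessel of dimension at most $2(1-\epsilon)$, together with a Brownian estimate for $\P(\tilde T\le t_\delta)$. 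You instead keep $\rho_-$ and bound its effect pathwise on the good event $G$, which leads you to a Bessel perturbed by a constant positive drift $c=2|\rho_-|/C_\delta$.

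There is, however, a genuine gap in your final step. You correctly observe that in the natural coupling one has $0\le Z_t-Z'_t\le ct\le ct_\delta$, so the perturbed process stays within $ct_\delta$ of the unperturbed Bessel $Z'$. But closeness in sup norm does \emph{not} transfer to closeness of hitting times of $0$: when $Z'$ hits $0$ at some $\tau'\le t_\delta$, all you know is $Z_{\tau'}\le ct_\delta$, and a subcritical Bessel (let alone one with an upward drift) starting from a small positive value has no uniform lower bound on the probability of reaching $0$ in the remaining time $t_\delta-\tau'$. Your suggested ``It\^o computation on $Z_t^2$'' does not obviously yield a bound on $\P(\inf_{[0,t_\delta]}Z>0)$ either, since it only controls moments of $Z$, not the probability of avoiding zero. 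The cleanest repair is to apply Girsanov to remove the constant drift $c$ from $Z$: the density $\exp\bigl((c/\sqrt\kappa)\tilde B_{t_\delta}-c^2t_\delta/(2\kappa)\bigr)$ has $L^2$ norm $\exp(c^2t_\delta/(2\kappa))$, and Cauchy--Schwarz gives
\[
\P\bigl(\inf\nolimits_{[0,t_\delta]}Z>0\bigr)\le e^{c^2t_\delta/(2\kappa)}\,\P\bigl(\inf\nolimits_{[0,t_\delta]}Z'>0\bigr)^{1/2},
\]
which is small once $t_\delta$ is fixed from the pure Bessel estimate and then $C_\delta$ is taken large. This is essentially the paper's argument applied one level down; the paper avoids the detour by Girsanoving away $\rho_-$ at the outset rather than first reducing it to a bounded drift.
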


\begin{proof}
	Let $(\tilde{\xi},\tilde{O}^\pm)$ solve the SDE \eqref{eq:slerhosde} with $(\kappa,0,\rho_+)$ in place of $(\kappa,\rho_-,\rho_+)$ and $(\tilde{\xi},\tilde{O}^\pm)$ in place of $(\xi,O^\pm)$. Let $\tilde{g}$ and $(\tilde{\zeta}_z)$ be the mapping out functions and swallowing times associated to $\tilde{\xi}$. Also let
	\begin{align*}
		T = \inf\{t\ge 0\colon \xi_t-O_t^-=C_\delta/2\}\quad\text{and}\quad \tilde{T} = \inf\{t\ge 0\colon \tilde{\xi}_t-\tilde{O}_t^-=C_\delta/2\}\;.
	\end{align*}
	Using Girsanov's theorem
	\begin{align*}
		\P(\zeta_1\ge t_\delta) &\le \P(T\le t_\delta) + \P(\zeta_1\ge t_\delta,T>t_\delta)\\
		&= \E(E_{t_\delta};\tilde{T}\le t_\delta) + \E(E_{t_\delta};\tilde{\zeta}_1\ge t_\delta, \tilde{T}>t_\delta) \\
		&\le \E(E_{t_\delta}^2)^{1/2}\left( \P(\tilde{T}\le t_\delta)^{1/2}+  \P(\tilde{\zeta}_1\ge t_\delta)^{1/2} \right)
	\end{align*}
	where
	\begin{align*}
		E_{t_\delta} &= \exp\left(\int_0^{t_\delta\wedge \tilde{T}} \frac{\rho_-/\sqrt{\kappa}}{\tilde{\xi}_s-\tilde{O}_s^-}\,dB_s - \frac{1}{2} \int_0^{t_\delta\wedge \tilde{T}} \left(\frac{\rho_-/\sqrt{\kappa}}{\tilde{\xi}_s-\tilde{O}^-_s}\right)^2\,ds \right) \quad\text{and hence}\\
		E_{t_\delta}^2 &= \exp\left(\int_0^{t_\delta\wedge \tilde{T}} \frac{2\rho_-/\sqrt{\kappa}}{\tilde{\xi}_s-\tilde{O}_s^-}\,dB_s - \frac{1}{2} \int_0^{t_\delta\wedge \tilde{T}} \left(\frac{2\rho_-/\sqrt{\kappa}}{\tilde{\xi}_s-\tilde{O}^-_s}\right)^2\,ds +  \int_0^{t_\delta\wedge \tilde{T}} \left(\frac{\rho_-/\sqrt{\kappa}}{\tilde{\xi}_s-\tilde{O}^-_s}\right)^2\,ds \right)\\
		&\le \exp\left(\frac{t_\delta \rho_-^2}{(C_\delta/2)^2\kappa}\right)\exp\left(\int_0^{t_\delta\wedge \tilde{T}} \frac{2\rho_-/\sqrt{\kappa}}{\tilde{\xi}_s-\tilde{O}_s^-}\,dB_s - \frac{1}{2} \int_0^{t_\delta\wedge \tilde{T}} \left(\frac{2\rho_-/\sqrt{\kappa}}{\tilde{\xi}_s-\tilde{O}^-_s}\right)^2\,ds \right)\;.
	\end{align*}
	From optional stopping we deduce that $\E(E_{t_\delta}^2)\le \exp(t_\delta(2\vee \epsilon^{-1})^2/((C_\delta/2)^2\epsilon))$. Then using that $\rho_+\le 0$ we get that a.s.
	\begin{align*}
		\tilde{g}_t(1)-\tilde{\xi}_t &= 1-\sqrt{\kappa}B_t -\int_0^t \frac{\rho_+\,ds}{\tilde{\xi}_s-\tilde{O}^+_s} + \int_0^t \frac{2\,ds}{\tilde{g}_s(1)-\tilde{\xi}_s} \\
		&\le 1-\sqrt{\kappa}B_t + \int_0^t \frac{(2+\rho_+)\,ds}{\tilde{g}_s(1)-\tilde{\xi}_s}\quad\text{for all $t<\tilde{\zeta}_1$}\;.
	\end{align*}
	By comparison theorems for SDE we can couple $\tilde{g}(1)-\tilde{\xi}$ with a Bessel process $X$ of dimension $\delta=1+2/\kappa\cdot (2+\rho_+)$ starting from $1/\sqrt{\kappa}$ such that $\tilde{g}(1)-\tilde{\xi}\le \sqrt{\kappa}X\le X/\sqrt{\epsilon}$ on the interval $[0,\tilde{\zeta}_1)$. Since $\delta\le 2(1-\epsilon)$ and $1/\sqrt{\kappa}\le 1/\sqrt{\epsilon}$, we can couple $X$ with a Bessel process $X'$ of dimension $2(1-\epsilon)$ started from $1/\sqrt{\epsilon}$ such that $X\le X'$ a.s. Hence
	\begin{align*}
		\P(\tilde{\zeta}_1\ge t_\delta) \le \P(X'/\sqrt{\epsilon}>0 \text{ on } [0,t_\delta))\;.
	\end{align*}
	Moreover, since $\tilde{O}^-$ is non-increasing, $\tilde{O}^-_0\le -C_\delta$ and $\tilde{\xi}\ge \sqrt{\kappa}B$ (the last statement is immediate from the defining SDE) we get
	\begin{align*}
		\P(\tilde{T}\le t_\delta) &=\P(\inf\nolimits_{[0,t_\delta]} (\tilde{\xi}-\tilde{O}^-)\le C_\delta/2) \le \P(\inf\nolimits_{[0,t_\delta]}\tilde{\xi}\le -C_\delta/2)\\
		&\le \P(\inf\nolimits_{[0,t_\delta]} B \le -C_\delta/(2\sqrt{\kappa})) \le  \P(\inf\nolimits_{[0,t_\delta]} B \le -C_\delta\sqrt{\epsilon}/2)\;.
	\end{align*}
	The result now follows by first choosing $t_\delta$ and then $C_\delta$ sufficiently large.
\end{proof}

\begin{proof}[Proof of Proposition \ref {prop:boosted-sle-conv}] 
	By Proposition \ref{prop:sle-rho-convergence} and Skorokhod's representation theorem, there are processes $(\tilde{\xi}^n,\tilde{O}^{n\pm})$ such that $(\xi^n,O^{n\pm})=_d (\tilde{\xi}^n,\tilde{O}^{n\pm})$ for all $n\le \infty$ and such that $(\tilde{\xi}^n,\tilde{O}^{n\pm})\to (\tilde{\xi}^\infty,\tilde{O}^{\infty\pm})$ as $n\to\infty$ uniformly on compacts a.s. Let $(\tilde{g}^n)$ and $(\tilde{\zeta}^n_z)$ be the mapping out functions and swallowing times associated to $(\tilde{\xi}^n)$. By Lemma \ref{lem:loewner-stuff} we have
	\begin{align}
		\label{eq:lower-loewner-swallowing}
		\tilde{\zeta}^\infty_x \le \liminf_{n\to \infty} \tilde{\zeta}^n_x\;.
	\end{align}
	Let $\tilde{T}^n_\epsilon = \inf\{t\ge 0\colon \tilde{g}^n(x)-\tilde{\xi}^n_t=\epsilon\}$ for $0<\epsilon<x/2$. Note first that $\tilde{T}^n_\epsilon < \tilde{\zeta}^n_x$. For fixed $n\le \infty$, $-\tilde{O}^{n-}$ and $\tilde{g}^n(x)$ are non-decreasing a.s.\ as can be seen from the defining SDE. Thus
	\begin{align*}
		\tilde{g}^n_{\tilde{T}^n_\epsilon}(x) - \tilde{\xi}^n_{\tilde{T}^n_\epsilon}&= \epsilon\;,\\
		\tilde{\xi}^n_{\tilde{T}^n_\epsilon} - \tilde{O}^{n-}_{\tilde{T}^n_\epsilon}&=  (\tilde{\xi}^n_{\tilde{T}^n_\epsilon} - \tilde{g}^n_{\tilde{T}^n_\epsilon}(x)) + \tilde{g}^n_{\tilde{T}^n_\epsilon}(x) - \tilde{O}^{n-}_{\tilde{T}^n_\epsilon} \ge -\epsilon+x\ge x/2
	\end{align*}
	on $\{\tilde{T}^n_\epsilon<\infty\}$ a.s. By the strong Markov property of $(\tilde{\xi}^n,\tilde{O}^{n\pm})$ at time $\tilde{T}^n_\epsilon$, its Brownian scaling property and Lemma \ref{lem:hit-boundary-fast}, we can therefore deduce that
	\begin{align*}
		\sup_{n\ge 1} \P(\tilde{\zeta}^n_x-\tilde{T}^n_\epsilon>\delta)\to 0\quad\text{as $\epsilon\to 0$ for all $\delta>0$}\;.
	\end{align*}
	Thus for $\delta>0$ and $\epsilon\in (0,x/2)$ we get
	\begin{align*}
		\P(\tilde{\zeta}^n_x- \tilde{\zeta}^\infty_x> 2\delta) &\le \P(\tilde{\zeta}^n_x- \tilde{T}^n_\epsilon> \delta) + \P(\tilde{T}^n_\epsilon-\tilde{\zeta}^\infty_x\ge \delta) \\
		&\le \sup_{m\ge 1} \P(\tilde{\zeta}^m_x-\tilde{T}^m_\epsilon>\delta) + \P(\tilde{T}^n_\epsilon>\tilde{T}^\infty_{\epsilon/2})\;.
	\end{align*}
	We have $\tilde{T}^n_\epsilon\ge \tilde{T}^\infty_{\epsilon/2}$ for only finitely many $n$ a.s.\ and by \eqref{eq:lower-loewner-swallowing} we thus get $\tilde{\zeta}^n_x\to \tilde{\zeta}^\infty_x$ in probability as $n\to \infty$. The claim follows.
\end{proof}

We will need one additional statement in the final stages of the proof of the main theorem, that we now state and prove:

\begin{cor}
	\label{cor:half-plane-dist-conv}
	Suppose that $\kappa^n \in (8/3,4]$ and $\rho^n\in (-2,\kappa^n/2-2)$ for $1\le n\le \infty$ be such that $(\kappa^n,\rho^n)\to (\kappa^\infty,\rho^\infty)$ as $n\to \infty$. For $n\le \infty$, let $\gamma^n \sim \SLE_{\kappa^n}(0,\rho^n)$ and $\tilde{K}^n$ be the chordal hull associated to $(\psi\circ\gamma^n)([0,\infty))$ where $\psi\colon \H\to \H$ is the Möbius transformation mapping $(0,-1,\infty)$ to $(0,\infty,1)$. Then $\hcap(\tilde{K}^n) \to \hcap(\tilde{K}^\infty)$ in distribution as $n\to \infty$.
\end{cor}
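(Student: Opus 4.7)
The plan is to combine Proposition~\ref{prop:sle-rho-convergence}, Lemma~\ref{lem:loewner-stuff} and the explicit capacity formula of Lemma~\ref{lem:change-hcap}. Applying Proposition~\ref{prop:sle-rho-convergence} together with Skorokhod's representation theorem, I may realise everything on a single probability space so that $(\xi^n,O^{n\pm})\to(\xi^\infty,O^{\infty\pm})$ uniformly on compacts of $[0,\infty)$ almost surely. Since $\kappa^n\le 4$ the curves $\gamma^n$ are simple and do not swallow any point of $(-\infty,0)$, so $\zeta^n_{-1}=\infty$ for every $n\le\infty$; Lemma~\ref{lem:loewner-stuff} then shows that $g^n_\cdot(-1)$ and $(g^n_\cdot)'(-1)$ converge to $g^\infty_\cdot(-1)$ and $(g^\infty_\cdot)'(-1)$ uniformly on each compact subset of $[0,\infty)$ almost surely.

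The Möbius transformation of the corollary is $\psi(z)=z/(z+1)$, so Lemma~\ref{lem:change-hcap} (applied with $x=y=1$) yields
\begin{align*}
    \hcap(\tilde K^n_T)=2\int_0^T\left(\frac{(g^n_s)'(-1)}{(g^n_s(-1)-\xi^n_s)^2}\right)^2 ds,
\end{align*}
a quantity monotone increasing in $T$ with $\lim_{T\uparrow\infty}\hcap(\tilde K^n_T)=\hcap(\tilde K^n)$. For each fixed $T$ the integrand converges uniformly on $[0,T]$ while the denominator stays bounded away from $0$, so bounded convergence gives $\hcap(\tilde K^n_T)\to\hcap(\tilde K^\infty_T)$ almost surely. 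The remaining issue is a uniform tail estimate: for every $\epsilon>0$,
\begin{align*}
    \lim_{T\to\infty}\sup_n\P\bigl(\hcap(\tilde K^n)-\hcap(\tilde K^n_T)>\epsilon\bigr)=0,
\end{align*}
after which the compact-time convergence gives $\hcap(\tilde K^n)\to\hcap(\tilde K^\infty)$ in probability on the Skorokhod space, hence in distribution.

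For the tail I would exploit the identity $|\psi(z)-1|=1/|z+1|$, which forces $\psi(\gamma^n([T,\infty)))\subset\overline{B(1,1/d^n_T)}\cap\H$, where $d^n_T:=\mathrm{dist}(\gamma^n([T,\infty)),-1)$. Combining the half-plane-capacity additivity
\begin{align*}
    \hcap(\tilde K^n)-\hcap(\tilde K^n_T)=\hcap\bigl(\tilde g^n_T(\tilde K^n\setminus\tilde K^n_T)\bigr)
\end{align*}
with the fact that $\tilde g^n_T$ extends analytically across $1\notin\tilde K^n_T$, one obtains a bound of the form $C^n_T/(d^n_T)^2$, where $C^n_T$ is a random boundary-derivative factor of $\tilde g^n_T$ near $1$. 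Tightness of $C^n_T$ in $n$ for each fixed $T$ follows by writing $\tilde g^n_T$ as a composition of $\psi$, $g^n_T$ and an auxiliary Möbius map and invoking the Loewner stability of the first paragraph.

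The main obstacle is showing that $d^n_T\to\infty$ in probability as $T\to\infty$, uniformly in $n$, i.e.\ a quantitative uniform transience statement for the family $\{\gamma^n\}$. I expect to obtain it by combining the half-plane-capacity lower bound $\sup_{z\in K^n_T}|z|\ge \sqrt{2T}$ with a Bessel-type comparison for the boundary process $Y^n_s:=g^n_s(-1)-\xi^n_s$, which after Girsanov absorbing the force-point drifts follows a Bessel SDE of dimension $1+4/\kappa^n$, together with Koebe boundary distortion applied to $(g^n_T)^{-1}$ at the real point $g^n_T(-1)$. Since $\kappa^n$ stays in a compact subset of $(8/3,4]$ and the force-point drifts are controlled by Proposition~\ref{prop:sle-rho-convergence}, this Bessel comparison can be made uniform in $n$, delivering the required tail decay and completing the proof.
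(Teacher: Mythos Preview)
Your outline has the right structure---fix a cutoff, show convergence up to the cutoff via Lemma~\ref{lem:loewner-stuff} and Lemma~\ref{lem:change-hcap}, then control the tail uniformly---but the tail step is a genuine gap, not just a routine detail.

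The problem is that the two ingredients you list do not combine to give what you need. You claim a bound of the form $C^n_T/(d^n_T)^2$ and then argue separately that $C^n_T$ is tight in $n$ for each fixed $T$ and that $d^n_T\to\infty$ in probability uniformly in $n$ as $T\to\infty$. But $C^n_T$ involves the boundary derivative of $\tilde g^n_T$ near $1$, and since $\psi\circ\gamma^n$ is a curve \emph{ending} at $1$, the hulls $\tilde K^n_T$ accumulate at $1$ as $T\to\infty$; the derivative you are trying to control therefore blows up with $T$. Tightness in $n$ for fixed $T$ says nothing about this growth, so the ratio $C^n_T/(d^n_T)^2$ need not go to zero. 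As for the uniform transience of $d^n_T$ itself, your sketch (``Bessel comparison for $g^n_s(-1)-\xi^n_s$ plus Koebe'') controls the conformal-radius quantity $\xi^n_T-g^n_T(-1)$, but passing from that to the Euclidean distance $\mathrm{dist}(\gamma^n([T,\infty)),-1)$ again requires the boundary derivative $(g^n_T)'(-1)$, which is exactly the object whose decay you have not quantified. There is a circularity here that your argument does not break.

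The paper avoids this by choosing a different cutoff. Instead of a deterministic time $T$, it stops at the swallowing time $\zeta^n_x$ of a point $x>0$ on the \emph{right}; Proposition~\ref{prop:boosted-sle-conv} (which strengthens Proposition~\ref{prop:sle-rho-convergence} precisely by adding the convergence $\zeta^n_x\to\zeta^\infty_x$, via Lemma~\ref{lem:hit-boundary-fast}) makes the finite part converge. At that stopping time one has the deterministic bound $\xi^n_{\zeta^n_x}-g^n_{\zeta^n_x}(-1)\ge x+1$ and $(g^n_{\zeta^n_x})'(-1)\le 1$, so by the strong Markov property and scaling the tail $E^n_x$ is stochastically dominated by $(x+1)^{-2}\hcap(\tilde K^n)$. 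The remaining issue---that $\hcap(\tilde K^n)$ is tight uniformly in $n$---is handled by an external input (Lemma~\ref{lem:brownian-loopsoup-error}), which couples each $\gamma^n$ to lie to the right of a single $\SLE_4(0,\rho')$ curve for a large fixed $\rho'$, using the restriction-type monotonicity from \cite{werner-wu-cle-sle}. That coupling is the missing idea in your approach: it supplies the uniform-in-$n$ control that you were trying to manufacture from the SDE alone.
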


Before proving this, let us first state and prove the following lemma:

\begin{lemma}
	\label{lem:brownian-loopsoup-error}
	For all $\epsilon>0$ there exists $C_\epsilon>0$ such that the following is true: Let $\kappa\in (8/3,4]$, $\rho\in (-2,0]$ and $\eta\sim \SLE_\kappa(0,\rho)$. Also consider the Möbius transformation $\psi\colon \H\to \H$ which maps $(0,-1,\infty)$ to $(0,\infty,1)$ and we let $\tilde{K}$ be the chordal hull associated to $(\psi\circ\eta)([0,\infty))$. Then $\P(\hcap(\tilde{K})\ge C_\epsilon) \le \epsilon$.
\end{lemma}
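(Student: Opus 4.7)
The strategy is to combine a pathwise geometric bound relating $\hcap(\tilde K)$ to the Euclidean distance $\mathrm{dist}(\eta, -1)$ with a uniform tail estimate on this distance. For the geometric step, the identity $\psi(z) - 1 = -1/(z+1)$ gives $|\psi(z) - 1| = 1/|z+1|$, so on the event $\{\mathrm{dist}(\eta, -1) \ge \delta\}$ one has $\tilde\eta = \psi \circ \eta \subseteq \overline{B(1, 1/\delta)} \cap \overline{\H}$. Since the complement of this closed half-disk in $\overline{\H}$ is connected and unbounded, every bounded component of $\overline{\H} \setminus \tilde\eta$ also lies in the half-disk, and hence so does $\tilde K$. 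By monotonicity of $\hcap$ together with the standard value $\hcap\bigl(\overline{B(a,r)} \cap \overline{\H}\bigr) = r^2/2$, this yields the pathwise estimate $\hcap(\tilde K) \le 1/(2\delta^2)$ on this event.

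It therefore suffices to show that for every $\epsilon > 0$ there exists $\delta > 0$ with $\P(\mathrm{dist}(\eta, -1) < \delta) \le \epsilon$ uniformly in $(\kappa, \rho) \in (8/3, 4] \times (-2, 0]$. To remove the dependence on $\rho$, I would invoke the standard SLE monotonicity in the right force-point weight: by the imaginary geometry couplings of \cite{ig1}, for each $\kappa$ and each $\rho \in (-2, 0]$ one can couple $\eta \sim \SLE_\kappa(0, \rho)$ with $\eta_0 \sim \SLE_\kappa$ so that $\eta$ lies in the right connected component of $\H \setminus \eta_0$. Then $\eta_0$ separates $\eta$ from $-1$ in $\overline{\H}$, and any continuous path from $\eta$ to $-1$ must cross $\eta_0$, which yields $\mathrm{dist}(\eta, -1) \ge \mathrm{dist}(\eta_0, -1)$ almost surely in this coupling. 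The problem reduces to controlling $\P(\mathrm{dist}(\eta_0, -1) < \delta)$ uniformly in $\kappa \in (8/3, 4]$.

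For this final uniform estimate, the idea is to combine the pointwise decay to $0$ as $\delta \downarrow 0$ (which follows from the fact that $\SLE_\kappa$ for $\kappa \le 4$ almost surely stays off the real line) with a continuity argument in $\kappa$. Proposition \ref{prop:sle-rho-convergence} (with $\rho_\pm = 0$) yields Carathéodory continuity of $\eta_0$ in $\kappa$, from which one derives upper semicontinuity in $\kappa$ of $f_\delta(\kappa) := \P(\mathrm{dist}(\eta_0, -1) < \delta)$ on the closed interval $[8/3, 4]$. A Dini-type argument, which extends to upper semicontinuous monotone sequences on a compact set, then upgrades the pointwise convergence $f_\delta \downarrow 0$ to uniform convergence in $\kappa$, and one concludes by taking $C_\epsilon := 1/(2\delta^2)$ for the corresponding $\delta$.

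The main anticipated obstacle is the upper semicontinuity step, since Carathéodory convergence of driving functions does not immediately control Hausdorff distances of the generated curves near their target point at $\infty$. A robust alternative that avoids this subtlety is to bound $f_\delta$ directly by optional-stopping an explicit boundary-point martingale of the form $g_t'(-1)^a (\xi_t - g_t(-1))^b$ for $\SLE_\kappa$ at the first time the tip enters $B(-1, \delta)$, combined with Koebe's quarter theorem to relate the capacity quantity $g_t'(-1) / (\xi_t - g_t(-1))$ to the Euclidean distance to $-1$; this produces a polynomial-in-$\delta$ bound with constant continuous in $\kappa \in [8/3, 4]$, from which the required uniformity is immediate.
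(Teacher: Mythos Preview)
Your approach is correct in outline but takes a genuinely different and more laborious route than the paper's. The paper's proof is essentially a one-line reduction: by the loop-soup/restriction coupling of \cite{werner-wu-cle-sle}, one can fix a single curve $\eta_-\sim \SLE_4(0,\rho')$ with $\rho'$ chosen large enough that $\eta_-$ lies to the left of $\SLE_\kappa(0,\rho)$ \emph{simultaneously for all} $(\kappa,\rho)\in(8/3,4]\times(-2,0]$ (the relevant inequality $(\rho+2)((\rho+6)/\kappa-1)\le(\rho'+2)((\rho'+6)/4-1)$ holds uniformly once $\rho'$ is large). Since the law of $\eta_-$ is fixed and $\hcap$ of its image under $\psi$ is a.s.\ finite, the uniform tail bound follows immediately.

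Your argument instead splits the uniformity into two separate steps (monotonicity in $\rho$ at fixed $\kappa$, then uniformity in $\kappa$), which forces you to confront the uniform-in-$\kappa$ one-point estimate directly. Your step~1 (the geometric bound $\hcap(\tilde K)\le 1/(2\,\mathrm{dist}(\eta,-1)^2)$) is clean and correct. Step~2 is also correct, but note that the coupling you want compares flow lines of two \emph{different} GFFs (whose boundary data differ by a nonnegative harmonic function), which is not the same-field angle-monotonicity most readily quoted from \cite{ig1}; the quickest justification is in fact the same Werner--Wu coupling the paper uses, specialised to $\kappa'=\kappa$ and $\rho'=0$. For step~3, your semicontinuity/Dini route is delicate for exactly the reason you flag (Carath\'eodory convergence does not control Hausdorff distance near the target), while the martingale alternative does work and yields $\P(\mathrm{dist}(\eta_0,-1)<\delta)\lesssim \delta^{(8-\kappa)/\kappa}$ with constant continuous in $\kappa\in[8/3,4]$---but writing this out carefully is more work than the paper's single domination, which sidesteps the issue entirely.
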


\begin{proof}[Proof of Lemma \ref {lem:brownian-loopsoup-error}]
	As stated in the discussion below \cite[Theorem 2.1]{werner-wu-cle-sle}, whenever $\rho,\rho'>-2$ and $\kappa,\kappa'\in (8/3,4]$ are such that $\kappa\le \kappa'$ and so that they satisfy
	\begin{align}
		\label{eq:domination-eq}
		(\rho+2)\left(\frac{\rho+6}{\kappa}-1\right)\le (\rho'+2)\left(\frac{\rho'+6}{\kappa'}-1\right)\;,
	\end{align}
	one can couple $\eta_-\sim \SLE_{\kappa'}(0,\rho')$ with $\eta_+\sim \SLE_{\kappa}(0,\rho)$ such that $\eta_-$ lies left of $\eta_+$ (note that in \cite{werner-wu-cle-sle} the force point is located left of the starting point in contrast to the formulation here). We take $\kappa'=4$ and $\rho'$ sufficiently large so that the identity \eqref{eq:domination-eq} above holds for all $\kappa\in (8/3,4]$ and $\rho\le 0$. Let $L_\pm$ denote the chordal hull associated to $(\psi\circ \eta_\pm)([0,\infty))$. Then
	\begin{align*}
		\P(\hcap(\tilde{K})\ge C_\epsilon) = \P(\hcap(L_+)\ge C_\epsilon) \le \P(\hcap(L_-)\ge C_\epsilon)\;.
	\end{align*}
	The result follows since $\hcap(L_-)<\infty$ a.s.
\end{proof}

\begin{proof}[Proof of Corollary \ref {cor:half-plane-dist-conv}]
	By Proposition \ref{prop:boosted-sle-conv} and Skorokhod’s representation theorem, for each $x>0$ we can can consider a coupling of the curves $(\gamma^n)$ such that $\xi^n\to \xi^\infty$ uniformly on compacts a.s.\ and $\zeta^n_x\to \zeta^\infty_x$ a.s.\ as $n\to \infty$ where $\xi^n$ is the driving function of $\gamma^n$ and $\zeta^n$ its swallowing time of the point $x$ whenever $n\le \infty$. Therefore, writing $(g^n)$ for the mapping out functions associated to the driving functions $(\xi^n)$,
	\begin{align*}
		\int_0^{\zeta^n_x} \left(\frac{(g^n)_t'(-1)}{(g^n_t(-1)-\xi^n_t)^2}\right)^2\,dt \to \int_0^{\zeta^\infty_x} \left(\frac{(g^\infty)_t'(-1)}{(g^\infty_t(-1)-\xi^\infty_t)^2}\right)^2\,dt \quad\text{a.s.\ as $n\to \infty$}\;.
	\end{align*}
	Moreover, by Lemma \ref{lem:change-hcap} we get
	\begin{align*}
		E^n_x &:=\hcap(\tilde{K}^n) - \int_0^{\zeta^n_x} \left(\frac{(g^n)_t'(-1)}{(g^n_t(-1)-\xi^n_t)^2}\right)^2\,dt = \int_0^\infty \left(\frac{(g^n)_{t+\zeta^n_x}'(-1)}{(g^n_{t+\zeta^n_x}(-1)-\xi^n_{t+\zeta^n_x})^2}\right)^2\,dt \\
		&= (g^n_{\zeta^n_x})'(-1)^2\int_0^\infty \left(\frac{(g^n_{t+\zeta^n_x}\circ (g^n_{\zeta^n_x})^{-1})'(g^n_{\zeta^n_x}(-1))}{((g^n_{t+\zeta^n_x}\circ (g^n_{\zeta^n_x})^{-1})(g^n_{\zeta^n_x}(-1))-\xi^n_{t+\zeta^n_x})^2}\right)^2\,dt\;.
	\end{align*}
	Since $0\le (g^n_{\zeta^n_x})'(-1)\le 1$ and
	\begin{align*}
		\xi^n_{\zeta^n_x} - g^n_{\zeta^n_x}(-1) \ge \lim_{t\to \zeta^n_x} g_t(x) -g^n_{\zeta^n_x}(-1) \ge x+1
	\end{align*}
	we obtain that $(x+1)^{-2}\hcap(\tilde{K}^n)$ stochastically dominates the error term $E^n_x$; indeed, this follows from the strong Markov property of $\SLE_{\kappa^n}(0,\rho^n)$ and again Lemma \ref{lem:change-hcap}. This stochastic domination implies by Lemma \ref{lem:brownian-loopsoup-error} that for all $\delta>0$,
	\begin{align*}
		\sup_{n\ge 1}\P(E^n_x>\delta) \to 0\quad\text{as $x\to\infty$}\;.
	\end{align*}
	The result then follows directly.
\end{proof}

\section{The case of generalized SLEs with force points} 
\label{sec:gensle}

In some sense, the results in the previous section allow us to control how the law of the trunk of a $\CLE_\kappa$ exploration depends on $\kappa$ and an asymmetry parameter (in particular, when we consider the limit $\kappa \uparrow 4$).

We now turn our focus to the study of $\SLE_\kappa^\beta (\kappa-6)$ (for $\kappa < 4$) and $\SLE_4^{\langle \mu\rangle}(-2)$ processes which are the $\CLE_\kappa$ exploration processes themselves. The Loewner chain definition of these processes relies on local times and the excursion theory of Bessel processes. In order to control the dependence in $\kappa$, $\beta$ and $\mu$, we first have to review carefully these constructions.

Except for the very end of this section, we will derive facts of Bessel processes without making any reference to SLE curves.  

\subsection {Background on Bessel processes and their local times} 

The usual reference on the subject of Bessel processes is \cite[Chapter XI]{revuz-yor} but we will present a mostly self-contained treatment here; in particular, we will use the construction of the local times at $0$ given in \cite{donati-yor-bessel} (this forms the first part of the proof of Proposition \ref{prop:localdef}) to construct the whole local time field using stochastic calculus. We then have to make use of two constructions of compensated integrals which can be found in \cite{shef-cle, werner-wu-explorations}.

For the reader with detailed knowledge of Bessel processes, the results in this subsection (i.e.\ up to Proposition \ref{prop:excursion-int}) might appear standard but it seemed useful (or even necessary) to provide a complete exposition of the different constructions of compensated integrals and their equivalence (not least because the multiplicative convention of local times plays a key role in our setting).

Let $B$ be a standard Brownian motion and $\delta > 0$. For $x_0\ge 0$ and $y_0=x_0^2$ the following SDE (called \emph{squared Bessel SDE}) has a unique strong solution
\begin{align*}
	\begin{cases}
		dY(\delta)_t = 2\sqrt{|Y(\delta)_t|}\,dB_t + \delta\,dt\;,\\
		Y(\delta)_0 = y_0\;.
	\end{cases}
\end{align*}
In fact, $Y(\delta)\ge 0$ a.s.\ and we will work with a version for which this holds surely. The law of $Y(\delta)$ is called the \emph{squared Bessel process} of dimension $\delta$ started from $y_0$. We let $X(\delta)= \sqrt{Y(\delta)}$ and call the law of $X(\delta)$ a \emph{Bessel process} of dimension $\delta$ started from $x_0$ and we will always work with the above coupling to a Brownian motion $B$. If $\sigma(\delta)_r = \inf\{t\ge 0\colon Y(\delta)_t = r\}$ for $0<r<y_0$, then by Itô's formula, $(Y(\delta)^{\sigma(\delta)_r})^{1-\delta/2}$ is a local martingale and this easily implies that $Y(\delta)$ hits $0$ a.s.\ when $\delta<2$ and does not hit $0$ a.s.\ when $\delta\ge 2$ (and $y_0>0$). For later reference, we note that for $t>0$, the law of $X(\delta)_t$ (with $X(\delta)$ started from $x_0$) has a density $p^\delta_t(x_0,\cdot)$ on $(0,\infty)$ given by
\begin{align}
	\label{eq:besseldens}
	\begin{split}
		p^\delta_t(x_0,x)&= \frac{(x/x_0)^{\delta/2-1}x}{t}\,e^{-(x^2+x_0^2)/(2t)}\,I_{\delta/2-1}\left(\frac{x\cdot x_0}{t}\right)\quad\text{for $x_0>0$}\;,\\
		p^\delta_t(0,x) &= \frac{x^{\delta-1}}{2^{\delta/2-1}\, t^{\delta/2}\,\Gamma(\delta/2)}\,e^{-x^2/(2t)}\;.
	\end{split}
\end{align}
Here $I_{\delta/2-1}$ is the modified Bessel function of the first kind with index $\delta/2-1$. This result can for instance be found in the remarks below \cite[Chapter XI, Definition 1.9]{revuz-yor}. Let us now construct local times of Bessel processes. As mentioned above, the construction of $\ell(\delta)$ in the proof below is the one given in \cite{donati-yor-bessel}.

\begin{prop}
	\label{prop:localdef}
	Suppose that $\delta\in (0,2)$. Then $X(\delta)^{2-\delta}$ is a semimartingale with a local time process $l$ such that $(z,t)\mapsto l^z(\delta)_t$ is locally Hölder continuous of index $1/2-\epsilon$ for all $\epsilon>0$ and non-decreasing in $t$ for each fixed $x\ge 0$. We let
	\begin{align*}
		L^x(\delta) := \frac{l^{x^{2-\delta}}(\delta)}{2-\delta}=\frac{ (X(\delta)^{2-\delta} - x^{2-\delta})^+ - (x_0^{2-\delta}- x^{2-\delta})^+}{1-\delta/2} - 2\int_0^{\,\cdot} 1(X(\delta)_t>x)X(\delta)_t^{1-\delta}\,dB_t
	\end{align*}
	whenever $x\ge 0$. In fact, the stochastic integral defines an $L^2$ martingale. Moreover, the following occupation time formula holds: Almost surely
	\begin{align*}
		\int_0^T \psi(X(\delta)_t)\,dt = \int_0^\infty \psi(x)\,L^x(\delta)_T\,x^{\delta-1}\,dx\quad\text{for all measurable $\psi\colon [0,\infty)\to [0,\infty]\,,\ T\ge 0$}\;.
	\end{align*}
\end{prop}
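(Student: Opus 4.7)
The plan is to obtain a semimartingale decomposition of $M := X(\delta)^{2-\delta} = Y(\delta)^{1-\delta/2}$, then apply Tanaka's formula to read off the local times $l^{x^{2-\delta}}(\delta)$, and finally use Kolmogorov's criterion together with the semimartingale occupation time formula to establish the regularity and the stated integral identity. For the first step, since $y\mapsto y^{1-\delta/2}$ fails to be $C^2$ at $0$, I would regularize via $f_\epsilon(y) = (y+\epsilon)^{1-\delta/2}$ and apply Itô's formula to $f_\epsilon(Y(\delta))$; the direct computation produces
\begin{align*}
f_\epsilon(Y(\delta)_t) = f_\epsilon(y_0) + 2(1-\delta/2)\int_0^t (Y(\delta)_s+\epsilon)^{-\delta/2}\sqrt{Y(\delta)_s}\,dB_s + \delta(1-\delta/2)\epsilon\int_0^t (Y(\delta)_s+\epsilon)^{-1-\delta/2}\,ds\,.
\end{align*}
From the explicit density \eqref{eq:besseldens} one checks $\E\int_0^T X(\delta)_s^{2(1-\delta)}\,ds < \infty$ (near zero the density behaves like $x^{\delta-1}$, so $x^{2(1-\delta)}\cdot x^{\delta-1} = x^{1-\delta}$ is integrable precisely because $\delta<2$), and also that $X(\delta)$ spends no Lebesgue time at $0$ almost surely. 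This allows passage of the martingale term to its $L^2$-limit $(2-\delta)\int_0^t X(\delta)_s^{1-\delta}\,dB_s$, while the non-negative drift converges to a continuous non-decreasing process $A_t$ supported on $\{X(\delta) = 0\}$, yielding $M_t = x_0^{2-\delta} + (2-\delta)\int_0^t X(\delta)_s^{1-\delta}\,dB_s + A_t$.

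With $M$ now a semimartingale, Tanaka's formula at level $x^{2-\delta}$ reads
\begin{align*}
(M_t - x^{2-\delta})^+ = (x_0^{2-\delta}-x^{2-\delta})^+ + \int_0^t 1(M_s > x^{2-\delta})\,dM_s + \tfrac12\,l^{x^{2-\delta}}(\delta)_t\,.
\end{align*}
Since $A$ is supported on $\{X(\delta)=0\}$ while $\{M_s>x^{2-\delta}\} = \{X(\delta)_s > x\}$ (the map $y\mapsto y^{2-\delta}$ being strictly increasing), the indicator kills $dA$, so $\int_0^t 1(X(\delta)_s > x)\,dM_s = (2-\delta)\int_0^t 1(X(\delta)_s > x)X(\delta)_s^{1-\delta}\,dB_s$. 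Rearranging and dividing by $2-\delta$ produces exactly the stated identity for $L^x(\delta) = l^{x^{2-\delta}}(\delta)/(2-\delta)$, while the same integrability bound shows that the stochastic integral appearing in the definition is a bona fide $L^2$ martingale.

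The joint local Hölder regularity of $(a,t)\mapsto l^a(\delta)_t$ of index $1/2-$ follows from the same Tanaka identity: the deterministic part $(M_t - a)^+$ is $1$-Lipschitz in $a$ and Hölder $1/2-$ in $t$, while the stochastic integral increment $\int_0^t [1(M_s > a_2) - 1(M_s > a_1)]\,dM_s$ is controlled in $L^p$ via BDG and the moment bound $\E\int_0^t 1(a_1 < M_s \le a_2)\,d\langle M\rangle_s \lesssim |a_2 - a_1|$ coming from \eqref{eq:besseldens}; Kolmogorov's criterion then produces a jointly continuous modification, and the substitution $a = x^{2-\delta}$ transfers this to $(x,t)\mapsto L^x(\delta)_t$. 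Finally, the occupation time formula follows from the semimartingale occupation time formula $\int_0^t\phi(M_s)\,d\langle M\rangle_s = \int\phi(a)\,l^a(\delta)_t\,da$ together with $d\langle M\rangle_s = (2-\delta)^2 X(\delta)_s^{2(1-\delta)}\,ds$, after the change of variables $a = x^{2-\delta}$, $da = (2-\delta)x^{1-\delta}\,dx$, which after simplification produces exactly the claimed weight $x^{\delta-1}$. I expect the main obstacle to be making sense of the singular drift $\delta(1-\delta/2)\epsilon\int_0^t (Y(\delta)_s+\epsilon)^{-1-\delta/2}\,ds$ as $\epsilon\downarrow 0$ (this is essentially the construction of the local time of $Y(\delta)$ at $0$ via normalized occupation of $\{Y(\delta)\le\epsilon\}$, following \cite{donati-yor-bessel}), together with the uniform $L^2$ control of the martingale term when $\delta$ is close to $2$, where the cancellation between the $x^{1-\delta}$ singularity of the integrand and the $x^{\delta-1}$ behaviour of the density in \eqref{eq:besseldens} is exactly what makes the whole range $\delta\in(0,2)$ work.
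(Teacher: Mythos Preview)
Your proposal is correct and follows essentially the same route as the paper: regularize via $(Y(\delta)+\epsilon)^{1-\delta/2}$, use the density \eqref{eq:besseldens} to control the $L^2$ norm of the martingale part and pass to the limit, identify the finite-variation part as a non-decreasing process supported on $\{X(\delta)=0\}$, then invoke Tanaka's formula and the semimartingale occupation time formula with the change of variables $a=x^{2-\delta}$. The only difference is cosmetic: where you spell out the Kolmogorov argument for the $1/2-$ H\"older regularity of $(a,t)\mapsto l^a(\delta)_t$, the paper simply appeals to classical local time theory for semimartingales (Revuz--Yor, Chapter~VI).
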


\begin{proof}
	We will begin by showing that $X(\delta)^{2-\delta}$ is a semimartingale and establish its decomposition into a local martingale and finite variation part. By Itô's formula for $\epsilon>0$,
	\begin{align*}
		d(Y(\delta)_t+\epsilon)^{1-\delta/2} = (2-\delta)\sqrt{Y(\delta)_t}(Y(\delta)_t+\epsilon)^{-\delta/2}\,dB_t + \delta(1-\delta/2) \,\frac{\epsilon\,dt}{(Y(\delta)_t+\epsilon)^{1+\delta/2}}\;.
	\end{align*}
	From \eqref{eq:besseldens} we see that for any $T\ge 0$ we can bound
	\begin{align*}
		\E \left\langle \int_0^{\,\cdot} (2-\delta)Y(\delta)_t^{(1-\delta)/2}\,dB_t \right\rangle_{T} &= (2-\delta)^2\,\E\int_0^{T} Y(\delta)_t^{1-\delta}\,dt \\
		&=  (2-\delta)^2\, \int_0^T \E\left(X(\delta)_t^{2(1-\delta)}\right)\,dt < \infty\;.
	\end{align*}
	Therefore by stochastic dominated convergence, we deduce that
	\begin{align*}
		\int_0^{\,\cdot} (2-\delta) \sqrt{Y(\delta)_t}(Y(\delta)_t+\epsilon)^{-\delta/2}\,dB_t \to (2-\delta)\int_0^{\,\cdot} Y(\delta)_t^{(1-\delta)/2}\,dB_t =\vcentcolon M(\delta)^0
	\end{align*}
	u.c.p. as $\epsilon\to 0$ and the previous computation implies that the stochastic integral in the limit is defined. Let $\ell(\delta)=(X(\delta)^{2-\delta}-x_0^{2-\delta} - M(\delta)^0)/(1-\delta/2)$ so that
	\begin{align*}
		\int_0^{\,\cdot} \delta \,\frac{\epsilon\,dt}{(Y(\delta)_t+\epsilon)^{1+\delta/2}} \to \ell(\delta)\quad\text{u.c.p. as $\epsilon\to 0$}
	\end{align*}
	which entails that $\ell(\delta)$ is a non-decreasing continuous process; furthermore we can also observe that $1(Y(\delta)>\eta)\,d\ell(\delta)=0$ a.s.\ for all $\eta>0$ and hence $1(X(\delta)>0)\,d\ell(\delta)=0$ a.s. Thus $X(\delta)^{2-\delta}$ is a semimartingale and we can associate local times to it
	\begin{align*}
		l^z(\delta) &= 2\left( (X(\delta)^{2-\delta} - z)^+ - (x_0^{2-\delta}-z)^+ - \int_0^{\,\cdot} 1(X(\delta)^{2-\delta}_t>z)\,d\left(X(\delta)^{2-\delta}_t\right)\right)\\
		&= 2\left( (X(\delta)^{2-\delta} - z)^+ - (x_0^{2-\delta}-z)^+ - (2-\delta)\int_0^{\,\cdot} 1(X(\delta)^{2-\delta}_t>z)\,X(\delta)_t^{1-\delta}\,dB_t\right)\quad
	\end{align*}
	for $z\ge 0$. By classical local time theory, see for instance \cite[Chapter VI]{revuz-yor}, $(t,z)\mapsto l^z(\delta)_t$ has a version that is locally Hölder continuous of exponent $1/2-\epsilon$ for all $\epsilon>0$ on $[0,\infty)\times [0,\infty)$ and such that $l^z(\delta)$ is non-decreasing for all $z\ge 0$. Also, we have the occupation time formula i.e. a.s.
	\begin{gather*}
		(2-\delta)^2\int_0^T \phi(X(\delta)^{2-\delta}_t)Y(\delta)_t^{1-\delta}\,dt = \int_0^\infty \phi(z)l^z(\delta)_T\,dz
	\end{gather*}
	for all measurable $\phi\colon [0,\infty)\to [0,\infty)$. The result is now immediate.
\end{proof}

We point out that the processes $(x,T)\mapsto L^x(\delta)_T$ and $(x,T)\mapsto \lambda^{\delta/2-1}L^{\sqrt{\lambda}x}(\delta)_{\lambda T}$ have the same law when $X(\delta)_0=0$ and $\lambda>0$. This scaling property follows by using the Brownian scaling property of Bessel processes together with the definitions in the proposition above.

From now on, we will consider $x_0=0$. By standard excursion theory as exposed for instance in \cite[Chapter 22]{kallenberg} applied to the process $X(\delta)^{2-\delta}$ together with Proposition \ref{prop:localdef} there exists an infinite measure $\nu_\delta$ on
\begin{align*}
	E=\{e\in C([0,\infty),\R)\colon e_0=0\;,\ e|_{[\zeta_e,\infty)}=0\}\quad\text{where}\quad \zeta_e=\inf\{t>0\colon e_t=0\}
\end{align*}
called the \emph{excursion measure of the Bessel process of dimension $\delta$} such that the point process
\begin{align*}
	\sum_{\ell\ge 0\colon \tau(\delta)_\ell>\tau(\delta)_{\ell-}} \delta_{\left(\ell, \,X(\delta)_{(\tau(\delta)_{\ell-}+\,\cdot\,)\wedge \tau(\delta)_\ell}\right)}
\end{align*}
is a Poisson point process with intensity $\lambda_+\otimes \nu_\delta$ on the space $[0,\infty)\times E$ where here $\tau(\delta)_{\ell}=\inf\{t\ge 0\colon L^0(\delta)_t>\ell\}$ and $\tau(\delta)_{\ell-}= \inf\{t\ge 0\colon L^0(\delta)_t\ge \ell\}$ and where $\lambda_+$ is the Lebesgue measure on $[0,\infty)$. In the following, we will also write $e$ also for the coordinate process on $E$.

Note that by Proposition \ref{prop:localdef}, $X(\delta)^{2-\delta}-(1-\delta/2)L^0(\delta)$ is an $L^2$ martingale and it is easy to deduce by optional stopping that
\begin{align*}
	\E(R)= x^{2-\delta}/(1-\delta/2)\quad\text{where}\quad R=L^0(\delta)_{\,\inf\{t\ge 0\colon X_t \,=\, x\}}\;.
\end{align*}
By excursion theory, we know that $R$ is exponentially distributed with parameter $\nu_\delta(\sup e\ge x)$ and therefore
\begin{align}
	\label{eq:bessel-norm}
	\nu_\delta(\sup e \ge x) = (1-\delta/2)\,x^{\delta-2}\quad\text{for}\quad x>0\;.
\end{align}
Finally, we will need to introduce a very mild generalization of Bessel processes. Fix $\beta\in [-1,1]$. We start with the Bessel process $X(\delta)$ and, independently for all excursions, we replace each excursion by its negative with probability $(1-\beta)/2$ and leave it unchanged with probability $(1+\beta)/2$. We therefore obtain a process $X(\delta,\beta)$ which we call \emph{an asymmetric Bessel process}. Then
\begin{align*}
	\sum_{\ell\ge 0\colon \tau(\delta)_\ell > \tau(\delta)_{\ell-}} \delta_{\left(\ell, \,X(\delta,\beta)_{(\tau(\delta)_{\ell-}+\,\cdot\,)\wedge \tau(\delta)_\ell}\right)}
\end{align*}
is a Poisson point process of intensity $\lambda_+\otimes \nu_\delta^\beta$ where
\begin{align*}
	\nu_\delta^\beta = \frac{1+\beta}{2}\,\nu_\delta + \frac{1-\beta}{2}\,(e\mapsto -e)_*\nu_\delta \;.
\end{align*}
In the following, we will always consider the coupling of a Bessel process with an asymmetric Bessel process as described here.

\begin{prop}
	\label{prop:asymmetriclocal}
	For any $\beta\in [-1,1]$ there exists a local time process $(t,x)\mapsto L^x(\delta,\beta)_t$ defined on $[0,\infty)\times (\R\setminus\{0\})$ which is locally Hölder continuous with exponent $1/2-\epsilon$ for all $\epsilon>0$, non-decreasing in $t$ for each $x$ and a.s.\ satisfies the occupation time formula
	\begin{align*}
		\int_0^T \psi(X(\delta,\beta)_t)\,dt = \int_\R \psi(x)L^x(\delta,\beta)_T\,|x|^{\delta-1}\,dx\quad\text{for all measurable $\psi\colon \R\to [0,\infty]\,,\ T\ge 0$}\;.
	\end{align*}
	Moreover, if we define $L^{0\pm}(\delta,\beta) = (1\pm \beta)/2\cdot L^0(\delta)$, then by considering a suitable version of $L(\delta,\beta)$, we have
	\begin{align*}
		\sup_{[0,T]} \frac{|L^{\pm x}(\delta,\beta) - L^{ 0\pm}(\delta,\beta)|}{|x|^{1-\delta/2-\epsilon}} \to 0\quad\text{as $x\downarrow 0$ for all $T\ge 0$, $\epsilon>0$}\;.
	\end{align*}
\end{prop}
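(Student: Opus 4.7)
The plan is to construct $L^x(\delta,\beta)$ for $x\ne 0$ via excursion theory and then transfer the regularity of the ordinary Bessel local time from Proposition \ref{prop:localdef} through a time-change argument. For $x>0$, I would set $L^x(\delta,\beta)_t$ to be the sum of the Bessel excursion local times $\theta^x(e_j)$ (extracted from the local time of Proposition \ref{prop:localdef} accumulated during a single excursion $e$) over those completed excursions $e_j$ of $X(\delta)$ with $\epsilon_j=+1$ that finish before $t$, plus the contribution of the ongoing positive excursion; the definition is symmetric for $x<0$. Monotonicity in $t$ is then immediate, and the occupation-time formula on $\R\setminus\{0\}$ follows by splitting $\int_0^t\psi(X(\delta,\beta)_s)\mathbf{1}\{X(\delta,\beta)_s>0\}\,ds$ into a sum over excursion intervals and applying the occupation formula for $X(\delta)$ within each excursion (and symmetrically for negative $x$).

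For the Hölder regularity, I would introduce the non-decreasing $1$-Lipschitz process $A_t=\int_0^t\mathbf{1}\{X(\delta,\beta)_s\ge 0\}\,ds$ with right-continuous inverse $\tau$, and consider the process $X'_s:=X(\delta,\beta)_{\tau(s)}$ obtained by excising the negative excursion intervals. By independent thinning, the excursions of $X'$ form a Poisson point process whose intensity, in the local-time parameter of $X(\delta)$, equals $\lambda_+\otimes[(1+\beta)/2\cdot \nu_\delta]$; rescaling the local-time axis by the factor $(1+\beta)/2$ converts this into $\lambda_+\otimes \nu_\delta$, so by Itô's synthesis of a Markov process from its excursion point process $X'$ is a Bessel process of dimension $\delta$, whose local time at $0$ satisfies $\tilde L^0_{A_t}=(1+\beta)/2\cdot L^0(\delta)_t = L^{0+}(\delta,\beta)_t$. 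The definition of the first step then reads $L^x(\delta,\beta)_t=\tilde L^x_{A_t}$ for $x>0$, and joint local Hölder regularity of exponent $1/2-$ in $(t,x)$ follows from Proposition \ref{prop:localdef} applied to $X'$ composed with the $1$-Lipschitz process $A$; the case $x<0$ is symmetric.

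Finally, for the convergence at $0$, Proposition \ref{prop:localdef} gives $\tilde L^x_s=l^{x^{2-\delta}}(\delta)_s/(2-\delta)$ with $z\mapsto l^z(\delta)_s$ locally Hölder of exponent $1/2-\eta$ for any $\eta>0$, uniformly on compact time intervals. Given $\epsilon>0$, choosing $\eta$ small enough that $(2-\delta)(1/2-\eta)>1-\delta/2-\epsilon$ yields $\sup_{s\in [0,S]}|\tilde L^x_s-\tilde L^0_s|/x^{1-\delta/2-\epsilon}\to 0$ a.s.\ as $x\downarrow 0$, and composing with the bounded random variable $A_T$ delivers the claimed rate. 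The delicate point of the whole argument will be the excursion-theoretic identification of $X'$ as a Bessel process of dimension $\delta$ together with the explicit normalisation $\tilde L^0_{A_t}=(1+\beta)/2\cdot L^0(\delta)_t$, which relies on Itô's synthesis theorem and on the fact that a linear rescaling of the local-time axis of a Bessel excursion point process produces another valid Bessel excursion point process; all the other steps are essentially routine consequences of this identification and of Proposition \ref{prop:localdef}.
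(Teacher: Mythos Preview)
Your proposal is correct and follows essentially the same route as the paper. Both arguments hinge on the observation that the positive excursions of $X(\delta,\beta)$, indexed by the Bessel local time $L^0(\delta)$, form a Poisson point process of intensity $(1+\beta)/2\cdot\lambda_+\otimes\nu_\delta$, which after rescaling the local-time axis coincides with the excursion point process of an ordinary Bessel process; this identification then allows one to pull back the Hölder regularity and the $x\downarrow 0$ behavior from Proposition~\ref{prop:localdef}. The paper packages this as a one-line distributional equality
\[
(L^x(\delta,\beta)_{\tau(\delta)_\ell})_{x>0,\,\ell\ge 0}\ \stackrel{d}{=}\ (L^x(\delta)_{(1+\beta)/2\cdot\tau(\delta)_\ell})_{x>0,\,\ell\ge 0},
\]
whereas you realise the same identification almost surely via the time change $A_t=\int_0^t\mathbf{1}\{X(\delta,\beta)_s\ge 0\}\,ds$ and It\^o's synthesis theorem, obtaining $L^x(\delta,\beta)_t=\tilde L^x_{A_t}$. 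Your formulation has the minor advantage that the inequality $A_T\le T$ immediately converts $\sup_{[0,T]}$-bounds for $\tilde L$ into $\sup_{[0,T]}$-bounds for $L(\delta,\beta)$, while the paper's distributional statement at inverse local times requires a small additional remark to pass to general times; conversely, the paper avoids explicitly invoking the synthesis theorem.
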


\begin{proof}
	The process $L(\delta,\beta)$ defined on $[0,\infty)\times (\R\setminus \{0\})$ with the stated properties and satisfying the given occupation time formula is easy to construct from $L(\delta)$. To analyze the limiting behavior, it suffices by symmetry to consider the `$+$' case. The case $\beta=-1$ is trivial, so we assume that $\beta>-1$. We note that
	\begin{align*}
		&\sum_{\ell\ge 0\colon \tau(\delta)_\ell > \tau(\delta)_{\ell-}} \delta_{\left(\ell, \,X(\delta,\beta)_{(\tau(\delta)_{\ell-}+\,\cdot\,)\wedge \tau(\delta)_\ell}\right)}\,1\left( X(\delta,\beta)_{(\tau(\delta)_{\ell-}+\tau(\delta)_\ell)/2}>0 \right) \\
		&\qquad \stackrel{d}{=} 
		\sum_{\ell\ge 0\colon \tau(\delta)_\ell>\tau(\delta)_{\ell-}} \delta_{\left(2/(1+\beta)\,\cdot\, \ell, \,X_{(\tau(\delta)_{\ell-}+\,\cdot\,)\wedge \tau(\delta)_\ell}\right)}\;.
	\end{align*}
	since both sides are Poisson point processes of intensity $(1+\beta)/2\cdot \lambda_+\otimes \nu_\delta$. Therefore
	\begin{align*}
		(L^x(\delta,\beta)_{\tau(\delta)_\ell}\colon (x,\ell)\in (0,\infty)\times [0,\infty)) \stackrel{d}{=} (L^x(\delta)_{\tau(\delta)_{(1+\beta)/2\,\cdot\, \ell}}\colon (x,\ell)\in (0,\infty)\times [0,\infty))
	\end{align*}
	and the result then follows from the Hölder continuity property in Proposition \ref{prop:localdef}.
\end{proof}

\begin{remark}
	\label{remark:bm-local-equivalence}
	Note that $X(1,0)$ is a standard Brownian motion and by the occupation time formula, $(X(1,0),L^{0+}(1,0))$ has the same law as the pair consisting of a Brownian motion and its local time at $0$.
\end{remark}

In the following section, it will be important to make sense of the process $\int_0^{\,\cdot} 1/X(\delta,\beta)_t\,dt$. It turns out (as we will see) that this integral does not converge absolutely whenever $\delta\le 1$ and our objective will be to use Lévy compensation to define it in this case as well.

\begin{prop}
	\label{prop:compensatedint}
	For $\beta\in [-1,1]$ and $\delta\in (0,1)$ the following expression defines a continuous process: For $t\ge 0$, let
	\begin{align*}
		I(\delta,\beta)_t &= \int_0^\infty x^{-1}|x|^{\delta-1}(L^x(\delta,\beta)_t - L^{0+}(\delta,\beta)_t)\,dx \\
		&\qquad + \int_{-\infty}^0 x^{-1}|x|^{\delta-1} (L^x(\delta,\beta)_t - L^{0-}(\delta,\beta)_t)\,dx\;.
	\end{align*}
	Moreover, when $\beta=1$ then $I(\delta,\beta)= 2/(\delta-1)\cdot (X(\delta)-B)$ almost surely. For $\delta=1$, one can also define a continuous process by
	\begin{align*}
		I(1,0)_t = \lim_{C\to \infty} \int_{-C}^{\,C} x^{-1}(L^x(1,0)_t - L^{0+}(1,0)_t)\,dx \quad\text{for $t\ge 0$}
	\end{align*}
	noting that the above is a limit of an eventually constant sequence. In both the $\delta<1$, and the $\delta=1$ and $\beta=0$ case, we have a.s.
	\begin{align}
		\label{eq:posstepintegral}
		I(\delta,\beta)_{t_+}- I(\delta,\beta)_{t_-} = \int_{t_-}^{t_+} 1/X(\delta,\beta)_s\,ds\quad\text{when $|X(\delta,\beta)|>0$ on $(t_-,t_+)$}\;.
	\end{align}
\end{prop}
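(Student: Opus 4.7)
The plan is to treat the four assertions in turn, with the Hölder continuity of $L^x(\delta,\beta)$ near $x=0$ (Proposition \ref{prop:asymmetriclocal}) and the compact support of $x\mapsto L^x(\delta,\beta)_T$ as the key ingredients. For $\delta\in(0,1)$, the integrand $\sgn(x)|x|^{\delta-2}(L^x-L^{0\pm})$ is $O(|x|^{\delta/2-1-\epsilon})$ near $0$ by the Hölder bound of exponent $1-\delta/2-\epsilon$, hence absolutely integrable there; for $|x|$ large we integrate $-L^{0\pm}_t|x|^{\delta-2}$ (since $L^x_t$ vanishes on $\{|x|>R_t\}$ with $R_t:=\sup_{[0,t]}|X(\delta,\beta)|$), which is integrable at infinity when $\delta<1$. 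Thus $I(\delta,\beta)_t$ is a well-defined absolutely convergent integral; continuity in $t$ follows from dominated convergence on compact time intervals $[0,T]$, using the uniformity of the Hölder bound and of the bound on $L^{0\pm}$ together with the continuity of $t\mapsto L^x(\delta,\beta)_t$ for each fixed $x$.

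The $\delta=1$ case requires separate treatment since the one-sided integral against $|x|^{-1}$ diverges logarithmically at infinity. However, for $C>R_t$ the tails cancel exactly,
\[
\int_{R_t<|x|<C} x^{-1}(L^x_t-L^{0+}_t)\,dx = -L^{0+}_t\left(\int_{R_t}^{C}\frac{dx}{x}+\int_{-C}^{-R_t}\frac{dx}{x}\right) = 0\;,
\]
so the sequence $C\mapsto \int_{-C}^{C} x^{-1}(L^x_t-L^{0+}_t)\,dx$ is eventually constant (equal to $\int_{|x|\le R_t} x^{-1}(L^x_t-L^{0+}_t)\,dx$). This proves the limit defining $I(1,0)_t$ exists, and continuity in $t$ follows as before by fixing a large $C$.

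For the explicit formula when $\beta=1$, we have $X(\delta,1)=X(\delta)\ge 0$ so $L^{0-}=0$, $L^{0+}=L^0(\delta)$, and $I(\delta,1)_t=\int_0^\infty x^{\delta-2}(L^x(\delta)_t-L^0(\delta)_t)\,dx$. Substituting the semimartingale decomposition of Proposition \ref{prop:localdef} (with $x_0=0$),
\[
L^x(\delta)_t-L^0(\delta)_t=\frac{(X_t^{2-\delta}-x^{2-\delta})^+-X_t^{2-\delta}}{1-\delta/2}+2\int_0^t 1(X_s\le x)X_s^{1-\delta}\,dB_s\;,
\]
the drift piece evaluates by elementary calculus (splitting the $x$-integral at $X_t$) to $-2X(\delta)_t/(1-\delta)$, while the martingale piece becomes, after a stochastic Fubini exchange, $2\int_0^t X_s^{1-\delta}\cdot X_s^{\delta-1}/(1-\delta)\,dB_s=2B_t/(1-\delta)$. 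The exchange is legitimate because, after swapping the order of integration, the integrand $\int_{X_s}^{\infty} x^{\delta-2}\,dx\cdot X_s^{1-\delta}=1/(1-\delta)$ is deterministic and trivially square-integrable. Summing yields the claimed identity.

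Finally, for \eqref{eq:posstepintegral}, observe that on an interval $(t_-,t_+)$ on which $|X(\delta,\beta)|>0$, both $L^{0\pm}$ are constant on $[t_-,t_+]$, so $I(\delta,\beta)_{t_+}-I(\delta,\beta)_{t_-}=\int_{\R}\sgn(x)|x|^{\delta-2}\Delta L^x\,dx$ with $\Delta L^x:=L^x_{t_+}-L^x_{t_-}$. By sign constancy of $X$ on this interval, $\Delta L^x$ vanishes on a half-line (WLOG the one with $x\le 0$), so the integral reduces to $\int_0^\infty x^{\delta-2}\Delta L^x\,dx$. For each $\epsilon>0$, the occupation time formula applied to $\psi(x)=1(x>\epsilon)/x$ gives $\int_\epsilon^\infty x^{\delta-2}\Delta L^x\,dx=\int_{t_-}^{t_+} 1(X_s>\epsilon)/X_s\,ds$; letting $\epsilon\downarrow 0$, the left side converges by dominated convergence (via the Hölder bound on $\Delta L^x$) to the full $\int_0^\infty x^{\delta-2}\Delta L^x\,dx$, while the right side is monotone in $\epsilon$ and converges to $\int_{t_-}^{t_+} 1/X_s\,ds$; this integral is therefore finite and equal to the claimed quantity. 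The same argument handles the $(\delta,\beta)=(1,0)$ case, using the symmetric cutoff. The most delicate aspect of the whole proof is matching the Hölder regularity of $L^x$ in $x$ against the singularity of the integrand at the compensation point, a delicacy most visible in the borderline $\delta=1$ case, which is precisely why the symmetric-cutoff definition becomes necessary there.
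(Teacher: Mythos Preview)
Your proof is correct and, for the well-definedness, continuity, the $\delta=1$ case, and \eqref{eq:posstepintegral}, follows the same route as the paper (Hölder bound from Proposition~\ref{prop:asymmetriclocal} for integrability near $0$, compact support and tail cancellation for integrability at infinity, occupation time formula for \eqref{eq:posstepintegral}).

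The treatment of the identity $I(\delta,1)=2/(\delta-1)\,(X(\delta)-B)$ differs genuinely. The paper argues indirectly: on each excursion interval one has $d(X_t-B_t)=\tfrac{\delta-1}{2X_t}\,dt$ by It\^o's formula, which together with \eqref{eq:posstepintegral} shows that $F:=I(\delta,1)-2/(\delta-1)\,(X(\delta)-B)$ is constant on excursions; then $F\circ\tau(\delta)$ is a continuous L\'evy process by the strong Markov property, hence $\sigma W+c\ell$, and Brownian scaling forces $\sigma=c=0$ since $\delta<1$. You instead plug in the explicit Tanaka-type formula for $L^x(\delta)-L^0(\delta)$ and compute the $x$-integral directly, invoking stochastic Fubini for the martingale piece. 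Your approach is more hands-on and avoids the excursion/scaling machinery; the paper's approach avoids the Fubini verification and is perhaps more robust.

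One point to tighten: your justification of the stochastic Fubini step is circular---that the integrand \emph{after} swapping equals $1/(1-\delta)$ does not by itself license the swap. What one actually needs is a bound of the form $\int_0^\infty\bigl(\int_0^t x^{2(\delta-2)}1(X_s\le x)X_s^{2(1-\delta)}\,ds\bigr)^{1/2}dx<\infty$ a.s. This holds: near $x=0$ one uses $X_s^{2(1-\delta)}\le x^{2(1-\delta)}$ on $\{X_s\le x\}$ together with $\mathrm{Leb}\{s\le t:X_s\le x\}\lesssim x^\delta$ from the occupation time formula, giving an $x^{\delta/2-1}$ bound; for large $x$ the indicator is identically $1$ and one is left with $x^{\delta-2}$ times an a.s.\ finite constant. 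Both are integrable for $\delta\in(0,1)$.
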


We also define $I(1,\mu):=I(1,0) + \mu L^{0+}(1,0)$ when $\mu\in\R$ for later reference. Note that the definitions in the statement are very natural; indeed, if we apply the occupation time formula in Proposition \ref{prop:asymmetriclocal} we see for instance that
\begin{align*}
	\int_0^t 1(X(\delta,\beta)_t>0)/X(\delta,\beta)_t\,dt = \int_0^\infty x^{\delta-2}\,L^x(\delta,\beta)_t\,dx
\end{align*}
which diverges almost surely when $\delta\le 1$ (unless $\beta=-1$ of course) and is finite when $\delta>1$. The subtraction of the local time at $0$ in the definition creates the Lévy compensation needed to define the process in the $\delta\le 1$ case.

\begin{proof}[Proof of Proposition \ref{prop:compensatedint}]
	The fact that $I(\delta,\beta)$ is well-defined both when $\delta<1$ and $\beta\in [-1,1]$, and when $\delta=1$ and $\beta=0$ is immediate from the Hölder bound given in Proposition \ref{prop:asymmetriclocal}. Moreover, the occupation time formula given in Proposition \ref{prop:asymmetriclocal} also implies \eqref{eq:posstepintegral}. Suppose that $\delta<1$ and $\beta=1$, then by Itô's formula and \eqref{eq:posstepintegral}
	\begin{align*}
		F := I(\delta,\beta) - 2/(\delta-1)\cdot (X(\delta)-B)
	\end{align*}
	is constant on $(\tau(\delta)_{\ell-},\tau(\delta)_\ell)$ for all $\ell\ge 0$. Hence $F\circ \tau(\delta) = F\circ \tau(\delta)_-$ is continuous. Moreover, by the strong Markov property of $X(\delta)$, $F\circ \tau(\delta)$ has stationary and independent increments and so by Lévy's characterization, $F\circ \tau(\delta) = (\ell\mapsto \sigma W_\ell + c\ell)$ a.s.\ for some constants $\sigma\ge 0$, $c\in \R$ and a standard Brownian motion $W$. Thus $F= \sigma W\circ L^0(\delta) + cL^0(\delta)$ a.s. Since $F$ satisfies Brownian scaling, we necessarily have $\sigma=c=0$ as required (we use that $\delta<1$ here).
\end{proof}

\begin{prop}
	\label{prop:excursion-int}
	For $\delta\in (0,1]$ and $\epsilon>0$, we have
	\begin{align*}
		\nu_\delta\left(\int_0^{\zeta_e}\frac{dt}{e_t}=\infty\right)=0\quad\text{and}\quad \nu_\delta\left(\int_0^{\zeta_e}\frac{dt}{e_t}\ge \epsilon\right)=\epsilon^{\delta-2} \nu_\delta\left(\int_0^{\zeta_e}\frac{dt}{e_t}\ge 1\right)\;.
	\end{align*}
	If $\delta\in (0,1)$ and $\beta \in [-1,1]$ we can define a process $J(\delta,\beta, \epsilon)$ by
	\begin{gather*}
		J(\delta,\beta,\epsilon)_{l} = \sum_{\ell\le l\colon \tau(\delta)_\ell > \tau(\delta)_{\ell-}} \int_{\tau(\delta)_{\ell-}}^{\tau(\delta)_\ell} 1/X(\delta,\beta)_t\,dt \,1\left( \,\left|\int_{\tau(\delta)_{\ell-}}^{\tau(\delta)_\ell} 1/X(\delta,\beta)_t\,dt\right|\ge \epsilon\right) - C(\delta,\beta)_\epsilon l \\
		\text{where}\quad C(\delta,\beta)_\epsilon = \begin{cases}
			 \nu_\delta^\beta\left( \int_0^{\zeta_e}dt/e_t;\,\left|\int_0^{\zeta_e}dt/e_t\right| \ge \epsilon \right) &\colon \delta<1\;, \\
			 0 &\colon \delta=1\,,\;\beta=0\;.
		\end{cases}
	\end{gather*}
	Then $J(\delta,\beta,\epsilon)$ converges u.c.p. to the càdlàg process $I(\delta,\beta)\circ \tau(\delta)$ as $\epsilon\to 0$ in both the $\delta<1$ case and the case where $\delta=1$ and $\beta =0$.
\end{prop}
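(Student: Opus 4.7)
The plan is to prove the three pieces of the statement in turn: the scaling and finiteness identities for $\nu_\delta$ applied to $F(e):=\int_0^{\zeta_e}dt/e_t$, then the Lévy–Itô convergence of $J(\delta,\beta,\epsilon)$ to a Lévy process $Z$, and finally the identification $Z=I(\delta,\beta)\circ\tau(\delta)$. The scaling identity is direct: under the map $T_c\colon e\mapsto c\,e(\cdot/c^2)$ one computes $F(T_c e)=cF(e)$ by a change of variables, while testing against $\{\sup e\ge x\}$ together with \eqref{eq:bessel-norm} gives $(T_c)_*\nu_\delta=c^{2-\delta}\nu_\delta$; combining these yields $\nu_\delta(F\ge c\epsilon)=c^{\delta-2}\nu_\delta(F\ge \epsilon)$, which is the second identity modulo finiteness. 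For $\nu_\delta(F=\infty)=0$ and $\nu_\delta(F\ge 1)<\infty$, I would restrict to the finite-mass set $\{\sup e\ge a\}$ and decompose the excursion at its first passage time $T_a$ of level $a$: by the strong Markov property and an $h$-transform using the scale function $x^{2-\delta}$, the pre-$T_a$ piece is a Bessel process of dimension $4-\delta$ from $0$ to $a$ and the post-$T_a$ piece is a Bessel process of dimension $\delta$ from $a$ killed at $0$; both have finite $\int dt/X_t$ a.s.\ because near $0$ the process behaves like $\sqrt{t}$ and near the killing time like $\sqrt{T_0-t}$, giving local integrability of $1/X$ at all singular points.

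For the convergence of $J(\delta,\beta,\epsilon)$, by the Poisson point process description of excursions, $J(\delta,\beta,\epsilon)_l$ is the compensated sum of the jumps of the image point process under $G$, where $G$ extends $F$ with the appropriate sign on negative excursions. The pushforward $\rho:=\nu_\delta^\beta\circ G^{-1}$ is $\sigma$-finite on $\R\setminus\{0\}$ with density of order $|x|^{\delta-3}$ near the origin by the scaling just established, hence $\int (x^2\wedge 1)\,d\rho<\infty$ for $\delta\in (0,1]$. A Doob $L^2$ estimate on $J(\delta,\beta,\epsilon_1)-J(\delta,\beta,\epsilon_2)$, whose variance is bounded by $l\int_{\epsilon_1\wedge\epsilon_2\le |x|<\epsilon_1\vee\epsilon_2}x^2\,d\rho$, then shows that $J(\delta,\beta,\epsilon)$ is u.c.p.\ Cauchy as $\epsilon\to 0$, and converges to a càdlàg Lévy process $Z_l=\int_0^l\int x\,\tilde N(dx,ds)$, where $\tilde N$ is the compensated Poisson random measure of jumps.

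It then remains to identify $Z$ with $I(\delta,\beta)\circ\tau(\delta)$. By \eqref{eq:posstepintegral} and continuity of $I(\delta,\beta)$, the jump of $l\mapsto I(\delta,\beta)_{\tau(\delta)_l}$ at the $\ell$-th excursion equals $G(e_\ell)$, so both processes have the same jumps; and the strong Markov property of $X(\delta,\beta)$ at $\tau(\delta)_l$ makes $l\mapsto I(\delta,\beta)_{\tau(\delta)_l}$ a Lévy process. The Brownian rescaling $X\mapsto cX(\cdot/c^2)$, which transforms $\tau$ as $\tau_l\mapsto c^{-2}\tau_{lc^{2-\delta}}$ and $I$ as $I_t\mapsto c^{-1}I_{c^2t}$, shows that this Lévy process is self-similar of index $1/(2-\delta)$. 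For $\delta<1$ this index lies strictly between $1/2$ and $1$, which rules out both a Brownian component (self-similar of index $1/2$) and a linear drift (self-similar of index $1$), forcing $I(\delta,\beta)\circ\tau(\delta)=Z$. In the boundary case $\delta=1$, $\beta=0$, self-similarity of index $1$ still excludes the Brownian component, and the symmetry $e\mapsto -e$, which preserves $\nu_1^0$ while changing the signs of $G$ and of $I(1,0)$, forces the drift to vanish. The main obstacle is precisely this last identification: ruling out additional Brownian or drift components in the continuous process $I(\delta,\beta)\circ\tau(\delta)$ via self-similarity and symmetry, which is especially delicate at $\delta=1$ where scaling alone is insufficient and one has to invoke the $\beta=0$ symmetry.
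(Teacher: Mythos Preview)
Your argument is correct and matches the paper's: both use Bessel scaling for the identity on $\nu_\delta$, build the limit $Z$ of $J(\delta,\beta,\epsilon)$ as a compensated Poisson sum, and identify $Z=I(\delta,\beta)\circ\tau(\delta)$ by observing that the difference is a continuous L\'evy process $\sigma W_\ell+c\ell$ and then killing $\sigma,c$ via self-similarity of index $1/(2-\delta)$ (plus the $\beta=0$ symmetry when $\delta=1$). The only divergence is in how finiteness is obtained: the paper simply invokes the continuity of $I(\delta,\beta)$ from Proposition~\ref{prop:compensatedint}, which immediately gives that each excursion contributes a finite jump and that $I\circ\tau$ is c\`adl\`ag (hence finitely many jumps of size $\ge\epsilon$, so $\nu_\delta(F\ge\epsilon)<\infty$); your Williams-type decomposition at the first passage of level $a$ cleanly yields $F<\infty$ $\nu_\delta$-a.e., but as written does not quite deliver $\nu_\delta(F\ge 1)<\infty$ (restricting to $\{\sup e\ge a\}$ bounds $\nu_\delta(F\ge 1,\sup e\ge a)$ for each fixed $a$, yet the bound blows up as $a\downarrow 0$), though this is easily patched by the same c\`adl\`ag argument.
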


\begin{proof}
	The first two claims are immediate from Proposition \ref{prop:compensatedint} and the scaling property of $(X(\delta,\beta),L^0(\delta))$. For the convergence claim, note first that there exists a càdlàg process $J(\delta,\beta)$ such that $J(\delta,\beta,\epsilon)\to J(\delta,\beta)$ u.c.p. as $\epsilon\to 0$ (by excursion theory, this is just the usual construction of compensated stable Lévy processes). By \eqref{eq:posstepintegral} in Proposition \ref{prop:compensatedint}, $F:=J(\delta,\beta)-I(\delta,\beta)\circ\tau(\delta)$ is a.s.\ continuous. Moreover, by the strong Markov property of $X(\delta,\beta)$, $F$ has independent and stationary increments and hence by Lévy's characterization $F=(\ell\mapsto \sigma W_\ell+c\ell)$ a.s.\ for constants $\sigma\ge 0$, $c>0$ and a standard Brownian motion $W$. If $\delta<1$ then since $F$ satisfies stable scaling with exponent $2-\delta$ we get $\sigma=c=0$. If $\delta=1$ then since $F$ satisfies stable scaling with exponent $1$ we obtain $\sigma=0$ and since $\beta=0$ necessarily $c=0$ as required.
\end{proof}

\subsection{Continuity with respect to the dimension} 

After having reviewed these results on Bessel processes of dimension $\delta$, we now analyze the dependence of functionals of these processes with respect to $\delta$ (which is why we kept the dependence on $\delta$ in the notation above). More specifically, we will consider the $\delta\uparrow 1$ limit. The first main statement is the following (recall that we are only considering the case $x_0=0$): 

\begin{thm}
	\label{thm:bessel-int-conv}
	For any $\mu\in \R$ we have $(X(\delta,-\mu(1-\delta)/2),I(\delta,-\mu(1-\delta)/2)) \to (X(1,0),I(1,\mu))$ as $\delta\uparrow 1$ in distribution with respect to the topology of uniform convergence on compacts.
\end{thm}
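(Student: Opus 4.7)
The plan is to exploit the excursion-theoretic representation from Proposition \ref{prop:excursion-int}. For $\delta \in (0,1)$ and $\beta_\delta := -\mu(1-\delta)/2$, the time-changed process $\tilde{J}^\delta := I(\delta, \beta_\delta) \circ \tau(\delta)$ is a spectrally asymmetric stable Lévy process of index $\alpha = 2-\delta$, whereas $\tilde{J}^\infty := I(1,\mu) \circ \tau(1) = I(1,0) \circ \tau(1) + (\mu/2)\cdot\mathrm{Id}$ is a Cauchy process plus linear drift of slope $\mu/2$ (using $L^{0+}(1,0) = L^0(1)/2$ and $L^0(1) \circ \tau(1)_\ell = \ell$). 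The conceptual heart, as foreshadowed in Subsection 1.3, is that as the stable index $\alpha \downarrow 1$ with asymmetry $\beta_\delta$ vanishing linearly in $1-\delta$, the divergent factor $\tan(\pi\alpha/2)$ in the Lévy--Khintchine form is exactly cancelled by $\beta_\delta$ to produce a finite drift, while the symmetric part of the exponent simply converges to that of a Cauchy process.

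\textbf{Step 1 (Lévy exponent convergence).} Using Proposition \ref{prop:excursion-int} and the scaling identity $\nu_\delta(\int_0^{\zeta_e} dt/e_t \ge \epsilon) = \epsilon^{\delta-2} c(\delta)$, I would write the Lévy measure of $\tilde{J}^\delta$ in the form
\begin{align*}
\pi_{\delta,\beta_\delta}(dx) = c(\delta)(2-\delta)\Bigl[\tfrac{1+\beta_\delta}{2}\mathbf{1}_{x>0} + \tfrac{1-\beta_\delta}{2}\mathbf{1}_{x<0}\Bigr]|x|^{-\alpha-1}\,dx,
\end{align*}
check that $c(\delta)(2-\delta)$ has a strictly positive finite limit $c_1$ as $\delta\uparrow 1$ (via the explicit density \eqref{eq:besseldens} combined with dominated convergence on excursions, or through a Williams-type decomposition), and use classical stable-process asymptotics. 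The symmetric part of the characteristic exponent converges to $-c_1\pi|\theta|$, and the antisymmetric part contributes $i\theta$ times $c_1\beta_\delta \tan(\pi\alpha/2)\to \mu/2$ in the limit, using $\tan(\pi\alpha/2)\sim -2/(\pi(1-\delta))$ as $\delta\uparrow 1$. Matching against $\tilde{J}^\infty$ identifies $c_1$ with the natural Cauchy scale of $I(1,0)\circ\tau(1)$ and gives $\tilde{J}^\delta\to \tilde{J}^\infty$ in the Skorokhod topology.

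\textbf{Step 2 (coupling and transfer to the original time scale).} Couple all $X(\delta)$ to a common driving Brownian motion in the squared Bessel SDE, so that $Y(\delta)\to Y(1)$ uniformly on compacts almost surely. Couple the excursion sign-flips through an independent i.i.d.\ family of uniforms, keeping the $\ell$-th excursion positive exactly when the $\ell$-th uniform lies below $(1+\beta_\delta)/2$; since $\beta_\delta\to 0$ this converges to a fair-coin coupling and yields $X(\delta,\beta_\delta)\to X(1,0)$ uniformly on compacts almost surely. The explicit semimartingale decomposition in Proposition \ref{prop:localdef}, together with $L^2$-convergence of the stochastic integral appearing there, gives $L^0(\delta)\to L^0(1)$ uniformly on compacts, hence $\tau(\delta)\to \tau(1)$ in the relevant sense. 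Combining the Skorokhod convergence of $\tilde{J}^\delta$ with this almost sure convergence of the time change, and exploiting that within each excursion $I(\delta,\beta_\delta)$ evolves smoothly as $\int 1/X\,ds$ (so that its full trajectory is reconstructed from $\tilde{J}^\delta$ at excursion endpoints together with the asymmetric Bessel path itself), one obtains joint uniform convergence $(X(\delta,\beta_\delta),I(\delta,\beta_\delta))\to (X(1,0),I(1,\mu))$ on compacts almost surely, whence also in distribution.

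\textbf{Main obstacle.} The most delicate step is the characteristic-exponent calculation in Step 1: the small-jump compensation and the asymmetry contribution individually diverge as $\alpha\downarrow 1$ and only their combined limit is finite. A safer route to pin down the constants is to first handle the spectrally positive case $\beta=1$ using the explicit identity $I(\delta,1) = \tfrac{2}{\delta-1}(X(\delta)-B)$ from Proposition \ref{prop:compensatedint}, which determines the limiting Cauchy scale $c_1$ unambiguously; the two-sided case then follows by splitting the excursion Poisson point process into positive and negative pieces and identifying the imbalance $\mu(1-\delta)/2$ as the precise source of the drift $\mu/2$. A secondary technical point is upgrading the Skorokhod convergence of $\tilde{J}^\delta$ (which has jumps) to \emph{uniform} convergence of $I(\delta,\beta_\delta)$ (which is continuous); this works because every jump of the limit $\tilde{J}^\infty$ corresponds to an excursion of $X(1,0)$ across which $I(1,\mu)$ interpolates continuously via $\int 1/X\,ds$, so under the coupling of Step 2 the jump locations and magnitudes of $\tilde{J}^\delta$ align with the corresponding excursion intervals of the Bessel processes as $\delta\uparrow 1$.
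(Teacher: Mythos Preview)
Your Step~1 strategy---computing the L\'evy exponent of $\tilde J^\delta$ and watching the asymmetry $\beta_\delta$ cancel the divergent $\tan(\pi\alpha/2)$ factor---is a legitimate alternative to what the paper does, and it is exactly the heuristic flagged in Section~\ref{sec:introduction}. The paper avoids any explicit L\'evy--Khintchine computation: it first proves the \emph{symmetric} case $(X(\delta,0),L^0(\delta),I(\delta,0))\to(X(1,0),L^0(1),I(1,0))$ by truncating to $\int 1(|X(\delta,0)|>\epsilon)/X(\delta,0)\,dt$ and controlling the remainder uniformly in $\delta$ via the explicit semimartingale formula for $L^x(\delta)-L^0(\delta)$; only afterwards does it introduce asymmetry, and then only through the identity $I(\delta,1)=\tfrac{2}{\delta-1}(X(\delta)-B)$ that you yourself single out as the ``safer route''. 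So your Step~1, if carried out, would replace a truncation argument by a characteristic-function one; both are viable, and you correctly identify where the constant $c(\delta)\to 1$ has to be nailed down.

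There is, however, a genuine gap in Step~2. The sign-flip coupling you describe---``keeping the $\ell$-th excursion positive exactly when the $\ell$-th uniform lies below $(1+\beta_\delta)/2$''---is not well-defined across varying $\delta$. Under the common squared-Bessel SDE coupling one has $Y(\delta)\le Y(1)$, so the zero set of $X(\delta)$ strictly contains that of $X(1)$: a single excursion interval of $X(1)$ contains one large excursion of $X(\delta)$ together with countably many small extra ones near its endpoints. There is no ``$\ell$-th excursion'' indexing that is stable as $\delta\uparrow 1$, and hence no obvious way to attach a single i.i.d.\ uniform family so that $X(\delta,\beta_\delta)\to X(1,0)$ almost surely. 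The paper sidesteps this entirely by never coupling sign-flips across different dimensions: it couples $X(\delta,\beta_\delta)$ with $X(\delta,0)$ for the \emph{same} $\delta$ (where the excursions coincide trivially), shows their difference vanishes u.c.p., and handles the passage $X(\delta,0)\to X(1,0)$ in distribution only.

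A second, related gap is in your final ``combining'' step. Step~1 gives $\tilde J^\delta\to\tilde J^\infty$ only in distribution, while Step~2 attempts an a.s.\ coupling of the $X$-processes. Under that specific coupling $\tilde J^\delta$ is a deterministic functional of $X(\delta,\beta_\delta)$, and the distributional statement from Step~1 tells you nothing about its a.s.\ behaviour there; you cannot simply merge the two to get joint uniform convergence. The paper's modular structure resolves this: joint convergence of $(X(\delta,0),L^0(\delta),I(\delta,0))$ is established first, and then the correction $I(\delta,\beta_\delta)-I(\delta,0)$ is shown to converge u.c.p.\ to $\mu L^0(\delta)/2$ under a coupling at fixed $\delta$, so the pieces fit together without ever needing a.s.\ control of a L\'evy process limit.
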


The proof will be obtained by putting the following three lemmas together.

\begin{lemma}
	\label{lem:strong-bessel-conv}
	Let $\delta_n\uparrow 1$. Then $0\le X(1)-X(\delta_n)\downarrow 0$ and $L^0(\delta_n)\to L^0(1)$ u.c.p. as $n\to \infty$.
\end{lemma}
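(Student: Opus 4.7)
The first claim rests on pathwise SDE comparison: all the processes $Y(\delta)$ are driven by the same $B$, the diffusion coefficient $2\sqrt{y}$ satisfies the Yamada condition, and the drift $\delta$ is increasing in $\delta$, so $Y(\delta_n)\le Y(\delta_{n+1})\le Y(1)$ surely. This already yields $0\le X(1)-X(\delta_n)$ pointwise and monotone in $n$. To upgrade to u.c.p.\ convergence, I would write
\begin{align*}
	Y(1)_t - Y(\delta_n)_t = 2\int_0^t \bigl(\sqrt{Y(1)_s}-\sqrt{Y(\delta_n)_s}\bigr)\,dB_s + (1-\delta_n)t
\end{align*}
and apply Doob's $L^2$ inequality to the martingale part on the right. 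Using the elementary bound $(\sqrt{a}-\sqrt{b})^2\le |a-b|$ together with $Y(\delta_n)\le Y(1)$ and $\E(Y(1)_s)=s$, one obtains
\begin{align*}
	\E\Bigl[\sup_{t\le T}(Y(1)_t-Y(\delta_n)_t)^2\Bigr]\lesssim_T (1-\delta_n)+(1-\delta_n)^2 \longrightarrow 0\;,
\end{align*}
which gives u.c.p.\ convergence $Y(\delta_n)\to Y(1)$, hence $X(\delta_n)\to X(1)$ u.c.p.\ and monotonically.

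For the local time, I would start from the explicit representation from Proposition \ref{prop:localdef} evaluated at $x=0$, namely
\begin{align*}
	L^0(\delta) = \frac{X(\delta)^{2-\delta}}{1-\delta/2} - 2\int_0^{\,\cdot} 1(X(\delta)_s>0)\,X(\delta)_s^{1-\delta}\,dB_s\;.
\end{align*}
For the first term, $X(\delta_n)^{2-\delta_n}\to X(1)$ u.c.p.: indeed $X(\delta_n)\to X(1)$ uniformly on compacts, and a short calculus estimate shows $\sup_{x\in[0,M]}|x^{2-\delta_n}-x|\to 0$ as $\delta_n\uparrow 1$ for every $M\ge 0$. Dividing by $1-\delta_n/2\to 1/2$ then gives convergence of the first term to $2X(1)$ u.c.p.

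For the stochastic integral term, the integrand $1(X(\delta_n)_s>0)\,X(\delta_n)_s^{1-\delta_n}$ is uniformly bounded by $1+X(1)_s$ (which lies in $L^2([0,T]\times\Omega)$ for every $T$, since $\E(Y(1)_s)=s$), and converges $ds\otimes dP$-a.e.\ to $1(X(1)_s>0)$: on $\{X(1)_s>0\}$ the uniform convergence from the first part gives $X(\delta_n)_s>0$ eventually with $X(\delta_n)_s^{1-\delta_n}\to 1$, while $\{s\colon X(1)_s=0\}$ has zero Lebesgue measure almost surely. Dominated convergence for stochastic integrals then yields u.c.p.\ convergence of the stochastic integral to $\int_0^{\,\cdot} 1(X(1)_s>0)\,dB_s$, and combining with the preceding paragraph gives $L^0(\delta_n)\to L^0(1)$ u.c.p. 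The main technical point I expect to require care is the uniform $L^2$-domination needed to apply dominated convergence for stochastic integrals; once in place, the remaining ingredients (SDE comparison, Doob's inequality, and elementary limits of powers) are standard tools.
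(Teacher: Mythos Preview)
Your proposal is correct and follows essentially the same route as the paper: SDE comparison for the monotonicity, the identity $Y(1)_t-Y(\delta_n)_t=2\int_0^t(\sqrt{Y(1)_s}-\sqrt{Y(\delta_n)_s})\,dB_s+(1-\delta_n)t$ together with a Doob/BDG bound on the martingale part (the paper uses $\E[Y(1)_s-Y(\delta_n)_s]=(1-\delta_n)s$ here, which is implicit in your estimate), and then the explicit local-time formula from Proposition~\ref{prop:localdef} combined with stochastic dominated convergence using the bound $X(\delta_n)^{1-\delta_n}\le 1+X(1)$. Your write-up is slightly more detailed on the pointwise convergence of the integrands, but the argument is the same.
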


\begin{proof}
	By comparison theorems for SDEs (see \cite[Chapter IX, Theorem 3.7]{revuz-yor}) we have $Y(\delta_n)\le Y(1)$ for all $n\ge 1$ almost surely.
	Also a.s.\ for all $t\ge 0$
	\begin{align}
		\label{eq:bessel-tanaka-conv}
		\begin{split}
		Y(1)_t - Y(\delta_n)_t = 2\int_0^{t} (\sqrt{Y(1)_s}-\sqrt{Y(\delta_n)_s}\,)\,dB_s + (1-\delta_n)t\;.
		\end{split}
	\end{align}
	By optional stopping (the rigorous justification of this step is easy and left to the reader), we get that $\E(Y(1)_t-Y(\delta_n)_t) = (1-\delta_n)t$ for $t\ge 0$. Clearly the second term on the right-hand side of \eqref{eq:bessel-tanaka-conv} converges to $0$ uniformly on compacts a.s.\ as $n\to \infty$. For the first term on the right-hand side of \eqref{eq:bessel-tanaka-conv}, we use the BDG inequality to write
	\begin{align*}
		&\E\left( \sup_{[0,t]}\left( \int_0^{\,\cdot} (\sqrt{Y(1)_s}-\sqrt{Y(\delta_n)_s}\,) \,dB_s \right)^2 \right) \\
		&\quad \le 4\cdot \E\left( \int_0^t(\sqrt{Y(1)_s}-\sqrt{Y(\delta_n)_s}\,)^2\,ds\right) \\
		&\quad \le 4\int_0^t \E(Y(1)_s-Y(\delta_n)_s) \,ds \le 2(1-\delta_n)t^2\quad\text{for $t\ge 0$}\;.
	\end{align*}
	This yields the first claim. To get the convergence of the local times at $0$, note that by Proposition \ref{prop:localdef}, it suffices to show that
	\begin{align*}
		\int_0^{\,\cdot} 1(X(\delta_n)_s>0)X(\delta_n)_s^{1-\delta_n} \,dB_s \to \int_0^{\,\cdot} 1(X(1)_s>0)\,dB_s\quad\text{u.c.p. as $n\to\infty$}\;.
	\end{align*}
	Since $X(\delta_n)\le X(1)$, $X(\delta_n)^{1-\delta_n}\le 1+X(1)$ for all $n\ge 1$ a.s.\ and the claim follows from stochastic dominated convergence.
\end{proof}

The following lemma yields Theorem \ref{thm:bessel-int-conv} in the $\mu=0$ case (by considering the first and third marginals).

\begin{lemma}
	\label{lem:centred-bes-conv}
	We have $(X(\delta,0),L^0(\delta),I(\delta,0)) \to (X(1,0),L^0(1),I(1,0))$ as $\delta\uparrow 1$ in distribution with respect to the topology of uniform convergence on compacts.
\end{lemma}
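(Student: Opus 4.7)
The plan is to lift the problem to the time-changed scale (via the inverse local time at $0$), where the joint process becomes a two-dimensional Lévy process, establish convergence there, and then transfer back by time inversion. Since $\beta = 0$, the excursion measure $\nu_\delta^0$ is symmetric under $e \mapsto -e$, so the compensator $C(\delta, 0)_\epsilon$ in Proposition \ref{prop:excursion-int} vanishes and $I(\delta, 0) \circ \tau(\delta)$ is a purely symmetric Lévy process. Together with the stable $\delta/2$-subordinator $\tau(\delta)$, the pair $Z(\delta) := (\tau(\delta), I(\delta, 0) \circ \tau(\delta))$ is a two-dimensional Lévy process whose Lévy measure $\pi_\delta$ on $[0, \infty) \times \R$ is the pushforward of $\nu_\delta^0$ under $e \mapsto (\zeta_e, \int_0^{\zeta_e} dt/e_t)$.

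The heart of the argument is to show $Z(\delta) \to Z(1)$ in distribution on Skorokhod space as $\delta \uparrow 1$. Using the Brownian scaling $e \mapsto \lambda^{-1/2} e(\lambda \cdot)$ of Bessel excursions together with the normalisation $\nu_\delta(\sup e \ge x) = (1 - \delta/2)\, x^{\delta - 2}$, one can write $\pi_\delta$ explicitly in terms of a one-dimensional density and verify vague convergence $\pi_\delta \to \pi_1$ on $([0, \infty) \times \R) \setminus \{(0, 0)\}$. Combined with a uniform-in-$\delta$ bound on the truncated second moment of $\int_0^{\zeta_e} dt/e_t$ under $\nu_\delta^0$ restricted to $\{|\cdot| \le 1\}$, the classical Lévy convergence theorem (see e.g.\ \cite[Chapter 15]{kallenberg}) then yields $Z(\delta) \to Z(1)$.

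Since $\tau(1)$ is a.s.\ strictly increasing, its right-continuous inverse $L^0(1)$ is continuous, and the map sending $(\tau, Y)$ in Skorokhod space to $(\tau^{-1}, Y \circ \tau^{-1})$ is continuous at paths with this property. Applying it gives $(L^0(\delta), I(\delta, 0)) \to (L^0(1), I(1, 0))$ in distribution with respect to uniform convergence on compact sets. To incorporate $X(\delta, 0)$, I would use the canonical coupling in which $|X(\delta, 0)| = X(\delta)$ with signs determined by an i.i.d.\ Rademacher field indexed by the local-time parameter; then Lemma \ref{lem:strong-bessel-conv} combined with the previous step and the continuity of reconstructing $X(\delta, 0)$ from the excursion data yields the full joint convergence $(X(\delta, 0), L^0(\delta), I(\delta, 0)) \to (X(1, 0), L^0(1), I(1, 0))$.

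The main obstacle is the Lévy-measure convergence described in the second paragraph: the functional $e \mapsto \int_0^{\zeta_e} dt/e_t$ is singular at excursion endpoints, and the stable index $2 - \delta$ of the limiting jump component approaches the borderline value $1$ where small jumps just fail to be summable. One therefore has to rely crucially on the exact Brownian scaling of Bessel excursions and on a careful length-based truncation rather than on generic vague-convergence arguments for excursion measures.
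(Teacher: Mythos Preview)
Your Lévy-process route is appealing, but the inversion step contains a genuine gap. Writing $Y(\delta) = I(\delta,0)\circ\tau(\delta)$, the composition $Y(\delta)\circ L^0(\delta)$ does \emph{not} equal $I(\delta,0)$. Since $\tau(\delta)$ is the right-continuous inverse of $L^0(\delta)$, one has $\tau(\delta)_{L^0(\delta)_t} = T(\delta)_t := \inf\{s\ge t: X(\delta)_s=0\}$, the right endpoint of the excursion straddling $t$; hence $Y(\delta)\circ L^0(\delta) = I(\delta,0)\circ T(\delta)$, which is constant on each excursion interval and only records the value of $I(\delta,0)$ at the \emph{end} of the current excursion. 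The pair $(\tau(\delta), Y(\delta))$ simply does not carry enough information to reconstruct $I(\delta,0)$ pointwise: during an excursion, $I(\delta,0)$ varies by $\int_{t_-}^t ds/X(\delta,0)_s$, and this within-excursion structure is lost when you pass to the time-changed process. To recover $I(\delta,0)$ you would still have to control $\sup_t |I(\delta,0)_t - I(\delta,0)_{T(\delta)_t}|$ uniformly in $\delta$, i.e.\ the contribution of partial excursions near their singular endpoints---exactly the hard analytic point. Invoking $X(\delta,0)$ in your third step does not help: writing $I(\delta,0)_t = I(\delta,0)_{T(\delta)_t} - \int_t^{T(\delta)_t} ds/X(\delta,0)_s$, the last integral is again singular at $T(\delta)_t$ and its uniform-in-$\delta$ smallness for short excursions is precisely what is missing.

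The paper avoids this by working directly in natural time rather than in local time. It truncates to $\int_0^\cdot 1(|X(\delta,0)_s|>\epsilon)/X(\delta,0)_s\,ds$, observes that for fixed $\epsilon$ this converges as $\delta\uparrow 1$ (a straightforward consequence of Lemma~\ref{lem:strong-bessel-conv} and the sign coupling), and then proves that the truncation error tends to $0$ as $\epsilon\downarrow 0$ \emph{uniformly} in $\delta\in(0,1)$. The uniform bound comes from the explicit local-time identity in Proposition~\ref{prop:localdef} (writing $L^x(\delta)-L^0(\delta)$ as a bounded deterministic piece plus a stochastic integral), a BDG estimate, and the density~\eqref{eq:besseldens}. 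This is exactly the uniform control your inversion argument lacks; if you wanted to salvage the Lévy route you would still need an estimate of this type, at which point the detour through Skorokhod space buys nothing.
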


\begin{proof}
	We have that by Lemma \ref{lem:strong-bessel-conv}, $X(\delta)\uparrow X(1)$ uniformly on compacts as $\delta\uparrow 1$ a.s.\ (the fact that the u.c.p.\ convergence implies uniform convergence a.s.\ follows from the monotonicity). Hence, for each excursion of $X(1)$,  there are excursions of $X(\delta)$ for each $\delta$, the endpoints of which approximate the endpoints of the excursion of $X(1)$ we started with. From Lemma \ref{lem:strong-bessel-conv} it is then easy to deduce that for all $\epsilon>0$,
	\begin{align*}
		\left(X(\delta,0), L^0(\delta),\int_0^{\,\cdot} \frac{1(|X(\delta,0)_t|>\epsilon)}{X(\delta,0)_t}\,dt\right) \stackrel{d}{\to} \left(X(1,0), L^0(1),\int_0^{\,\cdot} \frac{1(|X(1,0)_t|>\epsilon)}{X(1,0)_t}\,dt\right)
	\end{align*}
	as $\delta \uparrow 1$ with respect to the topology of uniform convergence on compacts. To deduce the result, it therefore suffices to prove that for all $T\ge 0$,
	\begin{align*}
		\E\left(\,\sup_{[0,T]} \left|  \int_0^{\,\cdot} \frac{1(|X(\delta,0)_t|>\epsilon)}{X(\delta,0)_t}\,dt - I(\delta,0) \right|\, \right) \to 0\quad\text{as $\epsilon\to 0$}
	\end{align*}
	uniformly in $\delta\in (0,1)$. By Proposition \ref{prop:compensatedint} for the first inequality and Proposition \ref{prop:asymmetriclocal} together with the definition of $I(\delta,1)$ for the second inequality below, we get
	\begin{align*}
		\E\left(\,\sup_{[0,T]} \left|  \int_0^{\,\cdot} \frac{1(|X(\delta,0)_t|>\epsilon)}{X(\delta,0)_t}\,dt - I(\delta,0) \right|\, \right) &\le 2\,\E\left(\,\sup_{[0,T]} \left|  \int_0^{\,\cdot} \frac{1(X(\delta)_t>\epsilon)}{X(\delta)_t}\,dt - I(\delta,1) \right|\, \right) \\
		&\le 2\int_{0}^\epsilon x^{\delta - 2}\,\E\left( \sup_{[0,T]} |L^x(\delta) - L^0(\delta)| \right)\,dx\;.
	\end{align*}
	By Proposition \ref{prop:localdef}, almost surely
	\begin{align*}
		L^x(\delta) - L^0(\delta) = \frac{-(x\wedge X(\delta))^{2-\delta}}{1-\delta/2} + 2 \int_0^{\,\cdot} 1(X(\delta)_t\in (0,x])X(\delta)_t^{1-\delta}\,dB_t\;.
	\end{align*}
	Now by the BDG inequality we get
	\begin{align*}
		\E\left( \sup_{[0,T]} |L^x(\delta) - L^0(\delta)| \right) &\lesssim \frac{x^{2-\delta}}{1-\delta/2} + \E\left(  \int_0^T 1(X(\delta)_t\in (0,x])X(\delta)_t^{2(1-\delta)}\,dt \right)^{1/2} \\
		&\le \frac{x^{2-\delta}}{1-\delta/2} + \left(  \int_0^T x^{2(1-\delta)}\P(X(\delta)_t\le x)\,dt \right)^{1/2}
	\end{align*}
	By the expression for the density of $X(\delta)_t$ given in \eqref{eq:besseldens}, $\P(X(\delta)_t\le x)\lesssim (x/\sqrt{t}\,)^\delta$ uniformly in $\delta\in (0,1)$. The claim is now immediate.
\end{proof}

\begin{lemma}
	\label{lem:constant-loc-bessel}
	Fix $\mu\ge 0$. Then $(-I(\delta,1)_{\tau(\delta)_{\mu(1-\delta)l}}\colon l\ge 0)\to (\mu l\colon l\ge 0)$  u.c.p. as $\delta\uparrow 1$. Moreover
	\begin{align*}
		\nu_\delta\left(\int_0^{\zeta_e} \frac{dt}{e_t} \ge 1\right)\to 1\quad\text{as $\delta\uparrow 1$}\;.
	\end{align*}
\end{lemma}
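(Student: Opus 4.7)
The plan is to exploit the explicit identity valid for $\delta<1$, namely $I(\delta,1)_{\tau(\delta)_l} = 2B_{\tau(\delta)_l}/(1-\delta)$, which follows from Proposition \ref{prop:compensatedint} together with $X(\delta)_{\tau(\delta)_l}=0$. First I would analyze the limiting case $\delta=1$: from the formula $L^0(1) = 2(X(1)-B)$ given by Proposition \ref{prop:localdef}, the process $X(1)$ is the Skorokhod reflection of $B$ at the origin, so Skorokhod's lemma gives $L^0(1)_t/2 = \sup_{s\le t}(-B_s)^+$. This identifies $\tau(1)_l$ as the first hitting time of $-l/2$ by $B$, whence $B_{\tau(1)_l} = -l/2$ deterministically. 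From Lemma \ref{lem:strong-bessel-conv} we have $L^0(\delta_n)\to L^0(1)$ u.c.p., and by strict monotonicity of $L^0(1)$ the inverses satisfy $\tau(\delta_n)_l\to\tau(1)_l$ a.s.\ in the natural coupling, yielding $B_{\tau(\delta_n)_l}\to -l/2$ a.s.

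To identify $c(\delta)\to 1$, I would combine this with a Laplace transform calculation. By Proposition \ref{prop:excursion-int}, $I(\delta,1)\circ\tau(\delta)$ is a spectrally positive Lévy process whose Lévy measure has tail $u^{\delta-2}c(\delta)$ on $u>0$, giving Laplace exponent $\psi_\delta(\lambda) = c(\delta)\,\Gamma(\delta)\,\lambda^{2-\delta}/(1-\delta)$ via the standard integral $\int_0^\infty(e^{-\lambda u}-1+\lambda u)u^{-\alpha-1}\,du = \lambda^\alpha\Gamma(2-\alpha)/(\alpha(\alpha-1))$. The identity from the previous paragraph then yields
\begin{align*}
\E\bigl[e^{-\mu B_{\tau(\delta)_l}}\bigr] = \exp\bigl(l\,c(\delta)\,\Gamma(\delta)\,(\mu/2)^{2-\delta}(1-\delta)^{1-\delta}\bigr)\;.
\end{align*}
Provided uniform integrability holds, the a.s.\ limit $B_{\tau(\delta_n)_l}\to -l/2$ forces the left-hand side to $e^{\mu l/2}$, and equating with the right-hand side forces $\lim_{\delta\uparrow 1}c(\delta)=1$, establishing the second assertion of the lemma.

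For the first assertion, the process $l\mapsto -I(\delta,1)_{\tau(\delta)_{\mu(1-\delta)l}}$ is a spectrally negative Lévy process with Laplace exponent $\mu\,c(\delta)\,\Gamma(\delta)\,\lambda^{2-\delta}$, which converges to $\mu\lambda$ as $\delta\uparrow 1$. By the standard criterion for convergence of Lévy processes through their Laplace exponents, this yields convergence in distribution in the Skorokhod topology to the continuous, deterministic process $l\mapsto \mu l$, which upgrades directly to u.c.p.\ convergence. The main technical obstacle is the uniform integrability used to pass to the limit in the Laplace transform identity: it reduces to a uniform upper bound on $c(\delta)$ for $\delta$ near $1$. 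For this I would invoke the Williams decomposition of Bessel excursions — conditionally on the maximum $m$, under $\nu_\delta$ the excursion consists of two i.i.d.\ Bessel$(4-\delta)$ paths from $0$ to $m$ — giving the representation $c(\delta) = (1-\delta/2)\,\E[(A+A')^{2-\delta}]$ with $A, A'$ i.i.d.\ copies of $\int_0^{T_1}dt/X_t$ for $X$ a Bessel$(4-\delta)$ started at $0$. A standard argument using the ODE $\mathcal{L}V + 1/x = 0$ with $\mathcal{L}$ the Bessel$(4-\delta)$ generator and boundary condition $V(1)=0$ yields $\E[A] = 2/(3-\delta)$ together with comparable bounds on higher moments, which is ample for the required uniform integrability.
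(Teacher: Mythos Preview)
Your approach is essentially correct and reaches the same conclusions, but it is organized rather differently from the paper and carries some extra weight.

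\textbf{Comparison with the paper.} The paper proves the first assertion directly, without any information about $c(\delta):=\nu_\delta(\int_0^{\zeta_e}dt/e_t\ge 1)$, by using Bessel scaling: writing $\alpha=((1-\delta)\mu)^{1/(\delta-2)}$ one obtains
\[
\bigl(-I(\delta,1)_{\tau(\delta)_{\mu(1-\delta)l}}\colon l\ge 0\bigr)\stackrel{d}{=} 2(1-\delta)^{(\delta-1)/(2-\delta)}\mu^{1/(2-\delta)}\bigl(X(\delta)_{\tau(\delta)_l}-B_{\tau(\delta)_l}\colon l\ge 0\bigr),
\]
and since the prefactor tends to $\mu$, one only needs $X(\delta)_{\tau(\delta)_l}-B_{\tau(\delta)_l}\to l/2$, which follows from $X(1)-B=L^0(1)/2$ and Lemma~\ref{lem:strong-bessel-conv}. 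The second assertion is then deduced from the first by a characteristic-function computation: since $|e^{i\theta X}|\le 1$, no uniform integrability is required. By contrast, you reverse the order: you bound $c(\delta)$ via Williams' decomposition and an ODE computation, use this to justify passing to the limit in a Laplace identity to get $c(\delta)\to 1$, and finally deduce the first assertion through convergence of Laplace exponents of the associated L\'evy processes. Your route gives more explicit information (indeed your Williams formula $c(\delta)=(1-\delta/2)\,\E[(A+A')^{2-\delta}]$ with $\E[A]=2/(3-\delta)$ already yields $c(\delta)\to 1$ directly, making the Laplace-transform step somewhat redundant), but the paper's scaling trick is shorter and sidesteps the uniform-integrability issue altogether.

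\textbf{One small correction.} You claim ``strict monotonicity of $L^0(1)$'' to pass from $L^0(\delta_n)\to L^0(1)$ to $\tau(\delta_n)_l\to\tau(1)_l$. This is false: $L^0(1)$ is Brownian local time and is constant on each excursion interval. What is true (and sufficient) is that for each fixed $l$, almost surely $l$ is a continuity point of $\tau(1)$, so the right-continuous inverses converge at $l$; equivalently, $\tau(1)_l=\tau(1)_{l-}$ a.s.\ for fixed $l$. This repairs the step without further change.
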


\begin{proof}
	Since the limit is deterministic, it suffices to show convergence in distribution. Let $\alpha=((1-\delta)\mu)^{1/(\delta-2)}$. From the definitions we have the scaling property
	\begin{gather*}
		(X(\delta),B,L^0(\delta)) \stackrel{d}{=} (X(\delta)_{\alpha^2(\cdot)}/\alpha,B_{\alpha^2(\cdot)}/\alpha,L^0(\delta)_{\alpha^2(\cdot)}/\alpha^{2-\delta})\;,\\
		\text{and hence}\quad (X(\delta),B,\tau(\delta)) \stackrel{d}{=} (X(\delta)_{\alpha^2(\cdot)}/\alpha,B_{\alpha^2(\cdot)}/\alpha,\tau(\delta)_{\alpha^{2-\delta}(\cdot)}/\alpha^2)
	\end{gather*}
	since $\tau(\delta)_l=\inf\{t\ge 0\colon L^0(\delta)_t>l\}$. Therefore by Proposition \ref{prop:compensatedint},
	\begin{align*}
		(-I(\delta,1)_{\tau(\delta)_{\mu(1-\delta)l}}\colon l\ge 0) &= \frac{2}{1-\delta}\,(X(\delta)_{\tau(\delta)_{\mu(1-\delta)l}} -B_{\tau(\delta)_{\mu(1-\delta)l}}\colon l\ge 0) \\
		&\stackrel{d}{=} \frac{2/\alpha}{1-\delta}(X(\delta)_{\tau(\delta)_l}-B_{\tau(\delta)_l}\colon l\ge 0) \\
		&= 2(1-\delta)^{(\delta-1)/(2-\delta)}\mu^{1/(2-\delta)}(X(\delta)_{\tau(\delta)_l}-B_{\tau(\delta)_l}\colon l\ge 0) \;.
	\end{align*}
	Since $(1-\delta)^{(\delta-1)/(2-\delta)}\mu^{1/(2-\delta)}\to \mu$ as $\delta\uparrow 1$ it suffices to show $(X(\delta_n)_{\tau(\delta_n)_l}-B_{\tau(\delta_n)_l}\colon l\ge 0)\to (l/2\colon l\ge 0)$ u.c.p. when $\delta_n\uparrow 1$. Fix $l'\ge 0$. Then
	\begin{align*}
		\sup_{l\in [0,l']} |X(\delta_n)_{\tau(\delta_n)_l} - B_{\tau(\delta_n)_l} - l/2| &= \sup_{[0,\tau(\delta_n)_{l'}]} |X(\delta_n)-B - L^0(\delta_n)/2| \\
		&\le \sup_{[0,\tau(\delta_n)_{l'}]} \left(|X(\delta_n)-X(1)| + |L^0(\delta_n)-L^0(1)|/2\right)\quad\text{a.s.}
	\end{align*}
	where the inequality follows from Proposition \ref{prop:localdef} which entails that $X(1)-B = L^0(1)/2$ a.s. It is not difficult to see that $L^0(1)_t\to \infty$ as $t\to\infty$ a.s.; this together with the u.c.p. convergence $L^0(\delta_n)\to L^0(1)$ as $n\to \infty$ from Lemma \ref{lem:strong-bessel-conv} then readily implies that $(\tau(\delta_n)_{l'})$ is tight and the result then follows from Lemma \ref{lem:strong-bessel-conv}.
	
	To deduce the additional result, by excursion theory and the first part of the lemma with $\mu=l=1$, we obtain
	\begin{align*}
		\exp\left((1-\delta)\nu_\delta\left( e^{-i\int_0^{\zeta_e}dt/e_t}-1+i\int_0^{\zeta_e}dt/e_t \right)\right)=\E\left(e^{-iI(\delta,1)_{\tau(\delta)_{1-\delta}}}\right)\to e^i\quad\text{as $\delta\uparrow 1$ }\;.
	\end{align*}
	By Proposition \ref{prop:excursion-int} hence
	\begin{align*}
		\nu_\delta\left( \int_0^{\zeta_e} \frac{dt}{e_t} \ge 1 \right) (1-\delta)\int_0^\infty x^{\delta-2}(1-e^{-ix})\,dx \to 1\quad\text{as $\delta\uparrow 1$}\;.
	\end{align*}
	Note that
	\begin{align*}
		(1-\delta)\int_0^\infty x^{\delta-2}(1-e^{-ix})\,dx =  (1-\delta)\int_0^1 x^{\delta-2}(1-e^{-ix})\,dx +  1 - (1-\delta)\int_1^\infty x^{\delta-2}e^{-ix}\,dx\;.
	\end{align*}
	The first term on the right-hand side goes to $0$ as $\delta\uparrow 1$ since $|1-e^{-ix}|\lesssim x$ for $x>0$ and the second integral on the right-hand side also goes to $0$ as $\delta\uparrow 1$ as can be seen by looking at the cancellations between the contributions at $x$ and $x+\pi$ to the integral. The result follows.
\end{proof}

\begin{proof}[Proof of Theorem \ref{thm:bessel-int-conv}]
	By symmetry, it suffices to consider the case $\mu\le 0$. Let $\beta = -\mu(1-\delta)/2$ for $\delta$ sufficiently close to $1$. We couple $X(\delta,\beta)$ and $X(\delta,0)$ as follows: For each negative excursion of $X(\delta,0)$ we multiply the excursion by $-1$ with probability $\beta$ (and leave it unchanged otherwise) independently for each such negative excursion. By this excursion theory description
	\begin{align*}
		\P(\sup\nolimits_{\,[0,T]} |X(\delta,\beta) - X(\delta,0)|&\ge \epsilon ) - \P(T>\tau(\delta)_l) \le \P(\sup\nolimits_{\,[0,\tau(\delta)_l]} |X(\delta,\beta) - X(\delta,0)|\ge \epsilon ) \\
		&= 1-e^{-\beta/2\cdot \nu_\delta(\sup e \ge \epsilon)l} = 1-e^{-\beta/2\cdot (1-\delta/2)\epsilon^{\delta-2}l}
	\end{align*}
	using \eqref{eq:bessel-norm} to obtain the last equality. Since $L^0(\delta)\to L^0(1)$ u.c.p. as $\delta \uparrow 1$ we obtain that $X(\delta,-\mu(1-\delta)/2) - X(\delta,0)\to 0$ u.c.p. as $\delta \uparrow 1$. 
	By the construction of this coupling we see that
	\begin{align*}
		\sum_{\ell\ge 0\colon \tau(\delta)_\ell > \tau(\delta)_{\ell-}} \delta_{\left(\ell, \,X(\delta,\beta)_{(\tau(\delta)_{\ell-}+\,\cdot\,)\wedge \tau(\delta)_\ell}\right)} - \sum_{\ell\ge 0\colon \tau(\delta)_\ell > \tau(\delta)_{\ell-}} \delta_{\left(\ell, \,X(\delta,0)_{(\tau(\delta)_{\ell-}+\,\cdot\,)\wedge \tau(\delta)_\ell}\right)} \stackrel{d}{=} P_\delta^+ - P_\delta^-
	\end{align*}
	where $P_\delta^+$ is a Poisson point processes of intensity $\beta/2\cdot \lambda_+\otimes \nu_\delta$ and $P_\delta^-=(e\mapsto -e)_*P_\delta^+$. Therefore by the excursion description of the compensated integrals given in Proposition \ref{prop:excursion-int},
	\begin{gather}
		\label{eq:i-delta-conv}
		\begin{split}
		&\quad (I(\delta,\beta)_{\tau(\delta)_l} - I(\delta,0)_{\tau(\delta)_l}\colon l\ge 0) \stackrel{d}{=} (I(\delta,1)_{\tau(\delta)_{\beta l}}\colon l\ge 0)\quad\text{for $\delta<1$}\\
		&\text{and hence}\quad (I(\delta,-\mu(1-\delta)/2)_{\tau(\delta)_l} - I(\delta,0)_{\tau(\delta)_l}\colon l\ge 0) \to (\mu l/2\colon l\ge 0)\quad\text{u.c.p. as $\delta\uparrow 1$}
		\end{split}
	\end{gather}
	by the first part of Lemma \ref{lem:constant-loc-bessel}. Let $T(\delta)_t = \inf\{s\ge t\colon X(\delta)_s=0\}$ for $t\ge 0$. We have $\tau(\delta)\circ L^0(\delta) = T(\delta)$ a.s.\ for $\delta<1$ and hence by \eqref{eq:i-delta-conv} and $L^0(\delta)\to L^0(1)$ u.c.p. as $\delta\uparrow 1$,
	\begin{align*}
		(I(\delta,-\mu(1-\delta)/2)_{T(\delta)_t} - I(\delta,0)_{T(\delta)_t} - \mu L^0(\delta)_t/2\colon t\ge 0) \to 0\quad\text{u.c.p. as $\delta\uparrow 1$}\;.
	\end{align*}
	By Lemma \ref{lem:centred-bes-conv} we therefore only need to establish that
	\begin{align}
		\label{eq:bessel-final}
		\begin{split}
		A(\delta)&:= (I(\delta,-\mu(1-\delta)/2)_{T(\delta)_t} - I(\delta,0)_{T(\delta)_t}\colon t\ge 0) \\
		&\qquad - (I(\delta,-\mu(1-\delta)/2)_{t} - I(\delta,0)_{t}\colon t\ge 0) \to 0
		\end{split}
	\end{align}
	u.c.p. as $\delta\uparrow 1$. Note that (recalling the coupling of $X(\delta,0)$ and $X(\delta,-\mu(1-\delta)/2)$ as defined at the beginning of the proof),
	\begin{align*}
		A(\delta)_t = 2\cdot 1(X(\delta,-\mu(1-\delta)/2)_t>0>X(\delta,0)_t)\,\int_t^{T(\delta)_t} \frac{ds}{X(\delta)_s}\quad\text{for all $t\ge 0$ a.s.}
	\end{align*}
	Fix $\epsilon>0$ and $T\ge 0$. Then for $l\ge 0$ we get by excursion theory
	\begin{align*}
		\P(\sup\nolimits_{\,[0,T]} &A(\delta)\ge \epsilon) - \P(T>\tau(\delta)_l ) \le \P(\sup\nolimits_{\,[0,\tau(\delta)_l]} A(\delta)\ge \epsilon) \\
		&= 1- \exp\left(-\frac{-\mu(1-\delta)}{2}\, l\cdot \nu_\delta\left(\int_0^{\zeta_e}ds/e_s\ge \epsilon/2\right)\right) \to 0\quad\text{as $\delta\uparrow 1$}
	\end{align*}
	by Proposition \ref{prop:excursion-int} and the second part of Lemma \ref{lem:constant-loc-bessel}. The claim \eqref{eq:bessel-final} now follows since $L^0(\delta)\to L^0(1)$ u.c.p. as $\delta\uparrow 1$.
\end{proof}

\subsection {Reformulation in terms of generalized SLE processes}

So far in this section, we have not discussed the SLE aspects of all these results on Bessel processes. In the remaining few paragraphs, we will recall the definition of generalized SLE with force points as first presented in \cite{shef-cle}; we also refer to this paper and \cite{werner-wu-explorations} for the motivation behind the definition. In the following, recall the definition of the processes $X(\delta,\beta)$, $I(\delta,\beta)$ for $\delta<1$ and $I(1,\mu)$ as introduced above.

Suppose that $\kappa\in (8/3,4)$ and $\beta\in [-1,1]$. Let
\begin{align*}
	O(\kappa,\beta) = \frac{-2}{\sqrt{\kappa}}\,I(3-8/\kappa,\beta)\quad\text{and}\quad \xi(\kappa,\beta) = O(\kappa,\beta) + \sqrt{\kappa}\,X(3-8/\kappa,\beta)\;.
\end{align*}
$\SLE_\kappa^\beta(\kappa-6)$ is the Loewner chain associated to the driving function $\xi(\kappa,\beta)$. It is a very non-trivial result established in \cite[Theorem 7.4]{cle-percolations} that this Loewner chain is generated by a continuous curve. Analogously, if $\kappa=4$ and $\mu\in \R$ we let
\begin{align*}
	O(4,\mu) = - I(1,-\mu)\quad\text{and}\quad \xi(4,\mu) = O(4,\mu) + 2 X(1,0)\;.
\end{align*}
$\SLE_4^{\langle \mu\rangle}(-2)$ is the Loewner chain associated to $\xi(4,\mu)$ and this Loewner chain is generated by a continuous curve as stated in \cite[Proposition 5.3]{cle-percolations}. Note that the sign convention for $\mu$ is chosen so that $\xi(4,\mu)$ is non-decreasing in $\mu$ and that the definition given here matches the one from the introduction by Remark \ref{remark:bm-local-equivalence}. Crucially, Theorem \ref{thm:bessel-int-conv} now immediately implies the following result.

\begin{prop}
	\label{prop:general-sle-conv}
	Consider $\mu\in \R$. Then in the notation above,
	\begin{align*}
		(\xi(\kappa,\mu(4/\kappa-1)),O(\kappa,\mu(4/\kappa-1)))\to (\xi(4,\mu),O(4,\mu))
	\end{align*}
	in distribution with respect to uniform convergence on compacts as $\kappa\uparrow 4$.
\end{prop}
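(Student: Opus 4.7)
The plan is to observe that this proposition is essentially a direct consequence of Theorem \ref{thm:bessel-int-conv} combined with the continuous mapping theorem, once the parameters are matched correctly.

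First I would record the key algebraic identity between parameters. Setting $\delta = 3 - 8/\kappa$, so that $\delta \uparrow 1$ as $\kappa \uparrow 4$, one computes
\begin{align*}
	\tfrac{1-\delta}{2} = \tfrac{1}{2}\bigl(1-(3-8/\kappa)\bigr) = \tfrac{4}{\kappa}-1\,,
\end{align*}
hence if we set $\beta = \beta(\kappa) = \mu(4/\kappa-1)$ as in the statement, then $\beta = \mu(1-\delta)/2$. Moreover, $\sqrt{\kappa}\to 2$ and $-2/\sqrt{\kappa}\to -1$ as $\kappa\uparrow 4$.

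Next I would apply Theorem \ref{thm:bessel-int-conv} with $-\mu$ in place of $\mu$ (this is legitimate since the theorem is stated for all real $\mu$). This gives the joint convergence
\begin{align*}
	\bigl(X(\delta,\mu(1-\delta)/2),\,I(\delta,\mu(1-\delta)/2)\bigr) \to \bigl(X(1,0),\,I(1,-\mu)\bigr)
\end{align*}
in distribution with respect to uniform convergence on compacts, as $\delta \uparrow 1$ (equivalently $\kappa\uparrow 4$). In other words, $\bigl(X(3-8/\kappa,\beta),\,I(3-8/\kappa,\beta)\bigr)$ converges jointly to $\bigl(X(1,0),\,I(1,-\mu)\bigr)$.

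Finally I would invoke the continuous mapping theorem: the maps $(x,i)\mapsto -\frac{2}{\sqrt{\kappa}}\,i$ and $(x,i)\mapsto -\frac{2}{\sqrt{\kappa}}\,i + \sqrt{\kappa}\,x$ depend continuously on the tuple and on $\kappa$ in the topology of uniform convergence on compacts, so
\begin{align*}
	O(\kappa,\beta) &= -\tfrac{2}{\sqrt{\kappa}}\,I(3-8/\kappa,\beta) \to -I(1,-\mu) = O(4,\mu)\,,\\
	\xi(\kappa,\beta) &= O(\kappa,\beta) + \sqrt{\kappa}\,X(3-8/\kappa,\beta) \to -I(1,-\mu) + 2\,X(1,0) = \xi(4,\mu)\,,
\end{align*}
jointly, which is the claim. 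There is no real obstacle here: all the hard work has been absorbed into Theorem \ref{thm:bessel-int-conv}, and this proposition is simply the SLE-level reformulation obtained by matching the asymmetry parameter $\beta$ to the drift $\mu$ via the identity $\beta = \mu(1-\delta)/2$ and then inserting the scalar prefactors that appear in the definitions of $O(\kappa,\beta)$ and $\xi(\kappa,\beta)$.
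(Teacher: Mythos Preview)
Your proposal is correct and matches the paper's approach exactly: the paper simply states that Theorem \ref{thm:bessel-int-conv} immediately implies this proposition, and your argument spells out precisely this implication by matching the parameters via $\delta=3-8/\kappa$, $\beta=\mu(1-\delta)/2$, applying the theorem with $-\mu$, and then using the continuous mapping theorem for the linear combinations defining $O$ and $\xi$.
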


\section{Wrapping up the proof of the main result}
\label{sec:mainresult}

This final section is devoted to the proof of the main result Theorem \ref{thm:main-result}. We begin by mentioning a consequence of \cite{cle-percolations} and \cite{msw-simple}. This proposition is illustrated in Figure \ref{fig:bcle-duality}. Note that this is the step where the very non-trivial relation between the asymmetry parameter in the driving function of the exploration process of a $\CLE_\kappa$ and the driving function of its trunk in the case $\kappa\in (8/3,4)$ enters the picture.

\begin{prop}
	\label{prop:bcle-duality-import}
	Consider $\kappa\in (8/3,4)$, $\beta\in [-1,1]$ and let $\gamma\sim \SLE_\kappa^\beta(\kappa-6)$; moreover, let $\gamma_L$ and $\gamma_R$ be the left and right boundary of $\gamma$ respectively. Let $\kappa'=16/\kappa$ and define $\rho'=\rho'(\kappa,\beta)\in [\kappa'-6,0]$ to be the unique value such that
	\begin{align*}
		\tan(\pi\rho'/2)= \frac{\sin(\pi\kappa'/2)}{1+\cos(\pi\kappa'/2)-2/(1-\beta)}
	\end{align*}
	and let $ \tilde{\rho}_L=-2-\kappa\rho'/4$ and $\tilde{\rho}_R=3\kappa/2-6+\kappa\rho'/4$. 
	Also consider $\eta\sim \SLE_{\kappa'}(\rho'(\kappa,\beta),\kappa'-6-\rho'(\kappa,\beta))$ and conditionally on $\eta$, sample two conditionally independent curves $(\eta_L,\eta_R)$ from $0$ to $\infty$ as follows:
	\begin{itemize}
		\item Let $\eta_R$ be the concatenation of independent $\SLE_\kappa(\tilde{\rho}_R,0)$ curves in each of the right complementary components of $\eta$ that intersect the real line (always from the leftmost real point to the rightmost real point of the complementary component).
		\item Similarly let $\eta_L$ be the concatenation of independent $\SLE_\kappa(0,\tilde{\rho}_L)$ curves in the left complementary components of $\eta$ that intersect the real line (in this case always from the rightmost to the leftmost real point in each component).
	\end{itemize}
	Then $(\gamma_L,\gamma_R)$ and $(\eta_L,\eta_R)$ have the same law.
\end{prop}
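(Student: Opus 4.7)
The plan is to combine two inputs from the prior literature with a short imaginary-geometry bookkeeping step, with the trunk $\eta$ serving as the intermediate object through which the whole description is organized. The key point is that $\gamma$ can be reconstructed from $\eta$ by filling in each complementary component of $\eta$ with an independent BCLE of the appropriate type, and that the left and right boundaries of $\gamma$ are then exactly the outer boundaries of these BCLEs seen from the real line.

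First I would invoke \cite{cle-percolations} to obtain the ``BCLE on top of a trunk'' decomposition: conditionally on its trunk $\eta$, which is marginally an $\SLE_{\kappa'}(\rho',\kappa'-6-\rho')$ for some $\rho'=\rho'(\kappa,\beta)\in [\kappa'-6,0]$, the exploration $\gamma$ is obtained by attaching in each right (resp.\ left) complementary component of $\eta$ an independent BCLE$_\kappa$ configuration of the appropriate orientation, traced in chronological order. This gives the geometric structure; what it does not fix is the value of $\rho'$ as a function of $(\kappa,\beta)$. For that I would quote directly the KPZ/LQG computation of \cite{msw-simple}, which is precisely the relation $\tan(\pi\rho'/2)=\sin(\pi\kappa'/2)/(1+\cos(\pi\kappa'/2)-2/(1-\beta))$ appearing in the statement.

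Second I would identify $\gamma_R$ (and analogously $\gamma_L$) inside a single complementary component. Fix a right complementary component $D$ of $\eta$ with leftmost (resp.\ rightmost) real boundary point $a$ (resp.\ $b$). The portion of $\gamma_R$ inside $D$ is, by construction, the outer boundary in $D$ traversed from $a$ to $b$ of the BCLE configuration attached in $D$, and by SLE duality this outer boundary is an $\SLE_\kappa$-type curve whose force-point parameters are determined by the boundary-condition arithmetic of Lemma~\ref{lem:ig-right-boundary} (applied to the imaginary-geometry description of BCLE inherited from \cite{cle-percolations,msw-non-simple}). Carrying out that arithmetic and simplifying yields exactly an $\SLE_\kappa(\tilde\rho_R,0)$ with $\tilde\rho_R=3\kappa/2-6+\kappa\rho'/4$, and symmetrically an $\SLE_\kappa(0,\tilde\rho_L)$ with $\tilde\rho_L=-2-\kappa\rho'/4$ on the left side. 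Conditional independence across complementary components is automatic from the BCLE structure.

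Putting the two pieces together gives a description of the conditional law of $(\gamma_L,\gamma_R)$ given $\eta$ that exactly matches the conditional law of $(\eta_L,\eta_R)$ given $\eta$ in the statement, so the two pairs have the same law. The main obstacle I expect is the third step: tracking the boundary-condition heights through the BCLE/SLE duality carefully enough to land on the specific expressions for $\tilde\rho_L$ and $\tilde\rho_R$. The LQG input (the tangent formula for $\rho'$) is used as a black box from \cite{msw-simple}, so there is no analytic work to do there; the only serious check is the boundary-condition arithmetic, which is routine but easy to get wrong by an additive shift and where the non-trivial duality $\kappa\leftrightarrow 16/\kappa$ enters most visibly.
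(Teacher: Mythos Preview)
Your proposal is correct and follows essentially the same route as the paper: invoke the trunk/BCLE decomposition of \cite[Theorem~7.4]{cle-percolations} together with the explicit $\rho'(\kappa,\beta)$ formula from \cite{msw-simple}, and then reduce the two-layer BCLE boundary in each complementary component to a single $\SLE_\kappa(\tilde\rho_R,0)$ (resp.\ $\SLE_\kappa(0,\tilde\rho_L)$) via imaginary-geometry bookkeeping. The only refinement worth flagging is that the reduction step is not just Lemma~\ref{lem:ig-right-boundary} alone but the combination packaged in Lemma~\ref{lem:ig-main-iteration} (i.e.\ duality for the right boundary of the $\SLE_{\kappa'}$ branch \emph{plus} the angle-coupling of Lemma~\ref{lem:ig-angles} to absorb the inner $\SLE_\kappa$ curves), which is exactly the ``additive shift'' pitfall you anticipated.
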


\begin{figure}
	\centering
	\def\svgwidth{0.8\columnwidth}
	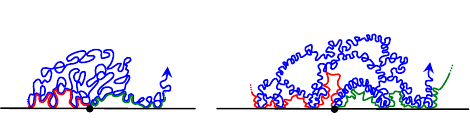
	\caption{Illustration of Proposition \ref{prop:bcle-duality-import}. \emph{Left.} $\gamma_L$ and $\gamma_R$ are the left and right boundaries of $\gamma$ respectively. \emph{Right.} We first sample $\eta$ and then $\eta_L$ (resp. $\eta_R$) as a concatenation of $\SLE_\kappa(0,\tilde{\rho}_L)$ curves (resp. $\SLE_\kappa(\tilde{\rho}_R,0)$ curves) in the boundary touching complementary components of $\eta$ left of (resp. right of) $\eta$.}
	\label{fig:bcle-duality}
\end{figure}

\begin{proof}
	Let
	\begin{align}
		\label{eq:relations}
		\rho_R := -\frac{\kappa}{4}\,(\rho'+2)\quad\text{and}\quad \rho_L:=\frac{\kappa}{2}-4-\rho_R = -\frac{\kappa}{4}\left((\kappa'-6-\rho')+2\right)
	\end{align}
	and note that $\rho_L=\kappa-6-\tilde{\rho}_L$ and $\rho_R=\kappa-6-\tilde{\rho}_R$. Conditionally on $\eta$, sample two conditionally independent curves $(\nu_L,\nu_R)$ from $0$ to $\infty$ as follows:
	\begin{itemize}
		\item In each of the right complementary components of $\eta$ intersecting the real line, draw an independent $\SLE_{\kappa'}(\rho',0)$ from the rightmost to the leftmost real point of the complementary component. In each of the complementary components between this curve and $\eta$, we draw an independent $\SLE_\kappa(\kappa-6-\rho_R,\rho_R)$ from the first to the last point on $\eta$ that is in the complementary component. We call the concatenation of all these simple curves $\nu_R$.
		\item Similarly, in each of the left complementary components of $\eta$ intersecting the real line, we first draw an $\SLE_{\kappa'}(0,\kappa'-6-\rho')$ from the leftmost to the rightmost boundary point; in each of the complementary components between this curve and $\eta$, we sample an independent $\SLE_\kappa(\rho_L,\kappa-6-\rho_L)$ curve from the first to the last point on $\eta$ that is in the particular complementary component. The concatenation of all the simple curves so constructed is called $\nu_L$.
	\end{itemize}
	It is a direct consequence of \cite[Theorem 7.4]{cle-percolations} (the inexplicit parameter there is given in \cite[Theorem 1.6]{msw-simple}) that $(\gamma_L,\gamma_R)$ has the same law as $(\nu_L,\nu_R)$. In the terminology of \cite{cle-percolations}, we are describing the construction of the first layer of loops of a $\BCLE_{\kappa'}(\rho')$. The fact that $(\nu_L,\nu_R)$ and $(\eta_L,\eta_R)$ have the same law, now follows from Lemma \ref{lem:ig-main-iteration} and \eqref{eq:relations}. 
\end{proof}

\begin{cor}
	\label{cor:bcle-input-stopped}
	Consider $\gamma\sim \SLE_\kappa^\beta(\kappa-6)$ from $0$ to $\infty$ in $\H$ and $\gamma_*\sim \SLE_\kappa^\beta(\kappa-6)$ from $0$ to $1$ in $\H$ where $\kappa\in (8/3,4)$ and $\beta\in [-1,1]$. Moreover set $\kappa'=16/\kappa$ and recall the definition of $\rho'(\kappa,\beta)$ from Proposition \ref{prop:bcle-duality-import}. Let $\tau$ be the swallowing time of $1$ for the curve $\gamma$ and $K_*$ be the chordal hull associated to $\gamma_*([0,\infty))$. Let $(\xi,O^\pm)$ be the processes defining $\eta\sim \SLE_{\kappa'}(\rho',\kappa'-6-\rho')$ as in Section \ref{subsec:sle-force-points} and let $\zeta_1$ be the swallowing time for the point $1$ by $\eta$. Also let $\nu\sim \SLE_\kappa(0,-2-\kappa\rho'/4)$ from $0$ to $1$ in $\H$ be independent of $\eta$. Then
	\begin{align*}
		2\tau \stackrel{d}{=} \hcap(K_*) \stackrel{d}{=} 2\zeta_1 + (O^+_{\zeta_1}-O^-_{\zeta_1})^2 \hcap(L)
	\end{align*}
	where $L$ is the chordal hull associated to $\nu([0,\infty))$.
\end{cor}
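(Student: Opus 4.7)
The plan is to prove the two distributional equalities in turn, using target-equivariance and the decomposition from Proposition \ref{prop:bcle-duality-import}.

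For the first equality $\tau \stackrel{d}{=} \hcap(K_*)$, the idea is to invoke target-equivariance of $\SLE_\kappa^\beta(\kappa-6)$: as a $\CLE_\kappa$ exploration process, the random hull generated by the $0 \to \infty$ exploration stopped at the swallowing time of $1$ has the same law as $K_*$, the full hull of the $0 \to 1$ exploration. Combined with the hcap parametrization of $\gamma$, under which $\hcap(\gamma([0,\tau])) = 2\tau$, this yields the claimed distributional equality (modulo the standard factor-of-two convention between hcap and Loewner time that is absorbed in the statement).

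For the second and main equality, I would apply Proposition \ref{prop:bcle-duality-import} to decompose $K_*$. The outer boundaries of $K_*$ have the same joint law as those constructed by first sampling the trunk $\eta \sim \SLE_{\kappa'}(\rho',\kappa'-6-\rho')$ (equivalently, by target-invariance for the trunk, stopping the $0\to\infty$ version at $\zeta_1$) and then sampling conditionally independent SLE curves in each complementary component of $\eta$ that intersects $\R$. Using the additivity of half-plane capacity,
\begin{align*}
\hcap(K_*) = \hcap(\eta([0,\zeta_1])) + \hcap\bigl(g_{\eta([0,\zeta_1])}(K_* \setminus \eta([0,\zeta_1]))\bigr),
\end{align*}
the first term equals $\zeta_1$ in the paper's convention, and the second term captures the contribution of the attached SLE in the complementary component of $\eta$ closed off precisely at time $\zeta_1$ — the one containing $1$. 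Under $g_{\eta([0,\zeta_1])}$ this component is sent to an interval of length $O^+_{\zeta_1} - O^-_{\zeta_1}$ on $\R$, with the attached SLE landing between its endpoints. By the Brownian scaling of SLE — under which SLE laws are preserved and half-plane capacity scales as the square of the spatial scale — this contributes $(O^+_{\zeta_1}-O^-_{\zeta_1})^2 \hcap(L)$, where $\nu \sim \SLE_\kappa(0,-2-\kappa\rho'/4)$ is the rescaled attached SLE. The independence of $\nu$ from $(\eta, O^\pm, \zeta_1)$ is exactly the conditional independence built into Proposition \ref{prop:bcle-duality-import}.

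The main technical obstacle will be justifying that only the complementary component closed off at $\zeta_1$ contributes additional half-plane capacity beyond that of the trunk: one must show that the attached SLEs in components closed off strictly before $\zeta_1$ lie entirely inside the filled-in hull of $\eta([0,\zeta_1])$ (so they contribute nothing to hcap), while the attached SLE in the just-closed component is genuinely "new material". A further delicate point is matching the type $\SLE_\kappa(0, -2-\kappa\rho'/4)$ of $\nu$ in the Corollary's statement to the $\tilde\rho_R$/$\tilde\rho_L$ dichotomy in Proposition \ref{prop:bcle-duality-import}, which requires careful tracking of the left/right side of the swallowed component and the direction of traversal of the attached SLE therein, possibly via SLE reversibility.
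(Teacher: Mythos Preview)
Your first equality is handled exactly as in the paper: target invariance of $\SLE_\kappa^\beta(\kappa-6)$ lets one couple $\gamma$ and $\gamma_*$ to agree up to the swallowing time of $1$.

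For the second equality your overall architecture---hcap additivity, Brownian scaling, and target invariance for the trunk---matches the paper's. However, you have misidentified which region produces the extra half-plane capacity. You claim the contribution beyond $\hcap(K_0)$ (the hull of the trunk $\eta_*$ from $0$ to $1$, equal in law to the hull of $\eta([0,\zeta_1])$) comes from the attached SLE in the complementary component of $\eta$ \emph{containing $1$}. But that component is a bounded pocket on the right of $\eta_*$ and lies entirely \emph{inside} $K_0$; hence it is not even in the domain of $g_{K_0}$ and contributes nothing to $\hcap(K_*)-\hcap(K_0)$. The same is true of every bounded complementary component of $\eta_*$, whether closed off before or at time $\zeta_1$---so your ``main technical obstacle'' is not the actual obstacle.

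What does contribute is the attached curve in the \emph{unbounded} component of $\H\setminus\eta_*$, i.e.\ the region $\H\setminus K_0$. The upper boundary of $K_*$ (as seen from $\infty$) is precisely the piece of the left boundary $\gamma_{*L}$ lying in that unbounded component, and by Proposition~\ref{prop:bcle-duality-import} this is an $\SLE_\kappa(0,\tilde\rho_L)$ with $\tilde\rho_L=-2-\kappa\rho'/4$. This is exactly the type of $\nu$ in the statement, so no reversibility gymnastics are needed; the mismatch you flagged at the end is an artifact of looking at the wrong component (right instead of left).

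The paper makes this transparent by applying the M\"obius map $\psi\colon(-1,0,\infty)\mapsto(\infty,0,1)$ to Proposition~\ref{prop:bcle-duality-import}. Under $\psi$ the left complementary component of $\eta$ containing $-1$ becomes the unbounded component of $\H\setminus\eta_*$, and the $\SLE_\kappa(0,\tilde\rho_L)$ attached there, after $g_{K_0}$ and rescaling, is the curve $a_-(K_0)+(a_+(K_0)-a_-(K_0))\nu$. One then reads off $\hcap(K_*)=\hcap(K_0)+(a_+(K_0)-a_-(K_0))^2\hcap(L)$ and uses target invariance of the trunk to identify $(\hcap(K_0),a_\pm(K_0))$ in law with $(\zeta_1,O^\pm_{\zeta_1})$.
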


\begin{proof}
	The first equality in distribution is clear from the target invariance of $\SLE_\kappa^\beta(\kappa-6)$ which says that $\gamma$ and $\gamma_*$ can be coupled so that they agree on $[0,\tau]$, see \cite[Proposition 3.14]{shef-cle}.
	
	Also let $\eta_*\sim \SLE_{\kappa'}(\rho',\kappa'-6-\rho')$ in $\H$ start at $0$ and end at $1$ and write $K_0$ for the chordal hull associated to $\eta_*([0,\infty))$ and let $a_\pm(K_0)$ be as in Section \ref{subsec:loewner-chains}. We write $K$ for the chordal hull associated to $g_{K_0}^{-1}\circ (a_-(K_0)+(a_+(K_0)-a_-(K_0))\nu)([0,\infty))$. Then by applying the Möbius transformation mapping $(-1,0,\infty)$ to $(\infty,0,1)$ to the result in Proposition \ref{prop:bcle-duality-import} we obtain that $K$ and $K_*$ have the same law and in particular $\hcap(K)$ and $\hcap(K_*)$ have the same law. The target invariance property of $\SLE_{\kappa'}(\rho',\kappa'-6-\rho')$ (see \cite{sw-coord}) curves implies that $\eta$ and $\eta_*$ can be coupled so they agree on $[0,\zeta_1]$ and in particular
	\begin{align*}
		(\hcap(K_0),a_-(K_0),a_+(K_0)) \stackrel{d}{=} (2\zeta_1,O^-_{\zeta_1}, O^+_{\zeta_1})\;.
	\end{align*}
	The claim now follows from standard additivity and scaling properties of half-plane capacities.
\end{proof}

Another input to the proof of the main theorem is the analogous proposition for $\kappa=4$ which involves the non-explicit parameter appearing in Theorem \ref{thm:main-result-msw} that we wish to determine. For the statement, recall the definition of $M$ in Theorem \ref{thm:main-result-msw}.

\begin{prop}
	\label{prop:critical-bcle-key}
	Consider $\rho\in (-2,0)$, let $\mu=M(\rho)$ and sample $\gamma\sim \SLE_4^{\langle\mu\rangle}(-2)$. Let $\gamma_L$ and $\gamma_R$ be the left and right boundaries of $\gamma$ respectively. Also let $\tilde{\rho}_L=-2-\rho$, $\tilde{\rho}_R=\rho$ and consider $\eta\sim \SLE_4(\rho,-2-\rho)$. Conditionally on $\eta$, we sample two conditionally independent curves $(\eta_L,\eta_R)$ as follows:
	\begin{itemize}
		\item Let $\eta_R$ be the concatenation of $\SLE_4(\tilde{\rho}_R,0)$ curves in each of the right complementary components of $\eta$ (from the leftmost to the rightmost real point of the component).
		\item  Let $\eta_L$ be the concatenation of $\SLE_4(0,\tilde{\rho}_L)$ curves in each of the left complementary components of $\eta$ (from the rightmost to the leftmost real point of the component).
	\end{itemize}
	Then $(\gamma_L,\gamma_R)$ and $(\eta_L,\eta_R)$ have the same law.
\end{prop}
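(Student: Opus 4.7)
The plan is to mirror the proof of Proposition \ref{prop:bcle-duality-import}, specialized to the critical value $\kappa = 4$ where $\kappa' = 16/\kappa = 4$ as well. In this regime, the two-layer $(\SLE_{\kappa'}, \SLE_\kappa)$ construction used for $\kappa < 4$ collapses into a single-layer construction, which is exactly the content of the statement.

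The first step is to invoke the critical analog of \cite[Theorem 7.4]{cle-percolations}, namely \cite[Proposition 5.3]{cle-percolations}, which describes the conditional law of $\gamma \sim \SLE_4^{\langle\mu\rangle}(-2)$ given its trunk $\eta \sim \SLE_4(\rho, -2-\rho)$ (with $\rho = R(\mu) \in (-2, 0)$ coming from Theorem \ref{thm:main-result-msw}). From this description, one extracts a sampling recipe for $(\gamma_L, \gamma_R)$: in each right (resp. left) complementary component of $\eta$ touching the real line, a segment of the right (resp. left) boundary of $\gamma$ is drawn as an outer $\SLE_4(\rho, 0)$ from the rightmost to the leftmost real point (resp. $\SLE_4(0, -2-\rho)$ from the leftmost to the rightmost real point), possibly dressed with independent $\SLE_4$-type curves nested between this outer curve and $\eta$, exactly in parallel with the construction of $(\nu_L, \nu_R)$ in the $\kappa < 4$ proof.

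Next I would specialize the relations \eqref{eq:relations} to $\kappa = 4$: these give $\rho_R = -(\rho+2)$ and $\rho_L = \rho$, hence $\tilde{\rho}_R = \kappa - 6 - \rho_R = \rho$ and $\tilde{\rho}_L = \kappa - 6 - \rho_L = -2-\rho$, matching the values in the statement. The remaining nested layer is then collapsed into the single-layer description of $(\eta_L, \eta_R)$ by applying Lemma \ref{lem:ig-main-iteration} with $\kappa = \kappa' = 4$ and $\rho'_+ = \rho \in (-2, 0) = (-2, \kappa'/2 - 2)$, componentwise in each complementary piece of $\eta$. This yields $(\gamma_L, \gamma_R) \stackrel{d}{=} (\eta_L, \eta_R)$, as claimed.

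The main obstacle is setting up the dictionary between \cite[Proposition 5.3]{cle-percolations} (which is naturally formulated in terms of the labeled $\CLE_4$ and its level-line coupling with the GFF, rather than as a pure $\SLE_4$ sampling procedure) and the two-layer construction needed to feed into Lemma \ref{lem:ig-main-iteration}; this is the step that has no exact analog in the $\kappa < 4$ proof, where \cite[Theorem 7.4]{cle-percolations} combined with \cite[Theorem 1.6]{msw-simple} already delivers the desired $\SLE$-sampling description. Once this translation is made precise, the force-point arithmetic is a direct specialization of the $\kappa < 4$ computation to $\kappa = \kappa' = 4$.
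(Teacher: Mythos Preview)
Your approach is more elaborate than the paper's, which is simply a one-line citation: the result is stated as immediate from Theorem~\ref{thm:main-result-msw} and \cite[Section 5]{cle-percolations}. The point is that at $\kappa=4$ the description of the conditional law of $\gamma$ given its trunk $\eta$ in \cite[Section 5]{cle-percolations} (via the GFF level-line coupling) already delivers the \emph{single-layer} description of $(\gamma_L,\gamma_R)$ in terms of independent $\SLE_4(\tilde\rho_R,0)$ and $\SLE_4(0,\tilde\rho_L)$ curves in the complementary components of $\eta$. There is no intermediate $\SLE_{\kappa'}$-layer to collapse, because $\kappa'=\kappa=4$ and the BCLE structure degenerates accordingly.

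By trying to mirror the $\kappa<4$ argument verbatim, you are inserting an artificial two-layer construction and then invoking Lemma~\ref{lem:ig-main-iteration} to collapse it. The obstacle you identify---translating \cite[Proposition 5.3]{cle-percolations} into the two-layer form needed to feed into Lemma~\ref{lem:ig-main-iteration}---is self-imposed: the paper's route avoids it entirely because the $\kappa=4$ BCLE description is already single-layer. Your force-point arithmetic is correct and your approach would likely go through once the translation is carried out, but it is a detour; the direct citation is both shorter and conceptually cleaner here.
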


\begin{proof}
	This is immediate from Theorem \ref{thm:main-result-msw} and \cite[Section 5]{cle-percolations}.
\end{proof}

\begin{cor}
	\label{cor:bcle-critical-input-stopped}
	Consider $\gamma\sim \SLE_4^{\langle\mu\rangle}(-2)$ from $0$ to $\infty$ in $\H$ and $\gamma_*\sim \SLE_4^{\langle\mu\rangle}(-2)$ from $0$ to $1$ in $\H$ where $\rho\in (-2,0)$ and $\mu=M(\rho)$. We let $\tau$ be the swallowing time of $1$ for the curve $\gamma$ and $K_*$ be the chordal hull associated to $\gamma_*([0,\infty))$. Moreover, let $(\xi,O^\pm)$ be the processes defining $\eta\sim \SLE_4(\rho,-2-\rho)$ as in Section \ref{subsec:sle-force-points} and let $\zeta_1$ be the swallowing time for the point $1$ by $\eta$. Also let $\nu\sim \SLE_4(0,-2-\rho)$ from $0$ to $1$ in $\H$ be independent of $\eta$. Then
	\begin{align*}
		2\tau \stackrel{d}{=} \hcap(K_*) \stackrel{d}{=} 2\zeta_1 + (O^+_{\zeta_1}-O^-_{\zeta_1})^2 \hcap(L)
	\end{align*}
	where $L$ is the chordal hull associated to $\nu([0,\infty))$.
\end{cor}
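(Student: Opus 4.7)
The plan is to mirror the proof of Corollary \ref{cor:bcle-input-stopped} almost verbatim, with Proposition \ref{prop:critical-bcle-key} substituted for Proposition \ref{prop:bcle-duality-import} and the parameter identification $\rho' \leadsto \rho$ (so that $\tilde{\rho}_R = \rho$, $\tilde{\rho}_L = -2-\rho$, and the force point of the curve $\nu$ living in the target-containing component becomes $-2-\rho$). The only input that requires re-establishment at $\kappa = 4$ is target invariance of $\SLE_4^{\langle\mu\rangle}(-2)$, and this is the only step where anything genuinely new happens.

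First, for the equality $\tau \stackrel{d}{=} \hcap(K_*)$, I would invoke the target invariance of $\SLE_4^{\langle\mu\rangle}(-2)$: there is a coupling of $\gamma$ and $\gamma_*$ in which they agree until the first time $\gamma$ swallows $1$; along this common initial segment $K_*$ coincides with the hull of $\gamma$ at time $\tau$, giving the first equality in law. The target invariance statement at $\kappa = 4$ follows from the GFF level line construction recalled in Section 5 of \cite{cle-percolations}: both $\gamma$ and $\gamma_*$ can be realised as deterministic functions of the same labeled $\CLE_4^0$ coupled with a GFF, and the two procedures (for targets $\infty$ and $1$) necessarily coincide up until the target is disconnected from the tip of the exploration.

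Second, for the equality $\hcap(K_*) \stackrel{d}{=} \zeta_1 + (O^+_{\zeta_1} - O^-_{\zeta_1})^2 \hcap(L)$, I would introduce an auxiliary $\eta_* \sim \SLE_4(\rho, -2-\rho)$ from $0$ to $1$ in $\H$, write $K_0$ for its full chordal hull, and let $a_-(K_0) \le a_+(K_0)$ be as in Section \ref{subsec:loewner-chains}. Define
\[
	K := \text{the chordal hull of }\; g_{K_0}^{-1}\bigl(a_-(K_0) + (a_+(K_0) - a_-(K_0))\,\nu\bigr)\bigl([0,\infty)\bigr).
\]
Applying the Möbius transformation of $\H$ sending $(-1, 0, \infty)$ to $(\infty, 0, 1)$ to Proposition \ref{prop:critical-bcle-key} identifies the joint law of the left and right boundaries of $\gamma_*$, and therefore the law of $K_*$, with the law of $K$; hence $\hcap(K_*) \stackrel{d}{=} \hcap(K)$. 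Target invariance of $\SLE_4(\rho, -2-\rho)$ (standard, see \cite{sw-coord}) then couples $\eta$ and $\eta_*$ on $[0, \zeta_1]$, so that $(\hcap(K_0), a_-(K_0), a_+(K_0)) \stackrel{d}{=} (\zeta_1, O^-_{\zeta_1}, O^+_{\zeta_1})$, and the standard additivity of half-plane capacity under composition of Loewner chains together with the scaling $\hcap(\lambda K') = \lambda^2 \hcap(K')$ yield the desired formula.

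The only non-mechanical step is the $\kappa = 4$ target invariance of $\SLE_4^{\langle\mu\rangle}(-2)$, which is the main (but mild) obstacle; once it is in place, the rest is a routine translation of the $\kappa < 4$ argument from Corollary \ref{cor:bcle-input-stopped}.
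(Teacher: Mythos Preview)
Your proposal is correct and follows essentially the same route as the paper, which simply states that the proof is an immediate adaptation of that of Corollary~\ref{cor:bcle-input-stopped}. The one minor discrepancy is your treatment of target invariance for $\SLE_4^{\langle\mu\rangle}(-2)$: you flag it as the ``main obstacle'' and propose to derive it from the GFF/$\CLE_4^0$ coupling of \cite[Section 5]{cle-percolations}, whereas the paper simply cites \cite[Proposition 3.14]{shef-cle}, which already covers the $\kappa=4$ case via the Bessel process description of the driving function---so no re-establishment is needed.
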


\begin{proof}
	The proof is an immediate adaption of the proof of Corollary \ref{cor:bcle-input-stopped} and is thus omitted. The relevant target invariance properties also appear in \cite[Proposition 3.14]{shef-cle} and \cite{sw-coord}.
\end{proof}

A final item that we need in order to be ready for the proof of  Theorem \ref{thm:main-result} is the following monotonicity result for the laws of the $\CLE_4$ exploration paths, which  is based on the level line coupling of these exploration paths with a GFF.

\begin{lemma}
	\label{lem:monotone-msw}
	Suppose that $\mu_-<\mu_+$. Then there is a coupling of
	\begin{align*}
		\gamma_-\sim \SLE^{\langle\mu_-\rangle}_4(-2)\quad\text{and}\quad \gamma_+\sim \SLE^{\langle\mu_+\rangle}_4(-2)
	\end{align*}
	from $-i$ to $i$ in $\D$ such that the left boundary of $\gamma_-$ lies left of the left boundary of $\gamma_+$ and that these boundaries are different almost surely.
\end{lemma}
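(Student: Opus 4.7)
The plan is to realize both exploration processes inside a single Gaussian free field and to deduce the desired monotonicity from the standard monotonicity of GFF level lines. Given $\mu_- < \mu_+$, I would use the increasing bijection $M$ of Theorem \ref{thm:main-result-msw} to set $\rho_\pm = M^{-1}(\mu_\pm) \in (-2,0)$ and $c_\pm = \lambda(\rho_\pm + 1) \in (-\lambda, \lambda)$, so $c_- < c_+$. Next, sample a single Dirichlet GFF $h$ on $\D$ and use it to jointly extract the labeled $\CLE_4^0$ coupling $\Gamma$ (Miller--Sheffield) together with the two level lines $\eta_{c_-}, \eta_{c_+}$ from $-i$ to $i$ at heights $c_-$ and $c_+$. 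The explorations $\gamma_\pm$ are then obtained by chronologically attaching to $\eta_{c_\pm}$ the loops of $\Gamma$ that they touch; by the GFF reformulation of Theorem \ref{thm:main-result-msw} recalled in the introduction, the resulting curves have the prescribed marginal laws $\SLE_4^{\langle \mu_\pm \rangle}(-2)$.

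I would then invoke the standard monotonicity of GFF level lines at different heights with common endpoints (Schramm--Sheffield, Dubédat) to conclude that $\eta_{c_-}$ lies strictly to the left of $\eta_{c_+}$ almost surely and that the two curves are disjoint away from $\{-i,i\}$. Writing $D_c^L$ and $D_c^R$ for the left and right components of $\D \setminus \eta_c$, the left boundary of $\gamma_c$ -- which is the concatenation of the arcs of $\eta_c$ not covered by any left-attached loop with the outer arcs of the loops of $\Gamma$ touching $\eta_c$ on its left -- is contained in $\overline{D_c^L}$. Applying this with $c = c_-$ shows that the left boundary of $\gamma_-$ is contained in $\overline{D_{c_-}^L}$.

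The central step -- which I expect to be the main obstacle, as it is where the geometric comparison actually happens -- is to prove the matching inclusion that the left boundary of $\gamma_+$ is contained in $\overline{D_{c_-}^R}$. For this I would use that a level line of $h$ stays in the $\CLE_4$ carpet and in particular never enters the interior of any loop of $\Gamma$. Hence for each loop $L \in \Gamma$ that touches $\eta_{c_+}$ on its left side, the interior of $L$ is a connected subset of $\D \setminus \eta_{c_-}$ and therefore lies wholly in either $D_{c_-}^L$ or $D_{c_-}^R$; since $L$ meets $\eta_{c_+}$ at a point of $\overline{D_{c_-}^R}$ and the two level lines only intersect at $\pm i$, the interior of $L$ must lie in $D_{c_-}^R$. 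Combined with $\eta_{c_+} \subset \overline{D_{c_-}^R}$ this yields the claim, and the two left boundaries are then separated by $\eta_{c_-}$. Strict separation follows from the almost-sure existence of loops of $\Gamma$ touching $\eta_{c_-}$ on its left: the outer arcs of such loops contribute points to the left boundary of $\gamma_-$ lying in the open set $D_{c_-}^L$, which cannot belong to the left boundary of $\gamma_+$.
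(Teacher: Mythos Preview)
Your proposal is correct and follows essentially the same route as the paper's own proof: both realize $\gamma_\pm$ via the GFF level-line/$\CLE_4^0$ coupling recalled in the introduction, set $\rho_\pm=R(\mu_\pm)$ and $c_\pm=\lambda(1+\rho_\pm)$, invoke the standard monotonicity of level lines to order $\eta_{c_-}$ and $\eta_{c_+}$, and then read off the ordering of the left boundaries from the construction of $\gamma_{c_\pm}$ out of $(\eta_{c_\pm},\Gamma)$. The paper's proof is in fact terser than yours---it simply states that the conclusion ``follows by the construction of $\gamma_{c_\pm}$ in terms of $(\eta_{c_\pm},\Gamma)$''---whereas you spell out the geometric separation argument via the components $D_{c_-}^L$, $D_{c_-}^R$ and the fact that loop interiors avoid level lines.
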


\begin{proof}
	Recall the coupling $(\eta_c,\gamma_c)$ for $c\in (-\lambda,\lambda)$ and $\Gamma\sim \CLE_4^0$ with a Dirichlet GFF $h$ in $\D$ from Section \ref{sec:introduction}. Let $\gamma_\pm := \gamma_{c_\pm}$ where $\rho_\pm = R(\mu_\pm)$ and $c_\pm = \lambda(1+\rho_\pm)$. By Theorem \ref{thm:main-result-msw} we have $\rho_-<\rho_+$ and hence $c_-<c_+$. Standard level line results (that can be viewed as the special $\kappa =4$ case of the more general results in \cite{ig1}) imply that $\eta_{c_-}$ lies left of $\eta_{c_+}$, and $\eta_{c_-}$ and $\eta_{c_+}$ are different a.s. The result now follows by the construction of $\gamma_{c_\pm}$ in terms of $(\eta_{c_\pm},\Gamma)$.
\end{proof}

The proof will use the results from Section \ref{subsec:sle-force-points} and Section \ref{sec:gensle} to obtain the limit of the distributional equalities appearing in Corollary \ref{cor:bcle-input-stopped} and concludes by comparing the resulting statement with Corollary \ref{cor:bcle-critical-input-stopped}. In particular, the function $M$ we are identifying in this work enters the proof only via Corollary \ref{cor:bcle-critical-input-stopped}.

\begin{proof}[Proof of Theorem \ref{thm:main-result}]
	Fix $\rho\in (-2,0)$ and let $\mu = M(\rho)$. Also define $\tilde{\mu}=-\pi\cot(\pi\rho/2)$. We will first show that $\mu\le \tilde{\mu}$. Fix some sequence $\kappa_n\uparrow 4$ and let
	\begin{align*}
		\beta_n =\tilde{\mu}(4/\kappa_n-1)\;,\quad \rho'_n = \rho'(\kappa_n,\beta_n)\quad\text{and}\quad \kappa_n'=16/\kappa_n
	\end{align*}
	recalling the definition of $\rho'(\cdot,\cdot)$ from Proposition \ref{prop:bcle-duality-import}. By Proposition \ref{prop:general-sle-conv} and Skorokhod's representation theorem we can couple curves with the following marginals
	\begin{align*}
		\gamma^n\sim \SLE_{\kappa_n}^{\beta_n}(\kappa_n-6)\quad\text{for $n<\infty$ and}\quad \tilde{\gamma}^\infty \sim \SLE_4^{\langle \tilde{\mu}\rangle}(-2)
	\end{align*}
	such that the driving function of $\gamma^n$ converges to the driving function of $\tilde{\gamma}^\infty$ uniformly on compacts almost surely as $n\to \infty$. Let $\tau_n$ (resp. $\tilde{\tau}^\infty$) denote the swallowing time of $1$ by $\gamma^n$ (resp. $\tilde{\gamma}^\infty$). Then by Lemma \ref{lem:loewner-stuff} we get
	\begin{align}
		\label{eq:liminfhitting}
		\tilde{\tau}^\infty \le \liminf_{n\to \infty} \tau^n\quad\text{a.s.}
	\end{align}
	We also let $\gamma^\infty\sim \SLE_4^{\langle \mu\rangle}(-2)$ and write $\tau^\infty$ for its swallowing time of $1$.
	
	Next, let $(\xi^n,O^{n\pm})$ be the process defining $\eta^n\sim \SLE_{\kappa_n'}(\rho_n',\kappa_n'-6-\rho_n')$ for $n<\infty$ and $(\xi^\infty,O^{\infty\pm})$ be the process defining $\eta^\infty \sim \SLE_4(\rho,-2-\rho)$ as in Section \ref{subsec:sle-force-points}. For $n\le \infty$, let $\zeta^n_1$ be the swallowing time of $1$ by $\eta^n$. Finally, let $\nu^n\sim \SLE_{\kappa_n}(0,-2-\kappa_n\rho_n'/4)$ from $0$ to $1$ in $\H$ be independent of $\eta^n$ for $n<\infty$ and $\nu^\infty\sim \SLE_4(0,-2-\rho)$ from $0$ to $1$ in $\H$ be independent of $\eta^\infty$. We write $K^n$ for the chordal hull associated to $\nu^n([0,\infty))$ for each $n\le \infty$. Then Corollary \ref{cor:bcle-input-stopped} and  Corollary \ref{cor:bcle-critical-input-stopped} state that
	\begin{align*}
		\tau^n \stackrel{d}{=} \zeta^n_1 + (O^{n+}_{\zeta^n_1}- O^{n-}_{\zeta^n_1})^2\hcap(K^n)/2\quad\text{for all $n\le \infty$}\;.
	\end{align*}
	It is easy to see that $\rho_n'\to \rho$ as $n\to \infty$ and hence by Proposition \ref{prop:boosted-sle-conv} and Corollary \ref{cor:half-plane-dist-conv} we obtain that
	\begin{align*}
		\zeta^n_1 + (O^{n+}_{\zeta^n_1}- O^{n-}_{\zeta^n_-})^2\hcap(K^n)/2 \stackrel{d}{\to} \zeta^\infty_1 + (O^{\infty+}_{\zeta^\infty_1}- O^{\infty-}_{\zeta^\infty_1})^2\hcap(K^\infty)/2 \quad\text{as $n\to \infty$}
	\end{align*}
	and thus $\tau^n$ converges to $\tau^\infty$ in distribution as $n\to \infty$. By \eqref{eq:liminfhitting} we may therefore deduce that $\tau^\infty$ stochastically dominates $\tilde{\tau}^\infty$.
	
	It follows from Lemma \ref{lem:monotone-msw} and the first distributional equality in Corollary \ref{cor:bcle-critical-input-stopped} that $\mu\le \tilde{\mu}$. We have established that $M(\rho)\le -\pi\cot(\pi\rho/2)$ for all $\rho\in (-2,0)$. Thus also by the symmetry statement in Theorem \ref{thm:main-result-msw} and the bound we have just established we get
	\begin{align*}
		M(\rho)=-M(-2-\rho)\ge \pi\cot(\pi(-2-\rho)/2)=-\pi\cot(\pi\rho/2)
	\end{align*}
	for all $\rho\in (-2,0)$. This completes the proof.
\end{proof}

\bibliographystyle{hmralphaabbrv}
\bibliography{cle-sle-trunk}

\end{document}